%% file: ndpg.tex
\pdfoutput=1
\documentclass[12pt, oneside, reqno]{amsart}

\title[Nonlinear residual loss functions for neural networks]{
  Variationally correct loss functions for nonlinear
  parametric problems and their neural approximations}

\author{Pablo Cort{\'e}s Castillo}
\address{Portland State University, PO Box 751, Portland OR 97201}
\email{pcortes@pdx.edu}

\author{Wolfgang Dahmen}
\address{University of South Carolina, 1523 Greene Street, 29208 Columbia, South
  Carolina}
\email{dahmen@math.sc.edu}

\author{Jay Gopalakrishnan}
\address{Portland State University, PO Box 751, Portland OR 97201}
\email{gjay@pdx.edu}

\subjclass[2020]{65N30, 65N15, 35J62, 68T07}
\keywords{nonlinear parametric PDEs, quasilinear fluxes, neural networks, Kirchhoff model, a posteriori estimator, DPG, FOSLS}

\input{preamble}
\makeatletter
\renewcommand{\@captionfont}{\normalfont\footnotesize}
\makeatother
\renewcommand{\fp}{\varPi}

\begin{document}
\begin{abstract}
  We construct computable {\em a posteriori} error estimators for
  parametric families of nonlinear elliptic problems by measuring the
  residual of a variational formulation in the dual norm of a closed
  subspace of the test space.  A single abstract theorem, requiring
  G\^ateaux differentiability of the nonlinear operator, a
  uniform bound on its derivative, a local nonlinear inf-sup
  condition, and a Fortin compatibility condition, shows that the
  resulting estimator is bounded above and below by the error in the
  trial norm, i.e., that it is a variationally correct loss function.
  Two instances are covered. The first
  requires the test space to be $L_2$-like, yielding a nonlinear
  first-order system least-squares loss function whose two-sided bound needs no
  mesh and therefore certifies arbitrary, possibly meshless,
  approximations.  The second breaks the test space, so that the dual
  norm is computed by independent element-local problems, yielding a
  nonlinear discontinuous Petrov--Galerkin loss function. All assumptions are verified in detail for several classes of examples. A class of
  quasilinear diffusion problems covers both a monotone
  nonlinear flux model and a semilinear problem whose nonlinearity
  sits in lower-order terms.  A nonlocal Kirchhoff model offers a
  qualitatively different example with stronger differentiability properties.
  Numerical studies train neural network approximations of the
  parameter-to-solution maps with the discontinuous Petrov--Galerkin
  losses and measure the ratio of loss to squared error.
\end{abstract}
\maketitle

\section{Introduction}
\label{sec:intro}
The twin advances made in this paper concern (i) new {\em a
  posteriori} error estimators for parametric families of nonlinear
elliptic problems, obtained by measuring a residual in a dual norm,
and (ii) variationally correct loss functions for neural network (NN)
approximations of the associated parameter-to-solution maps.  The two
are one and the same: the global error estimators are variationally correct loss functions, so we shall use the two terms interchangeably. The difference lies in
applications. In automatic adaptivity, one localizes the error estimators and uses them in an adaptive loop (consisting of the standard solve, estimate, mark, and refine steps). In
machine learning applications, one uses the loss function both for training and as an error certificate for NN predictions.
In this paper we are driven more by the modern NN applications, but the new theory we develop applies to both.

\subsection{Learning parameter-to-solution maps}

A common approach to modeling uncertain physical processes is through
systems of {\em parameter-dependent} partial differential equations
(PDEs).  Inference on system states, such as calibration of uncertain
coefficients or identification of  constitutive laws from observational data,
is greatly facilitated by having an approximation to the {\em
  parameter-to-solution map} in hand.  This motivates the vibrant area
of {\em operator learning}.
We shall see many examples later where a parametric PDE is reformulated into a nonlinear operator equation
$\Aa(\clu) = \ell$ on appropriate spaces,
where $\alpha$ is a high-dimensional parameter varying in some compact set $\cl Q$ and $\Aa$ involves partial derivatives with respect to the components of a spatial variable $x$ in a low-dimensional spatial domain $\om$. The map $\alpha \mapsto \clu$ is what we refer to as the parameter-to-solution map.
Since the parameter domain is typically
high- or even infinite-dimensional, operator learning extends established methods for solving a
single PDE  to learn the entire map.

Two paradigms currently dominate operator learning.  The first, purely
{\em data-driven}, requires computing high-fidelity solutions for a
large number of parameter samples and then applying regression in
function space over a suitable hypothesis class of operator
representations, see, e.g.,~\cite{Stu1,Stu2}.  The second---{\em
  residual-based}, or {\em physics-informed}---avoids high-fidelity
solution computations altogether by minimizing a residual loss over
the hypothesis class, see, e.g.,~\cite{BDO,CDG,Kar2,QCD,Kar1}.  The
latter approach aligns with this paper, and its computational
advantages are most pronounced when the PDE is {\em nonlinear}, since
each high-fidelity solve is then particularly expensive.  On the other
hand, hypothesis classes best suited to high-dimensional parameter
domains are themselves nonlinear, rendering the corresponding
optimization non-convex.  Consequently, it is generally uncertain
whether optimization can fully exploit the approximation power of the
hypothesis class, and the only available accuracy assessment at
termination is the loss itself.

\subsection{Variational correctness}
\label{ssec:vc-definition}

Since the error cannot be computed and the loss is all one has at termination, it is worth stating
precisely what one wants of it.  Fix a parameter $\alpha$, let
$u(\alpha)$ denote the exact solution of the PDE at that parameter,
let $\| \cdot \|$ denote a norm in which one wishes to measure the
error---a {\em model-compliant} norm, such as an energy norm for a
diffusion problem---and let $\cl H$ denote a hypothesis class of trial
functions over which the loss is minimized.  We call a loss function
$w \mapsto \cl L_\alpha( w)$ {\em variationally correct} \cite{BDO, CDG} if there are
constants $0 < c \le C < \infty$, independent of $\alpha$ and of any
discretization parameter, such that
\begin{equation}
  \label{ideally}
  c\, \| u(\alpha) - w \|^2
  \;\le\; \cl L_\alpha( w)
  \;\le\; C\, \| u(\alpha) - w \|^2
  \qquad \text{ for all } w \in \cl H .
\end{equation}
The value of the loss is then a certificate of accuracy: it is small
if and only if the error is small.  Two features
of~\eqref{ideally} deserve emphasis.  First, it is required to hold at {\em
  every} member of~$\cl H$, not merely at a minimizer, because during
training the network output is an arbitrary member of the class and
satisfies no Galerkin or Euler--Lagrange relation.  Second,  its constants
must not degrade as the discretization is refined or as $\alpha$
ranges over the parameter set, since otherwise the certificate says
nothing uniform.

A fundamental difficulty is that in infinite dimensions a small
residual, measured in a convenient norm, need not imply a small error
in a model-compliant norm.  For {\em linear} PDEs this has led to the
formulation of variationally correct residual
losses~\cite{BDO,CDG,QCD}, resting on the clean conceptual platform of
Babu\v{s}ka's theorem, which supplies stable
variational formulations and hence~\eqref{ideally}.  For {\em
  nonlinear} problems, which are the focus of this paper, matters are
substantially harder.  Existence and uniqueness of solutions may
require strong structural hypotheses (monotonicity, coercivity), and
bifurcations can give rise to multiple solution branches.  When
stability holds, it is typically only {\em local}, i.e., valid only in
a neighborhood of a {\em regular solution}, one at which the derivative of
the operator is well behaved; then~\eqref{ideally} can be expected
only for $w$ near $u(\alpha)$.  Beyond stability, a challenge
inherited from the linear case remains: variationally correct residual
losses involve {\em dual norms}, which are generally intractable to
evaluate exactly.

\subsection{Which losses are variationally correct?}
\label{sec:vc-verification}

Before describing what we do, we survey the losses in common use and
ask which of them satisfy~\eqref{ideally}.
We shall use  the standard
spaces on the low-dimensional spatial domain, such as
$H^1 = \{ w \in L_2(\om): \partial_i w \in L_2(\om)\}$,
its dual $H^{-1}$,  and
$\Hdiv = \{ r \in L_2(\om)^d: \dive r \in L_2(\om)\}$.
For any space $X$ and its dual $X'$, the action of a functional $f \in X'$ on $v \in X$ is written either as $f(v)$ or as $\ip{f, v}_X$, where the subscript is dropped when no confusion can arise.
We also omit $\om$ from the notation for spaces if no confusion can arise.
To keep the discussion
concrete we use one model problem: on a bounded domain
$\om \subset \bb R^d$, given $f \in L_2(\om)$ and a
parameter-dependent flux function $\sigma_\alpha : \bb R^d \to \bb
R^d$, find $u$ with
\begin{equation}
  \label{eq:intro-model}
  \dive (\sigma_\alpha( \nabla u)) = f \;\text{ in } \om,
  \qquad
  u = 0 \;\text{ on } \partial \om,
\end{equation}
i.e., $u \in \Ho^1$ with
$-(\sigma_\alpha(\nabla u), \nabla v)_\om = (f, v)_\om$ for all
$v \in \Ho^1$, where $(\cdot,\cdot)_\om$ is the $L_2(\om)$ inner
product and $\Ho^1$ is the Sobolev space of $L_2$ functions
with $L_2$ gradient and vanishing trace, normed by the
model-compliant norm $u \mapsto
\| \nabla u \|_{L_2}$.
We assume that $\sigma_\alpha$ satisfies the hypotheses
in \S\ref{ssec:nonlinear-diffus-flux-model}
(strong monotonicity and Lipschitz  continuity)  given later.
Under those assumptions~\eqref{eq:intro-model}
has a unique solution $u = u(\alpha)$. The
model-compliant norm is $\| \nabla ( u - w) \|_{L_2}$.

\medskip
\noindent{\em The strong residual.}  The strong-form loss
associated with~\eqref{eq:intro-model}, ubiquitous in collocation
PINNs~\cite{Kar1}, is
$\| f - \dive(\sigma_\alpha(\nabla w)) \|^2_{L_2}$.  It measures
a second-order residual in $L_2$, whereas the natural residual of
the weak problem lies in $H^{-1}$. No mesh-independent
equivalence can repair the mismatch.  Even for the
one-dimensional linear prototype, $u''=0=f$ on $\om = (0,\pi)$,
with $\sigma_\alpha(p) = -p$ in \eqref{eq:intro-model}, whose exact
solution is $u = 0$, for candidates $w_n(x) = n^{-2}\sin(nx)$, $n \ge 1$, the loss equals
$\| w_n'' \|^2_{L_2} = \pi/2$ for every $n$, whereas the
error $\|\nabla(u - w_n)\|^2_{L_2} = \|w_n'\|^2_{L_2} = \pi/(2n^2)$
tends to zero as $n \to \infty$.  The upper bound of~\eqref{ideally} therefore fails.

\medskip
\noindent{\em The energy functional.}  Suppose the flux derives from
an energy density, i.e., $\sigma_\alpha(p) = -\varphi_\alpha'(|p|)\,
p/|p|$ for a convex profile $\varphi_\alpha$.  The functional whose
Euler--Lagrange equation is the weak form of~\eqref{eq:intro-model} is
then
\begin{equation}
  \label{eq:energy-functional-num}
  \cl J_\alpha(w)
  := \int_\om \varphi_\alpha(|\nabla w|)\,dx - (f,w)_\om ,
\end{equation}
a valid training objective of Deep Ritz type~\cite{DeepRitz} whose
unique minimizer is $u$.  Indeed, the strong monotonicity and
Lipschitz continuity assumed in~\S\ref{ssec:nonlinear-diffus-flux-model} imply
\begin{equation}
  \label{eq:energy-gap-equivalence}
  \frac{\mu_1}{2}\, \|\nabla(u-w)\|_{L_2}^2
  \;\le\; \cl J_\alpha(w)-\cl J_\alpha(u)
  \;\le\; \frac{\mu_2}{2}\, \|\nabla(u-w)\|_{L_2}^2,
\end{equation}
so the energy {\em gap} is variationally correct.  It is not, however,
a computable certificate, because the offset $\cl J_\alpha(u)$ is
unknown: the raw energy can be minimized without knowing the offset,
but its magnitude cannot be converted into an error estimate or a
stopping tolerance, and replacing it by $|\cl J_\alpha(w)|$ neither
repairs this nor preserves the minimizer.  The energy method is thus a
legitimate optimization baseline, but its raw value is not a loss
satisfying~\eqref{ideally}.

\medskip
\noindent{\em The natural residual and its localization.}  For
$w \in \Ho^1$ the residual of the weak form,
\begin{equation}
  \label{eq:conforming-residual}
  \ip{\operatorname{Res}_\alpha(w), v}
  := (f,v)_\om + (\sigma_\alpha(\nabla w), \nabla v)_\om,
  \qquad v \in \Ho^1,
\end{equation}
then satisfies
\begin{equation}
  \label{eq:conforming-residual-equivalence}
  \frac{1}{\mu_2}\, \|\operatorname{Res}_\alpha(w)\|_{H^{-1}}
  \;\le\; \|\nabla(u-w)\|_{L_2}
  \;\le\; \frac{1}{\mu_1}\, \|\operatorname{Res}_\alpha(w)\|_{H^{-1}},
\end{equation}
by the techniques in \S\ref{ssec:nonlinear-diffus-flux-model} with  the
$\mu_i$ given there (use  the fundamental theorem of calculus along the segment
from $w$ to $u$ together with the flux bounds established there).
The continuous $H^{-1}$ residual is therefore variationally
correct---but its exact evaluation is an infinite-dimensional,
globally coupled Riesz problem, so it is not computable.  The standard computable
surrogate is the residual estimator: with $X_h \subset \Ho^1$ a
conforming finite element space on a shape-regular simplicial mesh
$\oh$ of $\om$, whose elements $K$ have diameters $h_K$ and whose
interior facets $F$ have diameters $h_F$ and unit normals $n_F$, put
$R_K(w_h) = f - \dive(\sigma_\alpha(\nabla w_h))|_K$ for the
elementwise residual and
$J_F(w_h) = [\![\sigma_\alpha(\nabla w_h)]\!]\cdot n_F$ for the jump
of the flux across $F$, and set
\begin{equation}
  \label{eq:conforming-local-estimator}
  \cl S_\alpha(w_h)^2
  := \sum_{K\in\oh} h_K^2 \|R_K(w_h)\|_{L_2(K)}^2
  + 2\sum_{F} h_F \|J_F(w_h)\|_{L_2(F)}^2 .
\end{equation}
For the Galerkin solution the usual localization argument yields
reliability and efficiency of $\cl S_\alpha$ (see,
e.g.,~\cite[Chap.~34]{ErnGuermondFEII} and, for nonlinear elliptic
problems, \cite{Verfurth1994}) but not for {\em all elements} of $X_h$. Since
$\cl S_\alpha$ is not reliable on the whole $X_h$,  it fails~\eqref{ideally}: e.g., on a
quasi-uniform mesh with $f \ne 0$, the admissible output $w_h = 0$
gives $\cl S_\alpha(0)^2 = \sum_K h_K^2 \|f\|^2_{L_2(K)} \le h^2
\|f\|^2_{L_2}$, which tends to zero with $h$, while the error
$\|\nabla u\|_{L_2}$ stays fixed.  This is exactly the failure
that disqualifies classical residual estimators as losses over a
neural network trial class.

\medskip
\noindent{\em A variationally correct conforming loss.}  What is
missing above is the part of the residual visible on the discrete test
space, i.e., the violation of the discrete Galerkin equations.  Adding
it back as an algebraic term
\begin{equation}
  \label{eq:algebraic-residual}
  \eta_{\mathrm{alg},\alpha}(w_h)
  := \sup_{0 \ne v_h \in X_h}
  \frac{|\ip{\operatorname{Res}_\alpha(w_h), v_h}|}{\|\nabla v_h\|_{L_2}},
\end{equation}
one obtains
\begin{equation}
  \label{eq:conforming-vc-loss}
  \cl L^{\mathrm{conf}}_{\alpha}(w_h)
  := \cl S_\alpha(w_h)^2 + \eta_{\mathrm{alg},\alpha}(w_h)^2,
\end{equation}
which does satisfy~\eqref{ideally} on all of $X_h$, up to the usual
data oscillation term.
This is
the analogue, in the present setting, of residual estimators for
violated Galerkin orthogonality~\cite{Angermann2002}.  Evaluating
$\eta_{\mathrm{alg},\alpha}$ amounts to one stiffness matrix solve.
Thus
$\cl L^{\mathrm{conf}}_{\alpha}(w_h)$ is indeed a  computable
variationally correct loss function, but
at the price of a global solve per parameter sample.

\medskip
\noindent{\em DPG and FOSLS losses.}  The two types of variationally correct losses
developed in this paper avoid that global solve.  The FOSLS-type losses are obtained by rewriting
the problem as a first-order system and measuring its residual in
$L_2$: with $q$ denoting the flux of the solution,
\begin{equation}
  \label{eq:nonlinear-fosls-loss}
  \cl L^{\mathrm{fos}}_{\alpha}(q_h,w_h)
  := \|q_h - \sigma_\alpha(\nabla w_h)\|_{L_2}^2
  + \|\dive q_h - f\|_{L_2}^2 .
\end{equation}
Strong monotonicity, integration by parts and the Friedrichs
inequality show that~\eqref{eq:nonlinear-fosls-loss}
satisfies~\eqref{ideally} on $\Hdiv \times \Ho^1$ with the graph
norm, where $\Hdiv$ collects the $L_2^d$ vector fields whose
divergence is in $L_2$. No Riesz solve of any kind is
needed, since $L_2$ is
its own dual; this is the {\em nonlinear counterpart of the standard FOSLS}
baseline~\cite{FOSLS_coercivity,BDO,CDG}.  The alternate approach keeps a weaker
trial norm and instead breaks the test space into element-local
pieces, so that the dual norm decouples into independent element-by-element
computations; this is the DPG methodology.  The two are developed,
under the names NFOSLS and NDPG, in the rest of the paper; the
functional~\eqref{eq:nonlinear-fosls-loss} reappears in
\S\ref{ssec:nfosls-diffusion} as an instance of the general framework.

These families control different---but in each case
model-compliant---norms, so their raw values are not comparable across
formulations.  The conforming loss controls the $\Ho^1$ energy
error and needs one global Riesz solve per sample.  The first-order
system loss controls an $\Hdiv \times \Ho^1$ graph error and asks
the trial class for a conforming flux variable, but evaluates by
quadrature alone.  The DPG loss instead controls the $L_2$ norm of the
error in the interior trial variables of lower regularity, at the cost of element-local solves: breaking the test space replaces a global Riesz
problem by independent local ones while retaining error control
throughout the trial class.  A further design opportunity in the DPG case, adapting
the test-space metric to the coefficient regime, has been
demonstrated for linear diffusion in~\cite{CDG} to gain parameter robustness.
Its assessment for
the nonlinear problems considered here is left to future work.

\subsection{Results of this paper}

We address the challenges of \S\ref{ssec:vc-definition} for parametric
families of nonlinear second-order elliptic problems.  The starting
point is the squared residual of the operator equation
$\Aa(\clu) = \ell$, measured in the dual of a test space $\Va$,
together with its computable surrogate obtained by replacing $\Va$
with a closed subspace on which the dual norm can actually be
evaluated.  A single abstract theorem
(Theorem~\ref{thm:NDPG-gen-nonlinear-dpg}) establishes a two-sided
bound of the form~\eqref{ideally} for this surrogate, up to a data
oscillation term, under four hypotheses: G\^ateaux
differentiability of the nonlinear operator, a uniform bound on its
derivative, a local nonlinear inf-sup condition, and a Fortin
compatibility condition tying the test subspace to the trial class.

The theorem covers two computationally distinct instances, arising
from the two ways of making the dual norm computable.  The first,
which we call the {\bf{NFOSLS loss function}}, takes the test subspace
to be all of $\Va$ and requires $\Va$ to be an $L_2$-type space
identified with its own dual, so that the residual is an integral  and
no Riesz solve is needed; the
first-order system reformulations that make this possible are the
familiar ones of least-squares finite element methods, and
\eqref{eq:nonlinear-fosls-loss} is an example.  The Fortin condition
is then trivially satisfied, no mesh enters the analysis, and the
resulting two-sided bound holds for every element of the trial
class---in particular for meshless neural network hypothesis classes.
The second, the {\bf{NDPG loss function}}, adopts the DPG methodology,
in which the test space is broken and the test subspace is a finite
element subspace of it.  It is inspired by the DPG approach
of~\cite{carstensen2017} to nonlinearities.
This instance uses {\em hybrid} trial spaces
that augment the interior unknowns with trace variables on element
interfaces, which is what renders the residual locally computable, and
it lowers the regularity demanded of the interior trial variables.

We verify all assumptions of the abstract theorem for two
applications of contrasting nonlinear character.  The first is a class
of quasilinear diffusion problems
$\dive \sigma_\alpha(\nabla u, u) + \gamma_\alpha(u) = f$ on a spatial
domain $\om$, with $\alpha$ ranging over a compact set, in which the
flux may depend on the gradient and on the value of the solution
simultaneously.  Two example cases are worked out in detail within it:
the monotone {\em nonlinear flux} model~\eqref{eq:intro-model}, where the
leading-order operator itself is nonlinear, and the {\em semilinear problem}
$-\dive(\kappa_\alpha\nabla u + g_\alpha(u)\, b_\alpha) +
\gamma_\alpha(u) = f$, where the
nonlinearity enters only through additive lower-order terms.
The second application is a stationary
Kirchhoff problem $-\dive(M_\alpha(G_\alpha(u))\kappa_\alpha \nabla u) = f$,
in which a parametric diffusion coefficient $\kappa_\alpha$ is
modulated by a factor depending on the solution {\em nonlinearly and
  nonlocally}, through the single number
$G_\alpha(u) = (\kappa_\alpha \nabla u, \nabla u)_\om$.  The two  applications illustrate complementary
regimes of the theory.

As already mentioned, our  results apply beyond the machine learning application that
currently  motivates us.
The obtained   a posteriori error estimators and the designed  framework apply
directly to adaptive finite element methods for nonlinear elliptic
problems, providing computable, two-sided error control of precisely
the kind needed to drive refinement in an adaptive algorithm.

\subsection{Relation to other work}

Nonlinear least-squares finite element methods have a longer history
than their DPG counterparts, and the NFOSLS instance of this paper
stands in that line.  The first analysis of a least-squares functional
for a nonlinear system appears to be~\cite{Boche97}, for the
Navier--Stokes equations, where the least-squares formulation is shown
to be well posed along a branch of nonsingular solutions.  Nonlinear
FOSLS was then developed for variably saturated subsurface flow
in~\cite{Stark00a}, together with the Gauss--Newton multilevel solvers
of~\cite{Stark00b}.  There, as in much of what followed, the
functional is linearized before it is minimized, and the equivalence
with the error is established for the linearized functional, hence
only for approximations already close to the solution.  An equivalence
of the {\em nonlinear} functional itself with the error, free of any
such closeness restriction, was obtained in~\cite{Stark07} for an
elastoplasticity model, using nonsmooth arguments, and the method was
carried to the harder problem of hyperelasticity
in~\cite{MulleStarkSchwa14}.  Adaptive least-squares methods with
built-in a posteriori error estimation have since been developed
extensively, so far mostly for linear problems, as
in~\cite{BringCarstStark18} for linear elasticity and
in~\cite{Bring26} for scaling-robust estimation with discontinuous
least-squares discretizations.  Two recent works take up the nonlinear
case.  A unified least-squares framework for nonlinear systems, again
built on a Gauss--Newton linearization, is developed
in~\cite{BertrBrodbRickSchne25}, where the functional is proved
coercive and continuous, and is used as an a posteriori error
estimator, in a neighborhood of the exact solution.  Closest to the
model class of~\S\ref{ssec:nonlinear-diffus-flux-model} below
is~\cite{BringPraet26}, which treats the first-order system
$p = \sigma(\nabla u)$, $-\dive p = f$ under ellipticity and
boundedness conditions on the derivative of $\sigma$ matching the ones
imposed there.  A Zarantonello linearization is minimized in a
weighted least-squares functional, the weights being determined by
those two constants, and the resulting two-sided estimate drives an
adaptive loop that is shown to converge R-linearly from arbitrary
initial guesses; the Zarantonello iteration serves there a purpose
close in spirit to the monotone splitting of the companion
paper~\cite{MDPGpaper} discussed below.  The two-sided bounds obtained
here are free of the linearized objective
and bound the nonlinear functional as it stands: the linearization enters only the analysis.

The theory in this paper, in addition to the standard DPG
theory~\cite{DPGacta, DemkoGopal11}, relies substantially
on~\cite{carstensen2017}, which introduced a direct nonlinear DPG
methodology, formulating the discrete problem as residual minimization
over a broken test space, and established abstract a priori
quasi-optimality and a posteriori error estimates.  A different route
to variationally correct losses, available when the nonlinearity can
be confined to a strongly monotone closure as in~\cite{CH}, is pursued
in the companion paper~\cite{MDPGpaper};
the losses obtained there are
globally variationally correct but require the monotone splitting,
whereas the losses of the present paper require only
conditions on the G\^ateaux derivative and are, in general, variationally correct
only near the solution branch (although in both our applications,
the local condition is in fact verified globally, so that global
variational correctness is actually obtained).

Very recently, DPG residual minimization for nonlinear problems has
drawn the attention of multiple researchers.  A minimum residual DPG
discretization of a semilinear elliptic problem is the subject of the
recent work~\cite{VeraHeuerPraet2026}, which introduces into the trial
space two auxiliary unknowns standing for the nonlinear terms, renders
the remaining equations linear, and enforces the auxiliary relations
by an $L_2$ least-squares penalty.  Its main result is a C\'ea-type
quasi-optimality estimate, conditional on the existence of an exact
solution and a closeness
hypothesis. We treat the same model problem as an example
in~\S\ref{sec:semilinear}  (and we offer a detailed comparison in \S\ref{ssec:quasilin-discussion}). Our theory yields the new
two-sided bound for this example at the  price of a  smallness condition
relating the Lipschitz constants of the lower-order nonlinearities to
the diffusion coefficient.

Residual minimization in dual norms has
also recently been extended to nonlinear operator equations in Banach
spaces in~\cite{MugaPererRojasRuiz26}, with the $p$-Laplacian as a
model problem: there, a residual representative computed through
duality maps furnishes an a posteriori error estimator that drives
mesh adaptivity. This example is not covered by our theory (see Remark~\ref{rem:p-Laplacian}).
The case $p\ne 2$ genuinely motivates their Banach-space setting.
For $p\ge2$, strong monotonicity yields a one-sided estimate
bounding the $(p-1)$th power of the $W_0^{1,p}$ error norm by
the $W^{-1,p'}$ residual norm. This does not imply a two-sided
equivalence between these quantities and falls outside the
residual--error equivalence considered here, but
provides an interesting example of a residual-based
loss function that deserves a separate treatment.
Closer in spirit to the present paper, in addition to our own prior
work~\cite{CDG}, is the recent  a posteriori analysis of~\cite{FuhrRojas25}
for linear well-posed problems, which shows that the error of an
arbitrary approximation, such as a neural network output, is
equivalent to the sum of a computable residual in a discrete dual norm
and a remainder that can in turn be bounded by computable estimators
of finite element type: including both parts in the training loss then
provides error control during optimization.  Such error control is
also our goal in this paper, but for the nonlinear models described
above.

For the nonlocal Kirchhoff model of \S\ref{sec:kirchhoff-ndpg}, a
residual-based a posteriori estimator and a convergent adaptive
algorithm were obtained in~\cite{PalGudi25}, following the a priori
analysis of~\cite{Gudi12}.  The estimates of
\S\ref{sec:kirchhoff-ndpg} differ in that they are two-sided at
{\em arbitrary} trial functions, rather than at the Galerkin solution,
and hold with explicit constants uniform in the parameter $\alpha$.

The paper is organized as follows.  Section~\ref{sec:NDPG} sets up the
abstract framework: it defines the computable residual loss, states the
assumptions it rests on---G\^ateaux differentiability of the nonlinear
operator, a uniform bound on its derivative, a local nonlinear inf-sup
condition and a Fortin compatibility condition---and proves in
Theorem~\ref{thm:NDPG-gen-nonlinear-dpg} that the resulting loss is bounded
above and below by the true error in the trial norm. We close this section by
introducing the least-squares and broken test space instances, giving the
NFOSLS and NDPG losses.
Section~\ref{sec:nonlinear-diffusion} specializes the framework to
quasilinear diffusion with nonlinear fluxes and lower-order terms, verifying
every assumption with explicit constants for both losses, and works out a
model class of parametric fluxes together with semilinear examples whose
nonlinearity sits in the lower-order term.
Section~\ref{sec:kirchhoff-ndpg} treats the stationary Kirchhoff model, where
the diffusion coefficient depends on the solution through a single energy
integral over the whole domain; that nonlocality makes the nonlinearity one
space-independent scalar, whose stronger differentiability makes the loss
globally variationally correct.
Section~\ref{sec:nn} turns the NDPG losses into training objectives for neural
network surrogates of the parameter-to-solution map and reports the numerical
studies: for the quasilinear and the Kirchhoff model alike, the trained
networks are compared with the finite element solution in the space they
predict into and with a high-fidelity solution on a finer mesh. The ratio
of loss to squared error is used to test variational correctness directly.

\section{Nonlinear parametric PDEs and residual loss functions}
\label{sec:NDPG}
\subsection{Problem setting and computable residuals}
\label{ssec:computable-residual}

Let $\Ua$ and $\Va$ be Hilbert spaces over $\bb R$, and let
$\Aa : \Ua \to \Vap$ be a nonlinear map into the dual space $\Vap$
of~$\Va$, for each value of a parameter $\alpha$ varying in a compact
set $\cQ$.  We consider the parametric family of nonlinear operator
equations
\begin{equation}
  \label{eq:Au=ell}
  \Aa(\cl u) = \ell,
\end{equation}
with a fixed $\ell \in \Vap$, and refer to~\eqref{eq:Au=ell} for a
single $\alpha$ as a {\em fiber problem}.  Typically $\cQ$ is high (or
even infinite) dimensional and its elements represent physical properties of the
system, such as material coefficients, while $\Aa$ arises from a
nonlinear partial differential operator on a low-dimensional spatial
domain $\om$; the case of no parametric dependence is included, with
$\cQ$ a singleton.  Our objective is to approximate the {\em
  parameter-to-solution map} $\alpha \mapsto \clu(\alpha) \in \Ua$,
whose range describes the collection of viable physical states one
wishes to explore.  This objective is to be distinguished from a
single evaluation of that map, i.e., from solving one fiber problem. Nevertheless,
the present section and the next few are concerned with the fiber
problems.

One route to solving a fiber problem is Newton's method (used in some of the
experiments below).  An alternative, closer in spirit to learning the
whole map, is to minimize the squared residual of~\eqref{eq:Au=ell},
\begin{equation}
  \label{eq:squareR}
  \Ra \clw := \|\ell - \Aa(\clw)\|_{\Vap}^2,
\end{equation}
which we call the {\em fiber loss}. On an open set
$D\subseteq\Ua$, select for each $\alpha$ an exact solution
$\clu(\alpha)$, which therefore satisfies
\begin{equation}
  \label{eq:exact-minimization}
  \clu(\alpha)
  \;\in \;
  \argmin_{\clz \in D} \| \ell - \Aa(\clz) \|^2_{\Vap}.
\end{equation}
Our assumptions below imply that the selected solution is
locally unique, but
since other minimizers in $D$ are not excluded
we have used ``$\in$'' rather than  equality above.
Measuring the residual in the dual norm is precisely what one expects
to deliver the variational correctness~\eqref{ideally}, since it
corresponds to a uniformly stable variational formulation
of~\eqref{eq:Au=ell} over the pair $\Ua, \Va$.  But it also raises the
obstacle noted in \S\ref{ssec:vc-definition}: a dual norm is generally
not evaluable, unless $\Va$ is self-dual, i.e., of $L_2$ type.  Two
devices, pursued in parallel throughout this paper, remove this
obstacle.

The first arranges the variational formulation so that $\Va$ {\em is}
an $L_2$-type space, identified with its own dual through the Riesz
map, making $\|\cdot\|_{\Vap}^2$ a plain integral.  This is possible
only if all derivatives are moved onto the trial side, which is what
first-order system reformulations accomplish, at the cost of a
stronger trial norm and extra trial variables.  It leads to the NFOSLS
loss function of~\S\ref{ssec:nfosls-abstract}.  The second keeps the
weaker trial norm and replaces $\Va$ by a finite-dimensional subspace
$\Vah \subset \Va$, on which the dual norm is a finite-dimensional
computation.  For this to be cheap one uses the DPG
approach~\cite{DPGacta}, in which $\Va$ is a ``broken'' space, a
Cartesian product of element-local spaces, so that the dual norm
decouples into independent element-by-element computations.  It leads
to the NDPG loss function of~\S\ref{ssec:ndpg-abstract}.

Both devices are captured by one construction: fix a closed subspace
$\Vah \subseteq \Va$ and measure the residual in $(\Vah)'$ rather than
in $\Vap$.  The first device is the choice $\Vah = \Va$; the second is
a genuine finite-dimensional choice.  Accordingly, the theory below is
developed for an arbitrary closed subspace~$\Vah$.  Of the trial class
$\Uah \subseteq \Ua$, over which the loss will be minimized, we
require throughout the approximation property
\begin{equation}
  \label{eq:approx'}
  \lim_{h \to 0} \inf_{\; \clz_h \in \Uah} \| \clw - \clz_h \|_{\Ua} = 0
\end{equation}
for all $\clw \in D$.  Here $h$ is a generic discretization parameter;
when $\Uah$ is a family of finite element spaces on a mesh hierarchy,
it is the mesh size, but $\Uah$ may equally well be a family of neural
network hypothesis classes.

\subsection{The residual loss function and its variational correctness}
\label{ssec:loss-and-vc}

We now proceed to distill the essentials of the  approach
of~\cite{carstensen2017}. The idea is to directly
approximate~\eqref{eq:squareR} and use the approximation in place of
$\Ra\cdot$ in the minimization~\eqref{eq:exact-minimization}.  Solving
this approximate minimization results in the ``nonlinear DPG method''
of~\cite{carstensen2017} when $\Vah$ is a broken finite element space.
Throughout this subsection, however, $\Vah$ is merely a closed subspace
of $\Va$ and $\Uah$ is merely a subset of $\Ua$: everything here,
including the main theorem, is proved at that level of generality.

Letting $D_h = \Uah \cap D$, the {\em approximation of the
  parameter-to-solution map generated by $\Vah$ and $\Uah$} is
$\alpha \mapsto \clu_h(\alpha)$ where
\begin{equation}
  \label{eq:J-min-NonlinDPG}
  \clu_h(\alpha):=
  \argmin_{\clz_h \in D_h} \| \ell - \Aa(\clz_h) \|^2_{(\Vah)'}
\end{equation}
assuming a minimizer exists.  Note that the norm here is the dual norm of the subspace $\Vah$, which
is in general different from the $\Vap$-norm
in~\eqref{eq:exact-minimization}.  The tacit presumption here is that
$\Vah$ is rich enough for the two to be comparable on the trial class
(made precise in the form needed for our analysis by Assumption~\ref{asm:fortin-a} below) and that $D_h$
is nonempty. Because of~\eqref{eq:approx'}, there is an $h$
sufficiently small such that $D$ contains an approximation of $\clu$
from $\Uah$, thus making $D_h$ nonempty.

\begin{definition}
  \label{defn:ndpg}
  We denote the objective function in~\eqref{eq:J-min-NonlinDPG} by
  $\Ja\cdot$, i.e.,
  \begin{equation}
    \label{eq:DPG-loss-defn}
    \Ja {\clz_h} = \| \ell - \Aa(\clz_h) \|^2_{(\Vah)'}, \qquad
    \clz_h \in \Uah, \alpha \in \cQ,
  \end{equation}
  and refer to it as the {\bf{nonlinear residual loss function}}
  generated by $\Vah$.  Its two instances, the NFOSLS and the NDPG
  loss functions, are described in \S\ref{ssec:nfosls-abstract}
  and~\S\ref{ssec:ndpg-abstract}, where their computability is also
  discussed.
\end{definition}

To analyze the minimization in~\eqref{eq:exact-minimization} or its
discrete version, we need the G\^ateaux derivative of $\Aa$ at a point
$\clw \in D$ along $\clz \in \Ua$, which is a functional in $\Vap$
whose action on a $\clv \in \Va$ is given
by the limit
\begin{equation}
  \label{eq:gateaux}
  \begin{aligned}
    & d\Aa(\clw)(\clz)(\clv)
    = \lim_{t \to 0}
    \frac{1}{t}\big[ \Aa(\clw + t \clz)(\clv) - \Aa(\clw)(\clv) \big],
    \text{ when the limit}
    \\
    &\text{exists and gives a bounded linear operator }d\Aa(\clw) : \Ua \to \Vap.
  \end{aligned}
\end{equation}
Emphasizing that we take boundedness
and linearity of $d\Aa(\clw)$ as  part of the G\^ateaux differentiability requirement,  we note that what
separates~\eqref{eq:gateaux} from \Frechet\ differentiability is that
the limit is asked for along each fixed direction $\clz$ only, not
uniformly over $\|\clz\|_{\Ua} = 1$. The stronger
\Frechet\ differentiability is deliberately avoided: see
Remark~\ref{rem:no-frechet}.  It will be
convenient to use the forms generated by $\Aa$ and
\begin{equation}
  \label{eq:aadaa}
  a_\alpha(\clw; \clv) = \Aa(\clw)(\clv),
  \qquad
  da_\alpha(\clw; \clz, \clv) = d\Aa (\clw)(\clz)(\clv),
\end{equation}
for $\clw \in D, \clz \in \Ua,$ and $\clv \in \Va$.  Clearly these
forms are not generally linear in their first argument, but are linear
in their remaining arguments.

Before introducing the theory,
a few more details on the computation of the loss function
in~\eqref{eq:DPG-loss-defn} are in order.
Given an arbitrary $\clw_h \in \Uah$,
the Riesz representation of its residual, denoted by $\Ea a\clwh$, is the
unique function in $\Vah$ given by
\begin{equation}
\label{eq:residual-rep'}
(\Ea a \clwh, \clv_h)_\Va
= \ell(\clv_h) - a_\alpha(\clwh; \clv_h), \qquad \clv_h \in \Vah.
\end{equation}
Both instances are designed so that this Riesz representation is
inexpensive: for the NFOSLS loss the Riesz map of $\Va$ is the
identity, while for the NDPG loss $\Va$ is broken, so that
$\Ea a\clwh$ is computed element by element by inverting small Gram
matrices.  Furthermore, from
definition~\eqref{eq:DPG-loss-defn}, we see that $\Ja\clzh$ is the
square of the supremum of the right-hand side
of~\eqref{eq:residual-rep'}, divided by $\| \clvh \|_{\Va}$, over all
$0 \ne \clvh$ in~$\Vah$. Hence
\begin{equation}
  \label{eq:L-and-E}
  \Ja {\clz_h} = \| \Ea a \clzh \|_{\Va}^2.
\end{equation}
In practice, we compute $\Ja{ \clz_h}$ by first computing
$\Ea a \clzh$ using~\eqref{eq:residual-rep'} and then
using~\eqref{eq:L-and-E}.
Of course, in~\eqref{eq:residual-rep'}  we are only testing by elements from $\Vah$, so
$\Ja {\clz_h}$ is not generally equal to $\Ra \clz_h$ (the two do coincide
when $\Vah = \Va$). The subsequent theory   addresses this gap using Assumption~\ref{asm:fortin-a}.
Note that equations~\eqref{eq:residual-rep'}--\eqref{eq:L-and-E}
also permit easy differentiation of the loss function,
which is essential for the machine learning application.

The theory below is based on the following two
assumptions.  Let the $\delta$-ball around an $\clw \in \Ua$ be
denoted by $S_\delta(\clw)=\{\clz\in\Ua:\|\clz-\clw\|_\Ua<\delta\}$. When $\delta=+\infty$, the set $S_\delta(\clw)$ is the same as $\Ua$.

\begin{assumption}
  \label{asm:NDPGdiff}
  We assume that
  \begin{enumerate}[label=(\alph*)]

  \item for every $\alpha \in \cQ$, there is a
    $\clu = \clu(\alpha) \in D$ satisfying $\Ra \clu = 0$,

  \item  $\Aa$ is G\^ateaux differentiable in the sense
    of~\eqref{eq:gateaux} at each $\clw \in D$ and
    $\alpha \in \cl Q$; the function
    $t \mapsto d\Aa(\clw + t \clz)(\clz)(\clv)$ is continuous
    on $[0,1]$ whenever the segment $\{\clw + t \clz : 0 \le t \le 1\}$
    lies in $D$; and there exists $0<\CdB<\infty$ such that
\begin{equation}
\label{linvc}
\| d\Aa(\clw)(\clz) \|_{\Vap} \le \CdB \| \clz\|_{\Ua},\quad
   \text{ for all } \clw \in D, \clz \in \Ua, \mbox{and}\,\,  \alpha \in \cQ,
\end{equation}

  \item and there exist constants $0<\delta \le +\infty$ and $c_*>0$ such that for
    every $\alpha\in\cQ$ and every
    $\clz\in S_\delta(\clu(\alpha))\cap D$,
    \begin{equation}
      \label{eq:c-prime}
      c_*\|\clu(\alpha)-\clz\|_\Ua \le \|\Aa(\clu(\alpha))-\Aa(\clz)\|_\Vap.
    \end{equation}
\end{enumerate}
\end{assumption}

Note that in Assumption~\ref{asm:NDPGdiff}(a), since $\Ra \clw \ge 0$,
any such $\clu$ is a minimizer in~\eqref{eq:exact-minimization}. No
uniqueness of $\clu(\alpha)$ is stated in (a), but it is in fact a
consequence of part~(c): by~\eqref{eq:c-prime}, $\clu(\alpha)$ is the
only element of $S_\delta(\clu(\alpha)) \cap D$ at which $\Ra\cdot$
vanishes, and hence also the unique minimizer of $\Ra\cdot$ there.
Thus the selected solution $\clu(\alpha)$ is locally unique.
In particular, when $\delta = +\infty$ and $D = \Ua$, part~(c) makes the
minimizer in~\eqref{eq:exact-minimization} globally unique.   Note
also that Assumption~\ref{asm:NDPGdiff} places no lower bound on the
derivative $d\Aa$ because none is needed in the proof of
Theorem~\ref{thm:NDPG-gen-nonlinear-dpg} below (although in examples
we have in mind, $d\Aa$ is a bijection).  We have also not assumed the
continuity of $d \Aa$ since this continuity is not available in many interesting examples: see Remark~\ref{rem:sufficient-c} for how
such continuity, when available, can be leveraged.
We refer to Assumption~\ref{asm:NDPGdiff}(c) as a ``local nonlinear inf-sup'' type condition.

\begin{assumption}
  \label{asm:fortin-a}
  Let $\Vah$ be a closed subspace of $\Va$.  We assume that there is a
  continuous linear operator $\fp: \Va \to \Vah$, referred to as a
  Fortin operator, satisfying
  \begin{equation}
    \label{eq:Fortin}
    a_\alpha(\clw_h; \clv - \fp \clv ) = 0,
    \quad
    \text{ for all }
    \clwh \in D_h, \clv \in \Va.
  \end{equation}
\end{assumption}

Note that the choice $\Vah = \Va$ with $\fp = \mathrm{id}$ always
satisfies Assumption~\ref{asm:fortin-a}, with $\|\fp\|_{L(\Va)} = 1$
and with the oscillation term $\osc(\ell)$ defined below vanishing
identically.  This trivial choice yields the least-squares instance
of~\S\ref{ssec:nfosls-abstract}, so the substance of
Assumption~\ref{asm:fortin-a} lies entirely in the DPG instance
of~\S\ref{ssec:ndpg-abstract}.

Under these assumptions, we prove a result on {\it a posteriori error
  estimation}, giving  reliability and efficiency estimates with
$\Ja\cdot$ as the error estimator. The discrete spaces occur only in
Assumption~\ref{asm:fortin-a}. In Assumption~\ref{asm:NDPGdiff}, in
addition to the existence of an exact solution, G\^ateaux differentiability
of $\Aa$, and a uniform upper bound on $d\Aa$,
part~(c) assumes a ``local nonlinear inf-sup condition.''  Let
\[
  \osc(\ell) =
  \sup_{0 \ne \clv \in \Va}
  \frac{ |\ell (\clv  - \fp  \clv )|}{ \| \clv \|_{\Va}}
  \quad\text { and } \quad
  \| \fp  \|_{L(\Va)} = \sup_{0 \ne \clv \in \Va}
  \frac{ \|\fp  \clv \|_{\Va}}{\| \clv \|_{\Va}}.
\]
The first quantity above represents a term akin to the usual data
oscillation terms and the second is the operator norm of the Fortin
operator $\fp $; they equal $0$ and $1$, respectively, in the
least-squares instance.  For linear DPG formulations,
$\|\fp\|_{L(\Va)}$ has been
extensively studied: see, e.g.,~\cite{DPGacta,GopalQiu14}. The same
techniques can also aid in verifying Assumption~\ref{asm:fortin-a} in
some nonlinear cases, as we will see in examples.

\begin{theorem}[Local two-sided bounds for nonlinear residual loss with data oscillation]
  \label{thm:NDPG-gen-nonlinear-dpg}
  Suppose Assumptions~\ref{asm:NDPGdiff} and~\ref{asm:fortin-a} hold.
  Then the error estimator (or the loss function) $\Ja\cdot$
  of~\eqref{eq:DPG-loss-defn}
  satisfies the following local reliability and efficiency estimates:
  there are positive constants $c$ and
  $C$ such that given any $\alpha \in \cQ$, there exist an $h_0>0$ and
  a set $\tilde D_{h_0} \subseteq D$, such that for all $h < h_0$,
  $\tilde D_{h_0}$ contains a nonempty open subset of $\Uah$ and every
  $\clwh \in \tilde D_{h_0} \cap \Uah$ satisfies
  \begin{align}
    \label{eq:reliability}
    c & \| \clu  - \clw_h \|_\Ua^2  \le
    \Ja{\clw_h}\,
          \| \fp \|_{L(\Va)}^2
          + \osc(\ell)^2,
    \\
    \label{eq:efficiency}
    \Ja{\clw_h} \le C & \| \clu - \clw_h \|_\Ua^2,
  \end{align}
  and we may choose $c= c_*^2/2$ and $C=\CdB^2.$ If \eqref{eq:c-prime}
  holds globally, i.e., if $D = \Ua$ and $\delta=+\infty$ in
  Assumption~\ref{asm:NDPGdiff}(c), then we may take $h_0=+\infty$ and
  $\tilde D_{h_0} = \Ua$.  If $\osc(\ell)=0$ and
  $\| \fp \|_{L(\Va)}^2$ is bounded uniformly in $\alpha$ and $h$,
  then~\eqref{eq:reliability}--\eqref{eq:efficiency} imply local
  variational correctness in the sense of~\eqref{ideally} on
  $\cl H = \tilde D_{h_0} \cap \Uah$, and if, in addition, $D = \Ua$
  and $\delta=+\infty$, then the variational correctness holds with
  $\cl H=\Uah$.
\end{theorem}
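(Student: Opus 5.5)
The plan is to prove the two estimates separately, both starting from the identity $\ell = \Aa(\clu)$ given by Assumption~\ref{asm:NDPGdiff}(a). Efficiency will come from a mean-value argument along the segment from $\clw_h$ to $\clu$. Reliability will come from the local inf-sup condition~\eqref{eq:c-prime} combined with the Fortin property~\eqref{eq:Fortin}. I will first fix the set $\tilde D_{h_0}$. Since $D$ is open, I choose $0<\rho\le\delta$ with $S_\rho(\clu)\subseteq D$ (any $\rho\le\delta$ works when $D=\Ua$) and set $\tilde D_{h_0} := S_\rho(\clu)$. This set is convex, so every segment between two of its points lies in $D$, which is what Assumption~\ref{asm:NDPGdiff}(b) needs. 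It also lies in $S_\delta(\clu)\cap D$, so~\eqref{eq:c-prime} applies on it. By the approximation property~\eqref{eq:approx'} there is an $h_0$ such that for $h<h_0$ some $\clz_h\in\Uah$ satisfies $\|\clu-\clz_h\|_\Ua<\rho/2$. Then $S_{\rho/2}(\clz_h)\cap\Uah$ is a nonempty relatively open subset of $\Uah$ contained in $\tilde D_{h_0}$. Note that $h_0$ depends on $\alpha$, as the statement allows. In the global case $D=\Ua$, $\delta=+\infty$, I take $\rho=+\infty$, so $\tilde D_{h_0}=\Ua$ and no restriction on $h$ is needed.

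\emph{Efficiency.} Fix $\clw_h\in\tilde D_{h_0}\cap\Uah$, set $\clz=\clu-\clw_h$, and fix $\clv\in\Va$. Consider the scalar function $\varphi(t)=\Aa(\clw_h+t\clz)(\clv)$ on $[0,1]$. Gâteaux differentiability~\eqref{eq:gateaux} at each point $\clw_h+t\clz\in D$ gives $\varphi'(t)=d\Aa(\clw_h+t\clz)(\clz)(\clv)$, at the endpoints as a one-sided derivative. Assumption~\ref{asm:NDPGdiff}(b) says $\varphi'$ is continuous on $[0,1]$, so the fundamental theorem of calculus applies and gives
\[
(\Aa(\clu)-\Aa(\clw_h))(\clv)=\int_0^1 d\Aa(\clw_h+t\clz)(\clz)(\clv)\,dt .
\]
Using~\eqref{linvc}, the integrand is at most $\CdB\|\clz\|_\Ua\|\clv\|_\Va$ in absolute value. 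Taking the supremum over $\clv$, first over $\Vah$ and then enlarging to $\Va$, gives $\Ja{\clw_h}\le\|\ell-\Aa(\clw_h)\|_{\Vap}^2\le \CdB^2\|\clu-\clw_h\|_\Ua^2$. This is~\eqref{eq:efficiency} with $C=\CdB^2$.

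\emph{Reliability.} Since $\clw_h\in S_\delta(\clu)\cap D$, condition~\eqref{eq:c-prime} gives
\[
c_*\|\clu-\clw_h\|_\Ua\le\sup_{\clv\in\Va}\frac{\ell(\clv)-a_\alpha(\clw_h;\clv)}{\|\clv\|_\Va}.
\]
Because $\clw_h\in D_h$, the Fortin property~\eqref{eq:Fortin} lets me split the numerator as
\[
\ell(\clv-\fp\clv)+\big[\ell(\fp\clv)-a_\alpha(\clw_h;\fp\clv)\big].
\]
The first term is bounded by $\osc(\ell)\|\clv\|_\Va$. By~\eqref{eq:residual-rep'}--\eqref{eq:L-and-E}, the second term equals $(\Ea a\clw_h,\fp\clv)_\Va$, so it is bounded by $\Ja{\clw_h}^{1/2}\|\fp\|_{L(\Va)}\|\clv\|_\Va$. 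Together these give $c_*\|\clu-\clw_h\|_\Ua\le \osc(\ell)+\|\fp\|_{L(\Va)}\Ja{\clw_h}^{1/2}$. Squaring with $(a+b)^2\le 2a^2+2b^2$ yields~\eqref{eq:reliability} with $c=c_*^2/2$.

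The final assertions follow directly. If $\osc(\ell)=0$ and $\|\fp\|_{L(\Va)}$ is bounded uniformly, the two estimates give~\eqref{ideally} on $\tilde D_{h_0}\cap\Uah$, and on all of $\Uah$ in the global case. The only delicate step is the mean-value identity, because only Gâteaux (not Fréchet) differentiability is assumed. The plan handles this by working with the scalar function $\varphi$ for each fixed $\clv$ and using the assumed continuity in $t$, rather than any operator-valued integral. This is also why the convexity of $\tilde D_{h_0}$ matters: it guarantees that the whole segment stays inside $D$.
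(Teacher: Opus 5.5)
Your proof is correct and follows the paper's argument. Efficiency comes from the fundamental theorem of calculus along the segment from $\clw_h$ to $\clu$ with the bound~\eqref{linvc}, and reliability from~\eqref{eq:c-prime}, the Fortin splitting of $\ell(\clv)-a_\alpha(\clw_h;\clv)$, and $(a+b)^2\le 2a^2+2b^2$. The differences are cosmetic: you take $\tilde D_{h_0}$ to be the whole ball $S_\rho(\clu)$ rather than a union of open neighborhoods of approximants, and you bound $\Ja{\clw_h}$ by $\|\ell-\Aa(\clw_h)\|_{\Vap}^2$ directly instead of testing the Riesz representation with $\Ea{a}{\clwh}$ itself.
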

\begin{proof} {\em Step 1.}
  Abbreviate $\clu \equiv \clu(\alpha)$.
  Let $\delta$ and $c_*$ be as in Assumption~\ref{asm:NDPGdiff}(c),
  choose $\delta_1 > 0$ so small that $S_{\delta_1}(\clu) \subseteq D$,
  and set $\delta_2 = \min(\delta, \delta_1)$.
  We claim that for every $\clz \in S_{\delta_2}(\clu)$,
  \begin{equation}
    \label{eq:33-prime}
    c_* \|\clu - \clz\|_{\Ua}
    \;\le\; \|\Aa(\clu) - \Aa(\clz)\|_{\Vap}
    \;\le\; \CdB \|\clu - \clz\|_{\Ua}.
  \end{equation}
  The lower bound is immediate from Assumption~\ref{asm:NDPGdiff}(c),
  since $S_{\delta_2}(\clu) \subseteq S_\delta(\clu) \cap D$.
  For the upper bound, since $S_{\delta_2}(\clu)$ is convex, the segment
  $\clz_t = \clz + t(\clu - \clz)$ lies in $S_{\delta_2}(\clu) \subseteq D$
  for all $t \in [0,1]$.
  On this line segment, define
  $\psi(t) = a_\alpha(\clz_t; \clv)$ for a
  $\clv \in \Va$.
  The  Fundamental Theorem of Calculus (FTC) applied
  to $\psi$ yields
  \begin{equation}
    \label{eq:ftc}
    \langle \Aa(\clu) - \Aa(\clz),\, \clv \rangle
    = \psi(1) - \psi(0) = \int_0^1 \psi'(t) \, dt
    = \int_0^1 d\Aa(\clz_t)(\clu - \clz)(\clv)\, dt.
  \end{equation}
  By Assumption~\ref{asm:NDPGdiff}(b), the integrand is bounded by
  $\CdB \|\clu - \clz\|_{\Ua} \|\clv\|_{\Va}$, so taking the supremum
  over $\{ \clv \in \Va: \|\clv\|_{\Va} = 1\}$ yields the upper bound
  in~\eqref{eq:33-prime}.
  (If $\delta=+\infty$ and $D = \Ua$, then we may take
  $\delta_2 = +\infty$.)

  {\em Step 2.} We now prove the reliability
  estimate~\eqref{eq:reliability}.
  By~\eqref{eq:approx'}, we can find an $h_0>0$ such that for all
  $h < h_0$, there is some $\clph$ in
  $\Uah \cap S_{\delta_2}(\clu) \subset D$, and consequently, there is
  also an open neighborhood $U_h(\clph)$ of $\clph$ in $D_h$, which we
  may take to be contained in $S_{\delta_2}(\clu)$. Then
  $\tilde D_{h_0} = \cup_{h < h_0} U_h(\clph)$ is contained in
  $S_{\delta_2}(\clu) \cap D$ and contains  for each $h < h_0$
  the nonempty open subset   $U_h(\clph)$ of $\Uah$.
  (If $\delta=+\infty$ and $D = \Ua$, then $S_{\delta_2}(\clu) = \Ua$,
  so we may take $h_0=+\infty$ and $\tilde D_{h_0} = \Ua$, giving
  $\tilde D_{h_0} \cap \Uah = \Uah$ for all $h$.)

  Since $\Ra \clu = 0$ by Assumption~\ref{asm:NDPGdiff}(a), we have
  $\Aa(\clu) = \ell$, so applying the lower bound in~\eqref{eq:33-prime}
  with $\clz = \clwh \in \tilde D_{h_0} \cap \Uah \subseteq S_{\delta_2}(\clu)$ yields
  \begin{equation}
    \label{eq:34}
    c_* \| \clu - \clw_h \|_{\Ua}
    \le
    \sup_{0 \ne\clv \in \Va}
    \frac{|\ell(\clv) - a_\alpha(\clw_h; \clv)|}{ \| \clv \|_{\Va}}.
  \end{equation}
  To bound the numerator, observe that
  \begin{align*}
    \ell(\clv) - a_\alpha(\clw_h; \clv)
    & = \ell(\fp \clv) - a_\alpha(\clw_h; \fp  \clv)
      + \ell(\clv - \fp \clv) - a_\alpha( \clw_h; \clv - \fp  \clv)
    \\
    & = (\Ea{a}{\clwh}, \fp  \clv)_{\Va}
      + \ell(\clv - \fp \clv)
    \\
    & \le \Big(
      \Ja{\clwh}^{1/2}  \| \fp  \|_{L(\Va)}
      + \osc (\ell ) \Big)
      \| \clv \|_{\Va}
  \end{align*}
  where we have used
  Assumption~\ref{asm:fortin-a}'s equation~\eqref{eq:Fortin},  \eqref{eq:residual-rep'},
  and~\eqref{eq:L-and-E} in succession. Using this to bound the right-hand
  side of~\eqref{eq:34} after squaring,
  \begin{align*}
    c_*^2 \| \clu - \clw_h \|_{\Ua}^2
    \le
    2 \| \fp  \|^2_{L(\Va)}
      \Ja{\clw_h}
      + 2 \osc(\ell)^2,
  \end{align*}
  proves  the reliability inequality~\eqref{eq:reliability}.

  {\em Step 3.}  To prove the efficiency
  estimate~\eqref{eq:efficiency}, we start
  from~\eqref{eq:residual-rep'} to get
  \begin{align*}
    \| \Ea{a}{\clwh} \|_{\Va}^2 & = \ell ( \Ea{a}{\clw_h}) - a_\alpha( \clw_h; \Ea{a}{\clwh})
    \\
    & = a_\alpha( \clu; \Ea{a}{\clwh}) - a_\alpha(\clwh; \Ea{a}{\clwh})
    \\
    & \le \CdB \| \clu - \clwh\|_{\Ua} \| \Ea{a}{\clwh} \|_{\Va}
  \end{align*}
  where the second equality holds because $\ell = \Aa(\clu)$ by
  Assumption~\ref{asm:NDPGdiff}(a) and, in the last step, we have
  used the upper bound in~\eqref{eq:33-prime}, which applies since
  $\tilde D_{h_0} \subset S_{\delta_2}(\clu).$
  In view of \eqref{eq:L-and-E}, the
  resulting bound for $\| \Ea{a}{\clwh} \|_{\Va}$ gives the claimed
  estimate~\eqref{eq:efficiency}.
\end{proof}

\begin{remark}[Best approximation from the trial {\em set}]
  \label{rem:quasioptimality}
  Theorem~\ref{thm:NDPG-gen-nonlinear-dpg} is an {\it a posteriori}
  statement, but it constrains the loss minimizer {\it a priori} as
  well.  Such a passage from two-sided {\it a posteriori} bounds to
  {\it a priori} error estimates is by now routine: see the linear
  precedent in~\cite[Remark~2.5]{CarstDemkoGopal14}. Repeating the
  idea, let $\clu_h$ realize
  $\inf\{ \Ja{\clw} : \clw \in \tilde D_{h_0} \cap \Uah\}$.  Reading
  the reliability bound~\eqref{eq:reliability} at $\clu_h$, replacing
  $\Ja{\clu_h}$ by $\Ja{\clw}$ for any
  $\clw \in \tilde D_{h_0} \cap \Uah$ (which minimality permits), and
  then discarding $\Ja\clw$ in favor of the error through the
  efficiency bound~\eqref{eq:efficiency}, one arrives at
  \begin{equation}
    \label{eq:quasioptimality}
    \| \clu - \clu_h \|^2_{\Ua}
    \;\le\;
    \varLambda \!\!\!
    \inf_{\clw \in \tilde D_{h_0} \cap \Uah} \!\!\!
    \| \clu - \clw \|^2_{\Ua}
    \; + \; \frac 1 c\, \osc(\ell)^2,
    \qquad
    \varLambda := \frac C c \, \| \fp \|^2_{L(\Va)} .
  \end{equation}
  An exact minimizer of the loss therefore achieves, up to the factor
  $\varLambda$ and the oscillation term, the best approximation error
  attainable in the admissible trial set.  Two aspects
  of~\eqref{eq:quasioptimality} deserve emphasis.  The first is that
  no linear structure on $\Uah$ was used either
  in~\eqref{eq:quasioptimality} or anywhere in the proof of
  Theorem~\ref{thm:NDPG-gen-nonlinear-dpg}.  The infimum
  in~\eqref{eq:quasioptimality} therefore runs over precisely the set
  on which the loss was minimized, whatever that set may be.  For the
  neural network classes of \S\ref{sec:nn}, where the loss is
  minimized in expectation over $\alpha$ rather than for each $\alpha$
  separately, the same argument applied to the integrated
  estimate~\eqref{vc} gives a genuine best-approximation guarantee,
  in the norm~\eqref{eq:Xnorm}, for the minimizer of the expected loss
  over a nonlinear set of realizations.  The amplification $\varLambda$, in turn, is the
  reliability-to-efficiency ratio delivered by variational
  correctness, inflated by the Fortin operator norm. In the
  least-squares instance of \S\ref{ssec:nfosls-abstract}, provided
  $D=\Ua$ and $\delta = +\infty$, this inflation disappears along with
  the oscillation, leaving
  $\| \clu - \clu_h \|^2_{\Ua} \le (C/c) \inf_{\clw \in \Uah} \| \clu
  - \clw\|^2_{\Ua}$
  for any chosen trial class $\Uah\subseteq\Ua$.
\end{remark}

\begin{remark}[A {\em sufficient condition} for the local nonlinear inf-sup condition]
  \label{rem:sufficient-c}
  When the neighborhoods $S_r(\clu(\alpha))$ are in $D$ for some $r>0$
  independent of $\alpha$, the condition~(c) of
  Assumption~\ref{asm:NDPGdiff} is implied by parts~(a) and~(b)
  together with
  \begin{itemize}
  \item[{\rm{(b$'$)}}]
    a quantitative uniform lower bound on the derivative, i.e.,
    there is a $\cdB > 0$ such that
    \begin{equation}
      \label{eq:lower-dA}
      \cdB \|\clz\|_{\Ua}
      \le \|d\Aa(\clw)(\clz)\|_{\Vap},
      \qquad \clw \in D,\; \clz \in \Ua,\; \alpha \in \cQ,
    \end{equation}
  \item[{\rm{(c$'$)}}] and the continuity of the derivative
    $d\Aa : D \to L(\Ua, \Vap)$, uniformly in $\alpha$, in the sense that
    the quantity
    \begin{equation}
      \label{eq:uniform-cty}
      c(\delta) := \sup_{\alpha\in\cQ}
      \sup_{\clz \in S_\delta(\clu(\alpha))}
      \|d\Aa(\clz) - d\Aa(\clu(\alpha))\|_{L(\Ua,\Vap)}
      \to 0 \quad \text{ as } \delta \to 0,
    \end{equation}
  \end{itemize}
  where $L(\Ua, \Vap)$ denotes the space of bounded linear operators
  from $\Ua$ to $\Vap$ endowed with the operator norm topology. To show this,
  by~\eqref{eq:uniform-cty},
  choose $0< \delta \le r $, independent of $\alpha$, with
  $c(\delta) < \cdB/2$. Fix $\alpha$, write $\clu=\clu(\alpha)$,
  and let $\clz\in S_\delta(\clu)$. Then the segment
  $\clz_t = \clu + t(\clz - \clu)$, $t\in [0, 1]$, lies
  in $S_\delta(\clu) \subset D$.
  Proceeding as in the proof of
  Theorem~\ref{thm:NDPG-gen-nonlinear-dpg} to get \eqref{eq:ftc} and
  then subtracting $d\Aa(\clu)(\clu-\clz)(\clv)$ from both
  sides yields
  \[
     \ip{ \Aa(\clu) - \Aa(\clz)      - d\Aa(\clu)(\clu-\clz), \clv}
    = \int_0^1 \Big[ d\Aa(\clz_t)(\clu-\clz)(\clv)
    - d\Aa(\clu)(\clu-\clz)(\clv)\Big]\,dt.
  \]
  Since every $\clz_t \in S_\delta(\clu)$, the integrand is bounded by
  $c(\delta)\|\clu-\clz\|_\Ua\|\clv\|_\Va$, so
  \[
    \bigl\|d\Aa(\clu)(\clu-\clz)
    - \bigl[\Aa(\clu)-\Aa(\clz)\bigr]\bigr\|_{\Vap}
    \le c(\delta)\|\clu-\clz\|_{\Ua}.
  \]
  Combining with~\eqref{eq:lower-dA} and the triangle inequality,
  \[
    \|\Aa(\clu)-\Aa(\clz)\|_{\Vap}
    \ge \cdB\|\clu-\clz\|_{\Ua} - c(\delta)\|\clu-\clz\|_{\Ua}
    \ge \frac{\cdB}{2}\|\clu-\clz\|_{\Ua},
  \]
  which is~\eqref{eq:c-prime} with $c_* = \cdB/2$.
  We hasten to add that {\rm(b$'$)} and {\rm(c$'$)} are sufficient,
  but far from necessary, for Assumption~\ref{asm:NDPGdiff}(c).  The
  proof of Theorem~\ref{thm:NDPG-gen-nonlinear-dpg} uses no lower
  bound on $d\Aa$ at all, so the uniformity of~\eqref{eq:lower-dA} in
  both $\clw$ and $\alpha$ demands more than the theory requires;
  indeed, even in the above argument, \eqref{eq:lower-dA} was used
  only at $\clw = \clu(\alpha)$.  Moreover, the operator-norm
  continuity demanded by {\rm(c$'$)} may be unavailable for
  Nemytskii-type nonlinearities such as some of the later examples
  (cf.~Remark~\ref{rem:no-frechet}).
  This is why we assumed the weaker~\eqref{eq:c-prime}, which
  will be verified directly for such examples in
  Proposition~\ref{prop:NDPGdiff-verif'}, without recourse to~{\rm(c$'$)}.
\end{remark}

\begin{remark}[Uniformity of the constants in $\alpha$]
  \label{rem:alpha-uniform}
  Since $c_*$ and $\CdB$ in Assumption~\ref{asm:NDPGdiff} are independent of the
  parameter~$\alpha$, the constants in~\eqref{eq:reliability} and
  \eqref{eq:efficiency} can be chosen independently of the
  parameter~$\alpha$. However, the set $\tilde D_{h_0}$
  in Theorem~\ref{thm:NDPG-gen-nonlinear-dpg}
  may  depend on $\alpha$. Also, if one chooses to consider parameters taken  from
  an unbounded random distribution (such as  the normal
  distribution), then it may be unreasonable to ask for the constants
  $c_*$ and $\CdB$ in Assumption~\ref{asm:NDPGdiff}(b)--(c)
  to hold uniformly for all $\alpha$.
\end{remark}

\begin{remark}[The Fortin condition for the {\em linearized} form]
  \label{rem:fortin-linearized}
  Suppose  $\Uah$ is not just a set, but a  linear subspace of
  $\Ua$, so that a perturbation of one trial function by another
  remains in the trial class. Then, we claim that
  \begin{equation}
    \label{eq:1}
    \text{Assumption~\ref{asm:fortin-a}}\implies
    da_\alpha(\clw_h; \clz_h, \clv - \fp \clv ) = 0, \quad
    \clw_h \in D_h,\; \clz_h \in \Uah, \;\clv \in \Va.
  \end{equation}
  Indeed, since $\clw_h + t \clz_h$ lies in $\Uah$ and $D$ is open,
  the point $\clw_h + t \clz_h$ is in $D_h$ for
  $t \in (-t_0, t_0)$ for some sufficiently
  small~$t_0>0$. Then, by~\eqref{eq:Fortin}, the function
  $\psi(t) = a_\alpha(\clw_h + t \clz_h; \clv - \fp \clv)$ is zero for
  $t \in (-t_0, t_0)$. Hence its derivative $\psi'(0)$
  vanishes. Since
  $\psi'(t) = da_\alpha(\clw_h + t \clz_h; \clz_h, \clv - \fp \clv)$,
  the claim~\eqref{eq:1} follows.  Several results in
  \cite{carstensen2017} are stated under the assumption
  that
  $da_\alpha(\clw_h; \clz_h, \clv - \fp \clv ) = 0$.  In view of~\eqref{eq:1}, they also hold under
  Assumption~\ref{asm:fortin-a}.
\end{remark}

\begin{remark}[A nonlinear {\em mixed} reformulation]
  \label{rem:mixed-reformulation}
  Suppose again that  $\Uah$ is a  linear subspace of
  $\Ua$, so that the perturbations considered  below stay within the trial
class.  Let us observe what the first-order
optimality conditions of the optimization
in~\eqref{eq:exact-minimization} and~\eqref{eq:J-min-NonlinDPG} yield.
A $\clw_h$ in $D_h$ is  a {\em stationary point} of  $\Ja\cdot$
if
\[
  \lim_{\delta\to 0 }
  \frac{\Ja{\clw_h + \clz_h\, \delta} - \Ja{\clw_h}}{\delta}
  = 0
\]
for all directions $\clz_h \in \Uah.$ It is easy to
see~\cite{carstensen2017} that a $\clu_h \in D_h$ is a stationary
point of $\Ja\cdot$ if and only if $\cl u_h$ together with
$\veps_h
\in \Vah$
solves the nonlinear mixed system
\begin{subequations}
  \label{eq:nonlinear-mixed}
  \begin{align}
    \label{eq:eps-u}
    (\veps_h, \clv_h)_\Va + a_\alpha(\clu_h; \clv_h)
    &  = \ell( \clv_h),
    && \clv_h \in \Vah,
    \\
    da_\alpha  (\clu_h; \clz_h, \veps_h)
    & = 0,
    && \clz_h \in \Uah.
  \end{align}
\end{subequations}
This {\em nonlinear mixed DPG reformulation} generalizes the
well-known~\cite{DHSW,DPGacta} reformulation of the DPG method as a mixed
method for linear problems.
The prior equation~\eqref{eq:residual-rep'} is
exactly~\eqref{eq:eps-u}, i.e., $\veps_h = \Ea a{\clu_h} \in
\Vah$. The undiscretized version of~\eqref{eq:nonlinear-mixed} can be
written in operator form yielding the stationarity
conditions for
\eqref{eq:exact-minimization}: a stationary point $\clu$ in $D$,
together with its residual representative $\veps$ in $\Va$, satisfies
\begin{equation}\label{eq:mixedcarst}
  \begin{pmatrix}
    \Rza & \Aa \\
    d\Aa'(\cl u) & 0
  \end{pmatrix}
  \begin{pmatrix}
    \veps \\ \cl u
  \end{pmatrix}
  =
  \begin{pmatrix}
    \ell \\ 0
  \end{pmatrix}
  \qquad
  \text{ in } \; \Vap \times \Uap.
\end{equation}
Here, $\Rza : \Va \to \Vap$ is the Riesz isometry and $d\Aa'$ denotes
the dual operator of $d\Aa$.  At an exact solution $\clu$, the
residual representative $\veps$ vanishes. Conversely, a solution of
\eqref{eq:mixedcarst} with $\veps=0$ satisfies $\Aa(\clu)=\ell$.
The discrete residual representation $\veps_h$ does not vanish in
general.
\end{remark}

We conclude this section by describing, in the next two subsections,
the two instances of Definition~\ref{defn:ndpg} used in the remainder
of the paper.

\subsection{The least-squares instance}
\label{ssec:nfosls-abstract}

Consider the choice $\Vah = \Va$ and $\fp = \mathrm{id}$, which, as
already noted, satisfies Assumption~\ref{asm:fortin-a} trivially with
$\|\fp\|_{L(\Va)} = 1$, and which makes $\osc(\ell) = 0$ for every
$\ell \in \Vap$.  The loss is then the exact continuous residual,
\begin{equation}
  \label{eq:ls-loss}
  \Ja\clz = \| \ell - \Aa(\clz)\|_{\Vap}^2 = \Ra \clz ,
\end{equation}
and Theorem~\ref{thm:NDPG-gen-nonlinear-dpg} yields
\begin{equation}
  \label{eq:ls-vc}
  c\, \| \clu - \clz \|_{\Ua}^2
  \;\le\; \Ja \clz
  \;\le\; C\, \| \clu - \clz \|_{\Ua}^2,
  \qquad \clz \in \tilde D_{h_0} \cap \Uah,
\end{equation}
with $c = c_*^2/2$ and $C = \CdB^2$.  When
Assumption~\ref{asm:NDPGdiff}(c) holds globally, i.e., $D = \Ua$ and
$\delta = +\infty$, the set $\tilde D_{h_0}$ may be taken to be all of
$\Ua$, so that~\eqref{eq:ls-vc} holds for every $\clz \in \Ua$.  In
that case no mesh, and indeed no trial class at all, enters
the statement: the trial class $\Uah$ in~\eqref{eq:ls-vc} is
arbitrary, so $\clz$ may be taken from a meshless neural network
hypothesis class within~$\Ua$.

For this instance to be useful, the norm $\|\cdot\|_{\Vap}$
in~\eqref{eq:ls-loss} must be directly evaluable.  This is the case
when $\Va$ is an $L_2$-type space identified with its own dual.
Since the examples we have in mind
achieve this by first-order system reformulations, in which all
derivatives are on   trial functions, we think of this
instance as a nonlinear first-order system least-squares (FOSLS)
functional.  Accordingly, we call it the {\bf{NFOSLS loss function}}.
Concrete realizations are given in~\S\ref{ssec:nfosls-diffusion}
and~\S\ref{ssec:kirchhoff-nfosls}.

\subsection{The DPG instance}
\label{ssec:ndpg-abstract}

In the DPG methodology~\cite{DPGacta,DemkoGopal11}, the trial space is
enlarged by interface unknowns so that the test space $\Va$ can be
taken to be ``broken'', and the Riesz map of $\Va$ becomes
element-local.  Namely, we pick a conforming simplicial mesh $\oh$
from a hierarchy of uniformly shape-regular meshes (of mesh size $h$)
and set
\begin{equation}
  \label{eq:V-broken'}
  \Va = \bigtimes_{K \in \oh} \VK,
  \qquad
  \Vah = \bigtimes_{K \in \oh} \VhK,
\end{equation}
where $\VhK \subseteq \VK$ are finite dimensional.  Functions in
$\Vah$ have no interelement continuity across element interfaces, a
property often stated as ``$\Vah$ is
broken''~\cite{CarstDemkoGopal16}.  The trial space $\Ua$ is then also
mesh-dependent, through the interface (or ``skeleton'') components of
its elements, but all our estimates remain mesh-independent.  For such
$\Vah$, the computation of the squared dual norm
\begin{equation}
  \label{eq:6}
  \| \ell - \Aa(\clz_h) \|_{(\Vah)'}^2
  = \sup_{0 \ne \clv_h \in \Vah}
  \frac{ |\ell(\clv_h) - a_\alpha(\clz_h; \clv_h)|^2 }{ \| \clv_h \|_{\Va}^2}
\end{equation}
decouples into independent element-by-element computations: as
described in~\eqref{eq:residual-rep'}--\eqref{eq:L-and-E}, one solves
small symmetric positive definite systems for $\Ea a{\clz_h}$, one per
element $K \in \oh$, with the Gram matrix of the $\VhK$-inner product.
We call the resulting loss the {\bf{NDPG loss function}}, or the
nonlinear DPG loss function. This is similar to that given in~\cite{carstensen2017} for a primal DPG formulation.
In this instance, $\Vah$ should be chosen large enough
that $ \|\ell- \Aa(\clw_h)\|^2_{(\Vah)'}$
approximates $  \Ra{\clw_h}$
for all $\clw_h\in \Uah,$
which is exactly what Assumption~\ref{asm:fortin-a} delivers, through
Theorem~\ref{thm:NDPG-gen-nonlinear-dpg}, in the form of the two
factors $\|\fp\|_{L(\Va)}$ and $\osc(\ell)$.  Concrete realizations
are given in~\S\ref{ssec:ndpg-diffusion}
and~\S\ref{ssec:kirchhoff-setting}.

\section{Quasilinear diffusion with nonlinear fluxes and lower-order terms}
\label{sec:nonlinear-diffusion}
This section develops the first of the two applications of the theory
of~\S\ref{sec:NDPG}.
This application pertains to
quasilinear second-order elliptic PDEs where the   flux
is allowed to depend nonlinearly on the gradient and
on the value of the solution simultaneously, as in
$\sigma_\alpha(p,s) = -\phi_\alpha(|p|,s)\, p$, where the conductivity
responds to the solution as well as to its gradient.
Two example cases, extremes of the same spectrum,  are
worked out in detail: the {\em monotone nonlinear flux} case, in which
the leading-order operator itself is nonlinear and no lower-order
terms are present, and the {\em semilinear} case, in which the
leading-order operator is linear and all the nonlinearity sits in
additive lower-order terms.

On an open bounded set
$\om \subset \bb R^d$, given $f \in L_2$, we consider the
quasilinear boundary value problem
\begin{equation}
  \label{eq:4}
  \begin{aligned}
    \dive\, \sigma_\alpha( \nabla u, u) + \gamma_\alpha(u) & = f  && \text{ in } \om,
    \\
    u & = 0 && \text{ on } \partial \om,
  \end{aligned}
\end{equation}
in which the {\em flux function} $\sigma_\alpha : \bb R^d \times \bb R
\to \bb R^d$ may depend nonlinearly on both the gradient and the value
of the solution, and $\gamma_\alpha : \bb R \to \bb R$ is a nonlinear
reaction term.
Writing the flux of the solution as a new variable
$q = \sigma_\alpha(\nabla u, u)$ and inverting $\sigma_\alpha$ in its
first argument when possible, so that $\nabla u = \rho_\alpha(q, u)$,
problem~\eqref{eq:4} becomes the first-order system
\begin{subequations}
  \label{eq:4-mixed}
  \begin{align}
    \label{eq:4-mixed-a}
    \rho_\alpha(q, u) - \nabla u &= 0, \quad\text{in }\om, \\
    \label{eq:4-mixed-b}
    \operatorname{div} q + \gamma_\alpha(u) &= f, \quad\text{in }\om, \\
    u &= 0, \quad \text{on } \partial \om.
  \end{align}
\end{subequations}
When computing with the primal formulation~\eqref{eq:4} we need
$\sigma_\alpha$ explicitly (as seen in the NFOSLS loss of
\S\ref{ssec:nfosls-diffusion}).  In contrast, when computing with the
dual formulation~\eqref{eq:4-mixed}, we need $\rho_\alpha$ explicitly
(as seen in the NDPG loss of~\S\ref{ssec:ndpg-diffusion}).
The main results of this section are Theorems~\ref{thm:vc-NFOSLS-example} and~\ref{thm:vc-NDPG-example}.

\subsection{A model class of quasilinear problems}
\label{ssec:sigma-class}

We isolate a class of nonlinearities for further study where the
assumptions of \S\ref{sec:NDPG} can be verified (leaving the parameter set $\cl Q$
unspecified until the examples).  Throughout, the
symmetric Jacobian matrix
$\partial_p \sigma_\alpha(p,s) \in \bb R^{d\times d}$ and the vector
$\partial_s\sigma_\alpha(p,s) \in \bb R^d$ denote the partial
derivatives of $\sigma_\alpha$ with respect to its first (vector) and
second (scalar) arguments.  As already stipulated
in~\S\ref{sec:vc-verification}, and used throughout the paper,
$\Ho^1$ carries the norm
$ \| w \|_{\Ho^1} := \| \nabla w \|_{L_2}, $ which is
equivalent to the full $H^1(\om)$ norm due to the Friedrichs inequality
\begin{gather}
  \label{eq:Friedrichs-std}
  c_P \| w \|_{L_2}^2
  \le  \|\nabla w \|_{L_2}^2.
\end{gather}

\begin{hypothesis}
  \label{hyp:sigma}
  There are constants $0 < \mu_1 \le \mu_2 < \infty$,
  $\lambda \ge 0$, and $C_\gamma \ge C_\gamma^- \ge 0$ such that, for
  every $\alpha \in \cQ$, the functions
  $\sigma_\alpha \in C^1(\bb R^d\times\bb R; \bb R^d)$ and
  $\gamma_\alpha \in C^1(\bb R)$ satisfy, for all
  $(p,s) \in \bb R^d\times\bb R$ and all $\xi \in \bb R^d$,
  \begin{subequations}
    \label{eq:sigma-asm}
    \begin{align}
      \label{eq:Q1}
      &\mu_1 |\xi|^2
      \le -\xi \cdot \partial_p\sigma_\alpha(p,s)\, \xi
        \le \mu_2 |\xi|^2,
      \\
      \label{eq:Q2}
      &| \partial_s \sigma_\alpha(p,s) |
      \le \lambda,
      \\
      \label{eq:Q3}
      &|\gamma_\alpha'(s)|
      \le C_\gamma,
        \quad
        \gamma_\alpha'(s)
        \ge -C_\gamma^-,
      \\
      \label{eq:Q4}
      &c_0  := \mu_1
      - \frac{\lambda}{\sqrt{c_P}}
      - \frac{C_\gamma^-}{c_P}
       \;>\; 0 .
    \end{align}
  \end{subequations}
\end{hypothesis}

Condition~\eqref{eq:Q1} is the familiar uniform monotonicity and Lipschitz
requirement on the flux, expressed through its derivative.
Conditions~\eqref{eq:Q2}--\eqref{eq:Q4} restrict
lower-order terms. We think of~\eqref{eq:Q4}
as a {\em smallness condition}
on the  lower-order nonlinearities in relation to
the diffusion coefficient.
Note  that~\eqref{eq:Q4} is {vacuous} whenever
$\lambda = C_\gamma^- = 0$, i.e., whenever $\sigma_\alpha$ does not
depend on its second argument and $\gamma_\alpha$ is nondecreasing.
The condition
only uses the {\em negative} part of $\gamma_\alpha'$: often
reaction terms have nonnegative
$\gamma_\alpha'(s)$, in which case the condition holds with~$C_\gamma^-=0$.
Two consequences of Hypothesis~\ref{hyp:sigma} are used repeatedly.
The first is that the inverse flux $\rho_\alpha$
of~\eqref{eq:4-mixed-a} exists and is as regular as $\sigma_\alpha$.

\begin{lemma}[The inverse flux]
  \label{lem:rho-exists}
  Let Hypothesis~\ref{hyp:sigma} hold.  Then, for each fixed
  $s \in \bb R$, the map $p \mapsto \sigma_\alpha(p,s)$ is a
  $C^1$-isomorphism of $\bb R^d$ onto itself.  Its inverse
  $\rho_\alpha(\cdot, s)$ satisfying
  \begin{equation}
    \label{eq:rho-defn}
    \sigma_\alpha\bigl( \rho_\alpha(q,s), s\bigr) = q,
    \qquad (q,s) \in \bb R^d \times \bb R,
  \end{equation}
  is $C^1$ jointly in $(q,s)$, with
  \begin{equation}
    \label{eq:rho-derivs}
    \partial_q \rho_\alpha(q,s)
    = \bigl[ \partial_p\sigma_\alpha(\rho_\alpha(q,s), s) \bigr]^{-1},
    \qquad
    \partial_s\rho_\alpha(q,s)
    = -\,\partial_q\rho_\alpha(q,s)\,
    \partial_s \sigma_\alpha(\rho_\alpha(q,s), s),
  \end{equation}
  and consequently
  \begin{equation}
    \label{eq:rho-bounds}
    \mu_2^{-1} |\xi|^2
    \le - \xi\cdot \partial_q\rho_\alpha(q,s)\, \xi
    \le \mu_1^{-1} |\xi|^2,
    \qquad
    |\partial_s\rho_\alpha(q,s)| \le \frac{\lambda}{\mu_1}.
  \end{equation}
\end{lemma}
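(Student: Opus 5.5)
The plan is to fix $s\in\bb R$ and study the map $F_s := -\sigma_\alpha(\cdot,s):\bb R^d\to\bb R^d$, which is $C^1$ with symmetric Jacobian $J(p) := -\partial_p\sigma_\alpha(p,s)$. By~\eqref{eq:Q1}, $J(p)$ has spectrum in $[\mu_1,\mu_2]$ for every $p$.

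\emph{Step 1: bijectivity.} Applying the fundamental theorem of calculus along the segment from $p_2$ to $p_1$ gives
\[
  (F_s(p_1)-F_s(p_2))\cdot(p_1-p_2) = \int_0^1 (p_1-p_2)\cdot J\bigl(p_2+t(p_1-p_2)\bigr)(p_1-p_2)\,dt \ge \mu_1|p_1-p_2|^2 .
\]
Hence $F_s$ is strongly monotone, and in particular injective. It is also Lipschitz with constant $\mu_2$, since $\|J\|\le\mu_2$ for a symmetric matrix with spectrum in $[\mu_1,\mu_2]$. Surjectivity will follow from the Browder--Minty theorem in finite dimensions, because a continuous, coercive, monotone map on $\bb R^d$ is onto. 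Alternatively, it follows from the Hadamard global inverse function theorem: $F_s$ is proper because $|F_s(p)-F_s(0)|\ge\mu_1|p|$, and it is a local diffeomorphism because $J$ is invertible. Either route shows that $p\mapsto\sigma_\alpha(p,s)$ is a bijection of $\bb R^d$. Its inverse is $C^1$ by the local inverse function theorem, since $\partial_p\sigma_\alpha$ is nonsingular everywhere. This defines $\rho_\alpha(\cdot,s)$ satisfying~\eqref{eq:rho-defn}.

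\emph{Step 2: joint regularity and the derivative formulas.} I will apply the implicit function theorem to $\Phi(p,q,s) := \sigma_\alpha(p,s)-q$, which is $C^1$ on $\bb R^d\times\bb R^d\times\bb R$ and has $\partial_p\Phi = \partial_p\sigma_\alpha$ invertible. Near any point $(q_0,s_0)$ this yields a $C^1$ local solution $p(q,s)$. By the global uniqueness from Step 1, this local solution coincides with $\rho_\alpha(q,s)$, so $\rho_\alpha$ is jointly $C^1$. Differentiating $\sigma_\alpha(\rho_\alpha(q,s),s)=q$ with the chain rule in $q$ gives $\partial_p\sigma_\alpha\,\partial_q\rho_\alpha = I$. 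Differentiating in $s$ gives $\partial_p\sigma_\alpha\,\partial_s\rho_\alpha + \partial_s\sigma_\alpha = 0$. Together these are exactly~\eqref{eq:rho-derivs}.

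\emph{Step 3: the bounds~\eqref{eq:rho-bounds}.} Put $A := -\partial_p\sigma_\alpha(\rho_\alpha(q,s),s)$, which is symmetric positive definite with eigenvalues in $[\mu_1,\mu_2]$. Then $-\partial_q\rho_\alpha = A^{-1}$ is symmetric with eigenvalues in $[\mu_2^{-1},\mu_1^{-1}]$, and the Rayleigh quotient gives the first pair of inequalities. For the second, I will write $\partial_s\rho_\alpha = A^{-1}\partial_s\sigma_\alpha$; the sign is immaterial for the norm. Since $\|A^{-1}\|_2\le\mu_1^{-1}$ in the spectral norm, and $|\partial_s\sigma_\alpha|\le\lambda$ by~\eqref{eq:Q2}, we get $|\partial_s\rho_\alpha|\le\lambda/\mu_1$.

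The only genuinely nonlocal point is the global surjectivity in Step 1. I will settle it with the strong monotonicity plus Browder--Minty argument, which is the standard and cleanest route. Everything else is the inverse and implicit function theorems followed by linear algebra.
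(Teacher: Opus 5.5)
Your proposal is correct and follows essentially the same route as the paper: strong monotonicity and Lipschitz continuity of $p\mapsto-\sigma_\alpha(p,s)$ via the fundamental theorem of calculus, Browder--Minty for bijectivity, the inverse and implicit function theorems for the (joint) $C^1$ regularity, differentiation of \eqref{eq:rho-defn} to get \eqref{eq:rho-derivs}, and spectral bounds to get \eqref{eq:rho-bounds}. Your explicit step of identifying the local implicit-function solution with $\rho_\alpha$ through global injectivity is a detail the paper leaves implicit.
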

\begin{proof}
  Using the FTC along the segment joining
  two arbitrary points in $\bb R^d$, we find that the map
  $p \mapsto -\sigma_\alpha(p,s)$ is strongly monotone with constant
  $\mu_1$ and Lipschitz with constant $\mu_2$, by~\eqref{eq:Q1}.  So
  the map is a bijection of $\bb R^d$, by the Browder--Minty
  theorem~\cite{Zeidl90}.  Since $\sigma_\alpha$ is $C^1$ and the
  Jacobian matrix $\partial_p\sigma_\alpha$ is everywhere invertible
  by~\eqref{eq:Q1}, the inverse function theorem shows that the
  inverse $\rho_\alpha(\cdot,s)$ is also $C^1$. The implicit function
  theorem applied to $F(p, q, s) := \sigma_\alpha(p, s) -q =0$, a
  restatement of~\eqref{eq:rho-defn}, shows that the explicit solution
  $p = \rho_\alpha(q, s)$ is jointly $C^1$ as a function of both
  $q \in \bb R^d$ and $s \in \bb  R$.
  Differentiating~\eqref{eq:rho-defn} with respect to $q$, we
  obtain the first equality of~\eqref{eq:rho-derivs}, while
  differentiating it with respect to $s$ gives the second equality
  of~\eqref{eq:rho-derivs}.
  The
  bounds~\eqref{eq:rho-bounds} follow from~\eqref{eq:rho-derivs},
  \eqref{eq:Q1}, and~\eqref{eq:Q2}.
\end{proof}

The second consequence is the unique solvability of~\eqref{eq:4},
which is where the smallness condition~\eqref{eq:Q4} is used.  Its weak
form  finds $u \in \Ho^1$ satisfying
\begin{equation}
  \label{eq:weak-quasilinear}
  - \bigl(\sigma_\alpha(\nabla u, u), \nabla v\bigr)_\om
  + \bigl( \gamma_\alpha(u), v \bigr)_\om
  = (f,v)_\om,
  \qquad v \in \Ho^1.
\end{equation}
Here and throughout $(\cdot, \cdot)_{\om}$ denotes the integral over
$\om$ of the appropriate product of its arguments. In the next result,
we refer to a triple of measurable functions
$S : \om \to \bb R^{d\times d}$, $\tau : \om \to \bb R^d$ and
$c : \om \to \bb R$ as an {\em admissible triple} if, for a.e.~$x \in \om$, $S(x)$
is symmetric with spectrum in $[\mu_1,\mu_2]$, $|\tau(x)| \le \lambda$,
and $-C_\gamma^- \le c(x) \le C_\gamma$.

\begin{lemma}[Well-posedness of~\eqref{eq:weak-quasilinear}]
  \label{lem:quasilinear-wellposed}
  Let Hypothesis~\ref{hyp:sigma} hold and $f \in L_2$.  Then, for
  every $\alpha \in \cQ$, problem~\eqref{eq:weak-quasilinear} has a
  unique solution $u = u(\alpha) \in \Ho^1$.  Moreover, for every
  admissible triple $(S, \tau, c)$, the bilinear form
  \begin{equation}
    \label{eq:frozen-form}
    B_{S,\tau,c}(z, v)
    := (S \nabla z, \nabla v)_\om
    - (\tau\, z, \nabla v)_\om
    + (c\, z, v)_\om
  \end{equation}
  and its adjoint  $(z,v) \mapsto B_{S,\tau,c}(v,z)$ are bounded on
  $\Ho^1$ with constant
  $\mu_2 + \lambda c_P^{-1/2} + C_\gamma c_P^{-1}$ and coercive with
  constant $c_0$ of~\eqref{eq:Q4}.
\end{lemma}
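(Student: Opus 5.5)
The proof splits into two parts. First come the bounds on the frozen form, which are pure algebra with Cauchy--Schwarz and~\eqref{eq:Friedrichs-std}. Then comes existence and uniqueness, which I would obtain from Browder--Minty applied to the weak operator.

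\emph{Boundedness and coercivity of the frozen form.} For boundedness I estimate the three terms of~\eqref{eq:frozen-form} separately. The first is bounded by $\mu_2\|\nabla z\|\,\|\nabla v\|$. The second is bounded by $\lambda\|z\|_{L_2}\|\nabla v\| \le \lambda c_P^{-1/2}\|\nabla z\|\,\|\nabla v\|$. The third is bounded by $C_\gamma\|z\|\,\|v\| \le C_\gamma c_P^{-1}\|\nabla z\|\,\|\nabla v\|$. These estimates are symmetric in $z$ and $v$, so the same constant serves for the adjoint.

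For coercivity I set $v=z$ and bound each term from below:
\[
B_{S,\tau,c}(z,z)\ge \mu_1\|\nabla z\|^2-\lambda\|z\|\,\|\nabla z\|-C_\gamma^-\|z\|^2\ge \Bigl(\mu_1-\tfrac{\lambda}{\sqrt{c_P}}-\tfrac{C_\gamma^-}{c_P}\Bigr)\|\nabla z\|^2=c_0\|\nabla z\|^2 .
\]
The adjoint has the same diagonal, so it inherits the same coercivity constant.

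\emph{Existence and uniqueness.} Define $A:\Ho^1\to H^{-1}$ by
\[
\langle A(w),v\rangle=-(\sigma_\alpha(\nabla w,w),\nabla v)+(\gamma_\alpha(w),v).
\]
For $w_0,w_1\in\Ho^1$, set $z=w_1-w_0$ and $w_t=w_0+tz$. Pointwise, the FTC in $t$ gives
\[
-\sigma_\alpha(\nabla w_1,w_1)+\sigma_\alpha(\nabla w_0,w_0)=S\nabla z-\tau z,
\qquad
\gamma_\alpha(w_1)-\gamma_\alpha(w_0)=c\,z,
\]
where
\[
S(x)=-\int_0^1\partial_p\sigma_\alpha(\nabla w_t,w_t)\,dt,\quad
\tau(x)=\int_0^1\partial_s\sigma_\alpha\,dt,\quad
c(x)=\int_0^1\gamma_\alpha'(w_t)\,dt .
\]
By Hypothesis~\ref{hyp:sigma}, $(S,\tau,c)$ is an admissible triple: averages of symmetric matrices with spectrum in $[\mu_1,\mu_2]$ keep that spectrum, and the bounds on $\tau$ and $c$ survive averaging. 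Measurability follows from continuity of the integrands. Hence
\[
\langle A(w_1)-A(w_0),v\rangle=B_{S,\tau,c}(z,v).
\]
The first part then shows that $A$ is strongly monotone with constant $c_0$ and Lipschitz with the stated bound. The Browder--Minty theorem (or Zarantonello's contraction argument) gives a unique $u$ with $A(u)=f$ in $H^{-1}$. Uniqueness also follows directly from strong monotonicity.

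The one step needing care is confirming that $A$ maps $\Ho^1$ into $H^{-1}$ at all. This requires $\sigma_\alpha(\nabla w,w)\in L_2$ and $\gamma_\alpha(w)\in L_2$, which follow from the linear growth implied by the derivative bounds: for instance $|\sigma_\alpha(p,s)|\le|\sigma_\alpha(0,0)|+\mu_2|p|+\lambda|s|$. Everything else is routine.
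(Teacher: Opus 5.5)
Your proposal is correct and follows the paper's proof essentially step for step. It uses the same Cauchy--Schwarz/Friedrichs bounds for boundedness and coercivity of $B_{S,\tau,c}$, the same FTC-along-the-segment identity $\langle A(w_1)-A(w_0),v\rangle = B_{S,\tau,c}(w_1-w_0,v)$ with admissible averaged coefficients, and the same appeal to Browder--Minty. Your explicit linear-growth check that $A$ maps $\Ho^1$ into $H^{-1}$ is a small addition that the paper only makes later, in the proof of Proposition~\ref{prop:nfosls-verif}.
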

\begin{proof}
  For $z \in \Ho^1$, the Cauchy--Schwarz and
  Friedrichs~\eqref{eq:Friedrichs-std} inequalities give
  \[
    B_{S,\tau,c}(z,z)
    \ge \mu_1 \|\nabla z\|^2_{L_2}
    - \lambda \|z\|_{L_2} \|\nabla z\|_{L_2}
    - C_\gamma^- \|z\|^2_{L_2}
    \ge c_0 \|\nabla z\|^2_{L_2},
  \]
  and the same bound holds for the adjoint form, since
  $B_{S,\tau,c}(z,z)$ is unchanged when its arguments are swapped.  Boundedness is
  immediate from the stated bounds on $S, \tau, c$
  and~\eqref{eq:Friedrichs-std}.

  Now define $N_\alpha : \Ho^1 \to H^{-1}$ by letting
  $\ip{N_\alpha w, v}$ be the left-hand side
  of~\eqref{eq:weak-quasilinear}.
  Given $w^0, w^1 \in \Ho^1$,
  put $e = w^0 - w^1$ and $w^t := w^0 - t\, e$, so that $w^t$ traverses
  the segment from $w^0$ to $w^1$ as $t$ runs from $0$ to $1$.  By the
  chain rule,
  \[\frac{d}{dt} \sigma_\alpha(\nabla w^t, w^t) =
    -\,\partial_p\sigma_\alpha(\nabla w^t, w^t)  \nabla e
    - \partial_s \sigma_\alpha(\nabla w^t, w^t) e,
  \]
  and similarly $\frac{d}{dt}\gamma_\alpha(w^t) = -\gamma_\alpha'(w^t)\, e$.
  Since $w^0$ and $w^1$ are the values of $w^t$ at $t=0$ and $t=1$,
  the FTC applied to $t \mapsto \sigma_\alpha(\nabla w^t, w^t)$ and to
  $t \mapsto \gamma_\alpha(w^t)$, and a change of sign, give
  $\sigma_\alpha(\nabla w^0, w^0) - \sigma_\alpha(\nabla w^1, w^1) =
  -\bar S \nabla e + \bar\tau\, e$ and
  $\gamma_\alpha(w^0) - \gamma_\alpha(w^1) = \bar c\, e$, where
  \begin{equation}
    \label{eq:means}
    \bar S := \int_0^1 \bigl(-\partial_p\sigma_\alpha\bigr)
    (\nabla w^t, w^t)\, dt,
    \quad
    \bar\tau := \int_0^1 \partial_s\sigma_\alpha (\nabla w^t, w^t)\, dt,
    \quad
    \bar c := \int_0^1 \gamma_\alpha'(w^t)\, dt,
  \end{equation}
  are admissible, by~\eqref{eq:Q1}--\eqref{eq:Q3}. Hence $
    \ip{N_\alpha w^0 - N_\alpha w^1, v}
    = B_{\bar S, \bar\tau, \bar c}(e, v),$
for all $ v \in \Ho^1.$
  Taking $v = e$ shows that $N_\alpha$ is strongly monotone with
  constant $c_0$. It  is Lipschitz by the boundedness of
  $B_{\bar S, \bar\tau, \bar c}$.
  The Browder--Minty theorem~\cite{Zeidl90} therefore yields a unique
  solution of~\eqref{eq:weak-quasilinear}.
\end{proof}

A first-order form of the  problem~\eqref{eq:frozen-form} uses
the operator
\begin{equation}
  \label{eq:frozen-operator}
  A_{S,\tau,c}(q, z)
  := \bigl( q + S \nabla z - \tau\, z, \;\; \dive q + c\, z \bigr),
  \qquad (q,z) \in \Hdiv \times \Ho^1.
\end{equation}
We will need it, as well as the following operator that stems from its adjoint,
\begin{equation}
  \label{eq:frozen-transposed}
  T_{S,\tau,c}(r, v)
  := \bigl( -S^{-1} r - \nabla v, \;\;
  \dive r + \tau\cdot S^{-1} r + c\, v \bigr),
  \qquad (r, v) \in \Hdiv \times \Ho^1,
\end{equation}
while performing linearizations.
    Let $\| (q,z) \|^2_{\Hdiv\times\Ho^1}
    := \|q\|^2_{L_2} + \|\dive q\|^2_{L_2} + \|\nabla z\|^2_{L_2}.$

\begin{lemma}
  \label{lem:frozen-operator}
  Let Hypothesis~\ref{hyp:sigma} hold and let $(S, \tau, c)$ be an
  admissible triple.  Then both $A_{S,\tau,c}$ and $T_{S,\tau,c}$ map
  $\Hdiv\times\Ho^1$ bijectively onto $L_2^d \times L_2$ and their
  inverses are bounded by constants depending only on $\mu_1$,
  $\mu_2$, $\lambda$, $C_\gamma$, $c_0$ and $c_P$ (and the bounds are
  uniform over admissible triples).  Moreover, for $A_{S,\tau,c}$, one
  may take
  \begin{equation}
    \label{eq:frozen-two-sided}
    c_A\, \| (q,z) \|_{\Hdiv\times\Ho^1}
    \;\le\; \bigl\| A_{S,\tau,c}(q,z) \bigr\|_{L_2}
    \;\le\; C_A\, \| (q,z) \|_{\Hdiv\times\Ho^1},
    \qquad
  \end{equation}
  for all $(q,z) \in \Hdiv\times\Ho^1$, and, for $T_{S,\tau,c}$,
  \begin{equation}
    \label{eq:frozen-transposed-two-sided}
    c_T\, \| (r,v) \|_{\Hdiv\times\Ho^1}
    \;\le\; \bigl\| T_{S,\tau,c}(r,v) \bigr\|_{L_2}
    \;\le\; C_T\, \| (r,v) \|_{\Hdiv\times\Ho^1}
  \end{equation}
  for all $(r,v) \in \Hdiv\times\Ho^1$, with
  \begin{equation}
    \label{eq:frozen-constants}
    \begin{aligned}
      C_A^2 & = 2 \max \Bigl( 1, \;
      \Bigl( \mu_2 + \tfrac{\lambda}{c_P^{1/2}} \Bigr)^2
      + \tfrac{C_\gamma^2}{c_P} \Bigr),
      \\
      c_A^{-2} & = \Bigl( 1 + c_P^{-1} \Bigr)
      \left[
        \Big( 1 + \frac{\mu_2 + \lambda\, c_P^{-1/2}}{c_0} \Big)^2
        + \Bigl( c_P^{1/2} + \frac{C_\gamma}{c_P^{1/2}\,c_0} \Bigr)^2
        + \frac{1}{c_0^2}
      \right],
      \\
      C_T^2 & = 2 + \frac{1 + \lambda^2}{\mu_1^2}
      + \frac{C_\gamma^2}{c_P},
      \\
      c_T^{-2} & = \mu_2^2 \Bigl( 1 + \frac{C_1}{c_0} \Bigr)^2
      + \left[ (1+\lambda^2)^{1/2}
        + \bigl( \lambda + C_\gamma\, c_P^{-1/2} \bigr) \frac{C_1}{c_0}
      \right]^2
      + \frac{C_1^2}{c_0^2},
    \end{aligned}
  \end{equation}
  where
  $C_1^2 := \bigl( \mu_2 + \lambda\, c_P^{-1/2} \bigr)^2 + c_P^{-1} .$
\end{lemma}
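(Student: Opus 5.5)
The plan is to prove the two-sided bounds directly and obtain bijectivity as a consequence. The upper bounds in \eqref{eq:frozen-two-sided} and \eqref{eq:frozen-transposed-two-sided} are routine. For $A_{S,\tau,c}$, I will bound the first component by $\|q\|_{L_2} + (\mu_2 + \lambda c_P^{-1/2})\|\nabla z\|_{L_2}$, using $|S|\le\mu_2$, $|\tau|\le\lambda$ and \eqref{eq:Friedrichs-std}. The second component is bounded by $\|\dive q\|_{L_2} + C_\gamma c_P^{-1/2}\|\nabla z\|_{L_2}$. Squaring with $(a+b)^2\le 2a^2+2b^2$ gives $C_A^2$. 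For $T_{S,\tau,c}$ the same steps apply, now with $|S^{-1}|\le\mu_1^{-1}$ and $|\tau\cdot S^{-1} r|\le \lambda\mu_1^{-1}|r|$, which yields $C_T^2$.

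For the lower bound on $A$, set $(g_1,g_2) = A_{S,\tau,c}(q,z)$, so that $q = g_1 - S\nabla z + \tau z$. I will test $\dive q + cz = g_2$ with $z\in\Ho^1$ and integrate by parts. This gives $B_{S,\tau,c}(z,z) = (g_1,\nabla z)_\om + (g_2,z)_\om$, where $B_{S,\tau,c}$ is the form \eqref{eq:frozen-form}. The coercivity in Lemma~\ref{lem:quasilinear-wellposed} then yields $\|\nabla z\|_{L_2}\le c_0^{-1}(\|g_1\|_{L_2} + c_P^{-1/2}\|g_2\|_{L_2})$. Next I bound $\|q\|_{L_2}\le \|g_1\|_{L_2} + (\mu_2+\lambda c_P^{-1/2})\|\nabla z\|_{L_2}$, and $\|\dive q\|_{L_2} \le \|g_2\|_{L_2} + C_\gamma c_P^{-1/2}\|\nabla z\|_{L_2}$. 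Collecting the coefficients of $\|g_1\|$ and $\|g_2\|$ and applying Cauchy--Schwarz should reproduce a bound of the form $c_A^{-2}$. The stated expression, with its factor $(1+c_P^{-1})$, comes from weighting the terms against $\|g_1\|^2 + c_P\|g_2\|^2$ or similar; I would match the bookkeeping to the stated constant, which is routine. For surjectivity, given $(g_1,g_2)\in L_2^d\times L_2$, I solve for $z\in\Ho^1$ from $B_{S,\tau,c}(z,v) = (g_1,\nabla v)_\om + (g_2,v)_\om$ by Lax--Milgram. I then define $q := g_1 - S\nabla z + \tau z\in L_2^d$. The variational equation says exactly that $\dive q = g_2 - cz\in L_2$ in distributions, so $q\in\Hdiv$ and $A(q,z) = (g_1,g_2)$.

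For $T_{S,\tau,c}$, set $(g_1,g_2) = T(r,v)$. Then $r = -S(\nabla v + g_1)$, and substituting gives $\dive r = g_2 + \tau\cdot(\nabla v + g_1) - cv$. Testing with $w\in\Ho^1$ yields $(S\nabla v,\nabla w)_\om + (\tau\cdot\nabla v, w)_\om + (cv,w)_\om = -(Sg_1,\nabla w)_\om - (g_2 + \tau\cdot g_1, w)_\om$. I must check that the left side is the adjoint form $B_{S,\tau,c}(w,v)$. That form contains $-(\tau w,\nabla v)_\om$, so the middle term matches only up to sign and an integration by parts. I expect this sign and adjoint bookkeeping to be the main obstacle: the $\tau$-term of $T$ has to be arranged so that the resulting form is exactly the adjoint of \eqref{eq:frozen-form}, possibly after replacing $\tau$ by $-\tau$, which is also admissible.

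Once this is settled, the adjoint coercivity from Lemma~\ref{lem:quasilinear-wellposed} gives $\|\nabla v\|_{L_2}\le c_0^{-1}C_1\|(g_1,g_2)\|_{L_2}$. The constant $C_1$ arises from $\|Sg_1\|_{L_2}\le\mu_2\|g_1\|_{L_2}$ and from $\|g_2+\tau\cdot g_1\|_{L_2}$ controlled by $(1+\lambda^2)^{1/2}$ times the pair norm, together with the factor $c_P^{-1/2}$. Then $\|r\|_{L_2}\le \mu_2(\|g_1\|_{L_2} + \|\nabla v\|_{L_2})$, and $\|\dive r\|_{L_2}$ is bounded by $(1+\lambda^2)^{1/2}\|(g_1,g_2)\|_{L_2} + (\lambda + C_\gamma c_P^{-1/2})\|\nabla v\|_{L_2}$. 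Summing the squares gives exactly the three terms in $c_T^{-2}$. Surjectivity follows as before: solve the adjoint problem for $v$ by Lax--Milgram and define $r := -S(\nabla v + g_1)$. Finally, all the constants involve only the bounds defining admissible triples, so they are uniform over such triples.
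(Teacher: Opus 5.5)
Your overall route is the paper's. You eliminate the flux through the first equation, reduce to a scalar problem for $B_{S,\tau,c}$ (for $A_{S,\tau,c}$) or its transpose (for $T_{S,\tau,c}$), apply Lax--Milgram with the coercivity constant $c_0$ of Lemma~\ref{lem:quasilinear-wellposed}, and back-substitute. Your treatment of $A_{S,\tau,c}$ is correct.

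The ``main obstacle'' you flag for $T_{S,\tau,c}$ is, however, only a sign error in your integration by parts, and your displayed identity is false as written. With $r=-S(\nabla v+g_1)$ and $w\in\Ho^1$, one has $(\dive r,w)_\om=-(r,\nabla w)_\om=(S\nabla v+Sg_1,\nabla w)_\om$. Inserting $\dive r=g_2+\tau\cdot(\nabla v+g_1)-cv$ gives
\[
  (S\nabla v,\nabla w)_\om-(\tau\cdot\nabla v,w)_\om+(cv,w)_\om
  =(g_2+\tau\cdot g_1,w)_\om-(Sg_1,\nabla w)_\om,
  \qquad w\in\Ho^1 .
\]
Since $S$ is symmetric and $(\tau\cdot\nabla v)\,w=(\tau w)\cdot\nabla v$ pointwise, the left side is exactly $B_{S,\tau,c}(w,v)$. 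So you need neither an integration by parts of the $\tau$-term nor a change $\tau\mapsto-\tau$.

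For the bound on $\|\nabla v\|_{L_2}$ your slip is harmless: your left side equals $B_{S,-\tau,c}(w,v)$, which is also coercive, and the norms on the right are unchanged. For surjectivity it is not harmless. If you solve your displayed equation for $v$ and set $r:=-S(\nabla v+g_1)$, you get $\dive r=-g_2-\tau\cdot(\nabla v+g_1)-cv$, so $T_{S,\tau,c}(r,v)\ne(g_1,g_2)$ in general. You must use the identity above.

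Three smaller bookkeeping points matter if you want the stated constants.

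\emph{The constant $C_1$.} Group the $g_1$-terms before applying Cauchy--Schwarz:
\[
  |(g_2+\tau\cdot g_1,w)_\om-(Sg_1,\nabla w)_\om|
  \le\bigl[(\mu_2+\lambda c_P^{-1/2})\|g_1\|_{L_2}+c_P^{-1/2}\|g_2\|_{L_2}\bigr]\|\nabla w\|_{L_2}
  \le C_1\|(g_1,g_2)\|_{L_2}\|\nabla w\|_{L_2}.
\]
If you first bound $\|g_2+\tau\cdot g_1\|_{L_2}$ by $(1+\lambda^2)^{1/2}\|(g_1,g_2)\|_{L_2}$, as you describe, you get the larger constant $\mu_2+c_P^{-1/2}(1+\lambda^2)^{1/2}$.

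\emph{The constant $C_T$.} The inequality $(a+b)^2\le2a^2+2b^2$ that produces $C_A^2$ does not produce $C_T^2$. For $\lambda=C_\gamma=0$ it gives $\max(2,2\mu_1^{-2})$, which exceeds $C_T^2=2+\mu_1^{-2}$ when $\mu_1<2^{-1/2}$. Instead apply Cauchy--Schwarz in $\bb R^2$ and $\bb R^3$ to the two components separately; this gives $(1+\mu_1^{-2})+(1+\lambda^2\mu_1^{-2}+C_\gamma^2c_P^{-1})=C_T^2$.

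\emph{The constant $c_A$.} Any sensible grouping reproduces or improves the stated value. The paper uses $\|g\|_\star:=\|g_1\|_{L_2}+c_P^{-1/2}\|g_2\|_{L_2}\le(1+c_P^{-1})^{1/2}\|g\|_{L_2}$, together with $\|g_1\|_{L_2}\le\|g\|_\star$ and $\|g_2\|_{L_2}\le c_P^{1/2}\|g\|_\star$.
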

\begin{proof}
  For an $r \in \Hdiv$ and $v \in \Ho^1$, put
   $g = (g_1, g_2) = T_{S,\tau,c}(r, v)$, which is a system of two equations.
  The  first equation
  forces $r = -S(\nabla v + g_1)$. Substituting this into the
  second shows that $v$ must satisfy
  \[
    B_{S,\tau,c}(\zeta, v)
    = (g_2, \zeta)_\om + (\tau\cdot g_1, \zeta)_\om
    - (S g_1, \nabla \zeta)_\om,
    \qquad \text{ for all } \zeta \in \Ho^1.
  \]
  By Lemma~\ref{lem:quasilinear-wellposed} this
  transposed form is bounded and coercive with constant $c_0$, so the
  Lax--Milgram theorem provides a unique $v \in \Ho^1$ with
  $\|\nabla v\|_{L_2} \le c_0^{-1} C_1 \|(g_1,g_2)\|_{L_2}$ (with
  $C_1$ as stated).
  Then
  $r = -S(\nabla v + g_1) \in L_2^d$ and, by the second component,
  $\dive r = g_2 + \tau\cdot g_1 + \tau\cdot\nabla v - c\, v \in L_2$,
  so $r \in \Hdiv$ and $T_{S,\tau,c}$ is bijective.  Moreover
  $\|r\|_{L_2} \le \mu_2 ( \|g_1\|_{L_2} + \|\nabla v\|_{L_2})$ and
  $\|\dive r\|_{L_2} \le (1+\lambda^2)^{1/2}\|(g_1,g_2)\|_{L_2} +
  (\lambda + C_\gamma c_P^{-1/2}) \|\nabla v\|_{L_2}$,
  by~\eqref{eq:Friedrichs-std}. The  squares of these two bounds and of
  the bound on $\|\nabla v\|_{L_2}$ sum to the $c_T^{-2}$
  of~\eqref{eq:frozen-constants}, which is the lower bound
  in~\eqref{eq:frozen-transposed-two-sided}.  Its upper bound follows
  from $\|S^{-1}\| \le \mu_1^{-1}$, the bounds on $\tau$ and $c$,
  and~\eqref{eq:Friedrichs-std}.

  A similar argument also shows the bijectivity of
  $A_{S,\tau,c}$.
  The first equation of $A_{S,\tau,c}(q,z) = (g_1,g_2)$ gives
  $q = g_1 - S\nabla z + \tau\, z$, and substituting it into the
  second shows that $z$ must solve
  \begin{equation}
    \label{eq:BStc}
    B_{S,\tau,c}(z, \zeta)
    = (g_2, \zeta)_\om + (g_1, \nabla \zeta)_\om,
    \qquad \zeta \in \Ho^1,
  \end{equation}
  whose right-hand side is a functional on $\Ho^1$ of norm at most
  $\| g \|_\star := \|g_1\|_{L_2} + c_P^{-1/2}\|g_2\|_{L_2}$, by~\eqref{eq:Friedrichs-std},
  so the argument proceeds with minimal changes.
  For the lower bound in~\eqref{eq:frozen-two-sided},
  put $\zeta = z$ in \eqref{eq:BStc} to get
  \[
    B_{S,\tau,c}(z, z)
    = (S \nabla z, \nabla z)_\om - (\tau\, z, \nabla z)_\om
    + (c\, z, z)_\om
    = (g_1, \nabla z)_\om + (g_2, z)_\om
    \le \| g \|_\star\, \|\nabla z\|_{L_2},
  \]
  where~\eqref{eq:Friedrichs-std} was used.  By
  Lemma~\ref{lem:quasilinear-wellposed} the left-hand side is at least
  $c_0 \|\nabla z\|^2_{L_2}$, so
  $\|\nabla z\|_{L_2} \le \| g \|_\star/c_0$.  Now, we can bound
  $\|q\|_{L_2}$ and $\| \dive q \|_{L_2}$ using
  $q + S \nabla z - \tau\, z= g_1$ and $\dive q + c\, z = g_2$,
  respectively.  Since $\|g_1\|_{L_2} \le \| g \|_\star$ and
  $\|g_2\|_{L_2} \le c_P^{1/2} \| g \|_\star$, the three resulting
  bounds on $\|q\|_{L_2}$, $\|\dive q\|_{L_2}$ and
  $\|\nabla z\|_{L_2}$ are the multiples of $\| g \|_\star$ whose
  squares are summed in the bracket
  of~\eqref{eq:frozen-constants}, while the remaining factor
  $1 + c_P^{-1}$ there accounts for
  $\| g \|^2_\star \le (1 + c_P^{-1}) \|g\|^2_{L_2}$, by the
  Cauchy--Schwarz inequality in $\bb R^2$.  This proves the lower
  bound.
    The upper bound in~\eqref{eq:frozen-two-sided} follows from the
  bounds on $S, \tau, c$ and~\eqref{eq:Friedrichs-std}.
\end{proof}

\subsection{The NFOSLS loss function}
\label{ssec:nfosls-diffusion}

The least-squares instance of \S\ref{ssec:nfosls-abstract} is obtained
from the same first-order system~\eqref{eq:4-mixed}, but written in
the $\sigma_\alpha$ direction and tested against $L_2$ functions.
Namely, we now set
\begin{subequations}
  \label{eq:FOS-A}
  \begin{align}
    & \Ua = \Hdiv \times \Ho^1,
      \qquad
      \Va = L_2^d \times L_2,
    \\
    \label{eq:fos-norms}
    & \| (q, u) \|_{\Ua}^2 = \|q\|^2_{L_2} + \|\dive q\|^2_{L_2}
      + \|\nabla u \|_{L_2}^2,
      \qquad
      \|(r, v)\|_{\Va}^2 = \|r\|^2_{L_2} + \|v\|^2_{L_2},
    \\
    \label{eq:fos-A}
    & \Aa (q, u) = \big( q - \sigma_\alpha(\nabla u, u), \;
      \dive q + \gamma_\alpha(u) \big),
      \qquad
      \ell = (0, f).
  \end{align}
Here  $\Va$ is identified with its own dual through the $L_2$ inner
product,  the $\Rza$ in \eqref{eq:mixedcarst} is $\mathrm{id}$,  $\Vah = \Va$, and
$\fp = \mathrm{id}$ in Definition~\ref{defn:ndpg}. The  resulting
NFOSLS loss function is
\begin{equation}
  \label{eq:fos-loss}
  \Ja{(q, u)}
  = \| q - \sigma_\alpha(\nabla u, u) \|_{L_2}^2
  + \| \dive q + \gamma_\alpha(u) - f \|_{L_2}^2 ,
\end{equation}
\end{subequations}
which requires only quadrature to evaluate.
Since $\Uah$ enters neither~\eqref{eq:fos-loss} nor the analysis
below, it may be any subset of $\Ua$; a standard finite element choice,
in the notation of~\S\ref{ssec:ndpg-diffusion} below, is
\begin{equation}
  \label{eq:fos-discrete}
  \Uah = \RT_k \times U_{k+1},
  \qquad
  \RT_k := (P_k^d + x P_k) \cap \Hdiv,
  \qquad
  U_{k+1} := P_{k+1} \cap \Ho^1,
\end{equation}
but $\Uah$ may equally be a class of neural networks whose outputs lie
in $\Hdiv \times \Ho^1$.  Observe that only $\sigma_\alpha$, and not
its inverse $\rho_\alpha$, is needed to evaluate the loss.

We now verify the assumptions of \S\ref{sec:NDPG} for the
least-squares setting~\eqref{eq:FOS-A}.  Since Assumption~\ref{asm:fortin-a} is
automatic there ($\Vah = \Va$, $\fp = \mathrm{id}$), only
Assumption~\ref{asm:NDPGdiff} remains, and it holds.

\begin{proposition}[Verification of Assumption~\ref{asm:NDPGdiff} for
  the quasilinear NFOSLS setting]
  \label{prop:nfosls-verif}
  Let Hypothesis~\ref{hyp:sigma} hold.  Then the operator
  $\Aa : \Ua \to \Vap$ of~\eqref{eq:fos-A} satisfies all conditions of
  Assumption~\ref{asm:NDPGdiff} with $D = \Ua$ and
  $\delta = +\infty$.  One may take $\CdB = C_A$ and $c_* = c_A$, the
  constants of~\eqref{eq:frozen-constants}.
\end{proposition}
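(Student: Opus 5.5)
Every condition will be reduced to Lemma~\ref{lem:frozen-operator}, by showing that both the Gâteaux derivative of $\Aa$ and the difference $\Aa(q^0,w^0)-\Aa(q^1,w^1)$ are frozen operators $A_{S,\tau,c}$ with admissible triples. We take $D=\Ua$. For part~(a), let $u=u(\alpha)\in\Ho^1$ be the unique weak solution from Lemma~\ref{lem:quasilinear-wellposed} and set $q:=\sigma_\alpha(\nabla u,u)\in L_2^d$. The bound $|\sigma_\alpha(p,s)-\sigma_\alpha(0,0)|\le\mu_2|p|+\lambda|s|$, which follows from \eqref{eq:Q1}--\eqref{eq:Q2}, shows $q\in L_2^d$. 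The weak form \eqref{eq:weak-quasilinear} then says $\dive q=f-\gamma_\alpha(u)$ in the distributional sense, and the right-hand side lies in $L_2$ because $\gamma_\alpha$ is Lipschitz by \eqref{eq:Q3}. Hence $q\in\Hdiv$ and $\Aa(q,u)=\ell$, i.e.\ $\Ra(q,u)=0$.

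For part~(b), fix $(q,w)\in\Ua$ and a direction $(r,z)\in\Ua$. We claim that
\[
  d\Aa(q,w)(r,z)=\bigl(r-\partial_p\sigma_\alpha(\nabla w,w)\nabla z-\partial_s\sigma_\alpha(\nabla w,w)z,\;\dive r+\gamma_\alpha'(w)z\bigr),
\]
which is exactly $A_{S,\tau,c}(r,z)$ with $S=-\partial_p\sigma_\alpha(\nabla w,w)$, $\tau=\partial_s\sigma_\alpha(\nabla w,w)$ and $c=\gamma_\alpha'(w)$. By \eqref{eq:Q1}--\eqref{eq:Q3} this triple is admissible, so \eqref{eq:frozen-two-sided} gives $\|d\Aa(q,w)(r,z)\|_{L_2}\le C_A\|(r,z)\|_\Ua$, and we may take $\CdB=C_A$.

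Justifying the derivative formula is the main technical step, because $\sigma_\alpha$ acts as a Nemytskii operator on $L_2$. Pointwise, the difference quotient equals $\int_0^1\partial_p\sigma_\alpha(\nabla w+\theta t\nabla z,w+\theta tz)\,d\theta\,\nabla z$ plus the analogous $\partial_s$ and $\gamma'_\alpha$ terms. These integrands converge a.e.\ as $t\to0$ by continuity of the derivatives, and they are dominated by $\mu_2|\nabla z|+\lambda|z|$ (respectively $C_\gamma|z|$). Dominated convergence then gives convergence in $L_2$, which is stronger than the weak convergence required in \eqref{eq:gateaux}. Linearity and boundedness of $d\Aa(q,w)$ are immediate from the formula. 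Continuity of $t\mapsto d\Aa(\clw+t\clz)(\clz)(\clv)$ follows from the same dominated convergence argument applied at each fixed $t$.

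For part~(c), let $(q^0,w^0),(q^1,w^1)\in\Ua$ be arbitrary and set $e=w^0-w^1$. The FTC identity used in the proof of Lemma~\ref{lem:quasilinear-wellposed} gives
\[
  \Aa(q^0,w^0)-\Aa(q^1,w^1)=A_{\bar S,\bar\tau,\bar c}(q^0-q^1,e),
\]
with the averaged triple $(\bar S,\bar\tau,\bar c)$ of \eqref{eq:means}, which is admissible. The lower bound in \eqref{eq:frozen-two-sided} then yields $c_A\|(q^0,w^0)-(q^1,w^1)\|_\Ua\le\|\Aa(q^0,w^0)-\Aa(q^1,w^1)\|_{L_2}$ globally. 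Choosing $(q^0,w^0)=\clu(\alpha)$ gives \eqref{eq:c-prime} with $c_*=c_A$ and $\delta=+\infty$. All constants depend only on the constants of Hypothesis~\ref{hyp:sigma} and on $c_P$, so they are uniform in $\alpha$.
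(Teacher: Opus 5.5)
Your proposal is correct and follows the paper's proof essentially step for step. For (a) you use linear growth of $\sigma_\alpha$ and $\gamma_\alpha$ to place $q$ and $\gamma_\alpha(u)$ in $L_2$. For (b) you apply dominated convergence to the FTC representation of the difference quotients to identify $d\Aa(q,w)$ with an admissible frozen operator $A_{S,\tau,c}$, and for (c) you use the averaged triple $(\bar S,\bar\tau,\bar c)$ of \eqref{eq:means} to write $\Aa(q^0,w^0)-\Aa(q^1,w^1)=A_{\bar S,\bar\tau,\bar c}(q^0-q^1,w^0-w^1)$, with Lemma~\ref{lem:frozen-operator} supplying $C_A$ and $c_A$ in both cases.
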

\begin{proof}
  {\em Proof of} {\rm(a)}.  Let $u = u(\alpha) \in \Ho^1$ be the
  solution of~\eqref{eq:weak-quasilinear} furnished by
  Lemma~\ref{lem:quasilinear-wellposed}.
  Using the chain rule to differentiate $\sigma(t p , ts)$ with respect to $t$, the FTC gives
  \[
    \sigma(p, s) - \sigma(0, 0) =
    \int_0^1
    \Big[ (\partial_p \sigma)(tp, ts) \,p +
    (\partial_s \sigma) (tp, ts) \; s \Big] \, dt .
  \]
  Therefore,~\eqref{eq:Q1}--\eqref{eq:Q3} imply
  $|\sigma_\alpha(p,s)| \le |\sigma_\alpha(0,0)| + \mu_2|p| +
  \lambda|s|.$ Similarly, $|\gamma_\alpha(s)| \le |\gamma_\alpha(0)| +
  C_\gamma|s|$. We conclude that
  $q := \sigma_\alpha(\nabla u, u) \in L_2^d$ and
  $\gamma_\alpha(u) \in L_2$.
  Equation~\eqref{eq:weak-quasilinear} says that
  $\dive q = f - \gamma_\alpha(u)$ holds distributionally, so
  $q \in \Hdiv$ and
  $\clu := (q,u) \in \Ua$ satisfies $\Aa(\clu) = \ell$, i.e.,
  $\Ra\clu = 0$.  Since $\Aa$ is defined on all of $\Ua$, we take
  $D = \Ua$.

  {\em Proof of} {\rm(b)}.  Only $\sigma_\alpha(\nabla u, u)$ and
  $\gamma_\alpha(u)$ in~\eqref{eq:fos-A} are nonlinear, so the
  difference quotient of~\eqref{eq:gateaux}, computed in
  $\Va = L_2^d \times L_2$, takes the form
  \begin{equation}
    \label{eq:fos-dq}
    \frac{1}{t}\big[ \Aa(q + t\varsigma, \, u + t z) - \Aa(q, u)\big]
    = \bigl( \varsigma - \Delta_t\sigma, \;\;
    \dive \varsigma + \Delta_t\gamma \bigr),
  \end{equation}
  in which every term but the two involving the pointwise defined
  difference quotients
  \begin{equation}
    \label{eq:fos-dq-nonlinear}
    \Delta_t\sigma
    := \frac{\sigma_\alpha(\nabla u + t \nabla z, \, u + t z)
      - \sigma_\alpha(\nabla u, u)}{t},
    \qquad
    \Delta_t\gamma
    := \frac{\gamma_\alpha(u + t z) - \gamma_\alpha(u)}{t},
  \end{equation}
  is independent of $t$.  Both quotients are generated by $C^1$
  functions with bounded derivatives, by~\eqref{eq:Q1}--\eqref{eq:Q3}.
  Indeed, writing $p_t^\theta := \nabla u + \theta t \nabla z$ and
  $u_t^\theta := u + \theta t z$, the FTC gives, a.e.\ in $\om$,
  \begin{equation}
    \label{eq:fos-dq-ftc}
    \Delta_t\sigma
    = \int_0^1 \Big[
    \partial_p\sigma_\alpha(p_t^\theta, u_t^\theta) \, \nabla z
    + \partial_s\sigma_\alpha(p_t^\theta, u_t^\theta) \, z
    \Big] \, d\theta,
    \qquad
    \Delta_t\gamma
    = \int_0^1 \gamma_\alpha'(u_t^\theta)\, z \; d\theta .
  \end{equation}
  Therefore, by~\eqref{eq:Q1}--\eqref{eq:Q3}, the difference quotients
  are dominated, uniformly in $t$, by
  \[
    |\Delta_t\sigma| \le \mu_2 |\nabla z| + \lambda |z|,
    \qquad
    |\Delta_t\gamma| \le C_\gamma |z|,
  \]
  which are functions in $L_2$.  Letting $t \to 0$
  in~\eqref{eq:fos-dq-ftc}, the continuity of
  $\partial_p\sigma_\alpha$, $\partial_s\sigma_\alpha$ and
  $\gamma_\alpha'$ shows that
  $\Delta_t\sigma \to \partial_p\sigma_\alpha(\nabla u, u)\, \nabla z
  + \partial_s\sigma_\alpha(\nabla u, u)\, z$ and
  $\Delta_t\gamma \to \gamma_\alpha'(u)\, z$ pointwise almost
  everywhere.  The dominated convergence theorem upgrades these to
  convergence in $L_2$, so the limit in~\eqref{eq:gateaux} exists and
  \begin{equation}
    \label{eq:fos-dA}
    d\Aa(q, u)(\varsigma, z)
    = \bigl( \varsigma - \partial_p\sigma_\alpha(\nabla u,u)\, \nabla z
    - \partial_s\sigma_\alpha(\nabla u,u)\, z, \;
    \dive \varsigma + \gamma_\alpha'(u)\, z \bigr),
  \end{equation}
  for $(\varsigma,z) \in \Ua$.  The same domination and continuity
  give the continuity along segments.  Comparing~\eqref{eq:fos-dA}
  with~\eqref{eq:frozen-operator}, we see that
  $d\Aa(q,u) = A_{S,\tau,c}$ for the triple
  $S = -\partial_p\sigma_\alpha(\nabla u, u)$,
  $\tau = \partial_s\sigma_\alpha(\nabla u,u)$ and
  $c = \gamma_\alpha'(u)$, which is admissible
  by~\eqref{eq:Q1}--\eqref{eq:Q3}.  Since the norm of $\Ua$
  in~\eqref{eq:fos-norms} is the norm $\|\cdot\|_{\Hdiv\times\Ho^1}$
  of Lemma~\ref{lem:frozen-operator}, the upper bound
  in~\eqref{eq:frozen-two-sided} gives~\eqref{linvc} with
  $\CdB = C_A$.

  {\em Proof of} {\rm(c)}.  Let $\clz^i = (\varsigma^i, z^i) \in \Ua$,
  $i = 0, 1$, be arbitrary and abbreviate
  $e_\varsigma = \varsigma^0 - \varsigma^1$, $e_z = z^0 - z^1$.
  Put $z^t := z^0 - t\, e_z$ and let
  $\bar S, \bar\tau, \bar c$ be the mean
  coefficients~\eqref{eq:means} obtained by replacing $\nabla w^t$ and
  $w^t$ there by $\nabla z^t$ and $z^t$, respectively; they are
  an admissible triple in the sense of
  Lemma~\ref{lem:quasilinear-wellposed}.  By the FTC,
  $\sigma_\alpha(\nabla z^0, z^0) - \sigma_\alpha(\nabla z^1, z^1) =
  -\bar S \nabla e_z + \bar\tau\, e_z$ and
  $\gamma_\alpha(z^0) - \gamma_\alpha(z^1) = \bar c\, e_z$.
  Subtracting two copies of~\eqref{eq:fos-A},
  \begin{align*}
    \Aa(\clz^0) - \Aa(\clz^1)
    & = \bigl(\, e_\varsigma
    - \bigl[ \sigma_\alpha(\nabla z^0, z^0)
    - \sigma_\alpha(\nabla z^1, z^1) \bigr], \;\;
    \dive e_\varsigma
      + \bigl[ \gamma_\alpha(z^0) - \gamma_\alpha(z^1) \bigr] \,\bigr)
    \\
    & = \bigl( e_\varsigma + \bar S \nabla e_z - \bar\tau\, e_z, \;\;
    \dive e_\varsigma + \bar c\, e_z \bigr)
    \\
    & = A_{\bar S, \bar\tau, \bar c}(\clz^0 - \clz^1).
  \end{align*}
  The lower bound of Lemma~\ref{lem:frozen-operator} now gives
  $\|\Aa(\clz^0) - \Aa(\clz^1)\|_{\Vap} \ge c_A \|\clz^0 -
  \clz^1\|_{\Ua}$.  As this holds for every pair, it
  is~\eqref{eq:c-prime} with $c_* = c_A$ and $\delta = +\infty$, and
  it also shows that the $\clu$ of part {\rm(a)} is the
  unique element at which $\Ra\cdot$ vanishes.
\end{proof}

\begin{theorem}[Variational correctness of the NFOSLS loss for the quasilinear case]
  \label{thm:vc-NFOSLS-example}
  Let Hypothesis~\ref{hyp:sigma} hold
  and adopt the setting of
  \eqref{eq:FOS-A}.
  Then the NFOSLS loss function
  of~\eqref{eq:fos-loss} is globally variationally correct on the
    entire trial space:  the estimate
  \begin{equation}
    \label{eq:nfosls-vc-example}
    \frac{c_*^2}{2}\, \| \clu - \clz \|_{\Ua}^2
    \;\le\; \Ja \clz
    \;\le\; \CdB^2\, \| \clu - \clz \|_{\Ua}^2
  \end{equation}
  holds for any $\clz \in \Ua = \Hdiv \times \Ho^1$, any
  $\alpha \in \cQ$, and for arbitrary polynomial degree $k\ge 0$
  in~\eqref{eq:fos-discrete}. We may take $c_*=c_A$ and $\CdB=C_A$,
  the constants defined in~\eqref{eq:frozen-constants}.
\end{theorem}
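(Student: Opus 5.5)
The plan is to obtain Theorem~\ref{thm:vc-NFOSLS-example} as a direct specialization of Theorem~\ref{thm:NDPG-gen-nonlinear-dpg} to the least-squares instance of \S\ref{ssec:nfosls-abstract}. That instance needs no separate verification of Assumption~\ref{asm:fortin-a}. In the setting~\eqref{eq:FOS-A} we have $\Vah = \Va = L_2^d\times L_2$ and $\fp = \mathrm{id}$, so~\eqref{eq:Fortin} holds trivially for every $\clwh$. In addition, $\|\fp\|_{L(\Va)} = 1$ and $\osc(\ell) = 0$. Assumption~\ref{asm:NDPGdiff} is supplied by Proposition~\ref{prop:nfosls-verif}, with $D = \Ua$, $\delta = +\infty$, $\CdB = C_A$ and $c_* = c_A$, where these constants are those of~\eqref{eq:frozen-constants}.

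The key steps, in order, are as follows.
\begin{enumerate}
\item Identify the loss. Because $\Va$ is identified with its own dual through the $L_2$ inner product, the dual norm in~\eqref{eq:DPG-loss-defn} is the $L_2$ norm. Hence $\Ja\clz = \Ra\clz$ is exactly the expression~\eqref{eq:fos-loss}, as recorded in~\eqref{eq:ls-loss}.
\item Invoke Theorem~\ref{thm:NDPG-gen-nonlinear-dpg}. Since~\eqref{eq:c-prime} holds globally ($D = \Ua$, $\delta = +\infty$), the theorem allows $h_0 = +\infty$ and $\tilde D_{h_0} = \Ua$.
\item Read off the two bounds. Inserting $\|\fp\|=1$ and $\osc(\ell)=0$, the reliability bound~\eqref{eq:reliability} becomes the left inequality of~\eqref{eq:nfosls-vc-example} with $c = c_*^2/2$. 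The efficiency bound~\eqref{eq:efficiency} becomes the right inequality with $C = \CdB^2$.
\end{enumerate}

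The one point that needs care is that the bounds hold for \emph{every} $\clz \in \Ua$, not merely on some trial subset $\Uah$. I would argue this in one of two equivalent ways. The first is to apply the theorem with the trivial trial class $\Uah = \Ua$, for which the approximation property~\eqref{eq:approx'} holds trivially. The cleaner alternative is to note that Steps~2 and~3 of the proof of Theorem~\ref{thm:NDPG-gen-nonlinear-dpg} never use that $\clwh$ lies in $\Uah$, only that it lies in $S_{\delta_2}(\clu) = \Ua$. For the latter route, Step~1 needs only the FTC along segments, which lie in $D = \Ua$ automatically by convexity.

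Two remaining remarks complete the proof. First, the statement about arbitrary polynomial degree $k$ in~\eqref{eq:fos-discrete} is then immediate. Those spaces are subsets of $\Ua$, and neither the loss nor the constants depend on the trial class. Second, uniformity in $\alpha$ follows because $c_A$ and $C_A$ depend only on $\mu_1,\mu_2,\lambda,C_\gamma,C_\gamma^-,c_P$. This is Remark~\ref{rem:alpha-uniform} applied with Lemma~\ref{lem:frozen-operator}.

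I expect no genuine obstacle, because the substantive work is already contained in Proposition~\ref{prop:nfosls-verif} and Lemma~\ref{lem:frozen-operator}.
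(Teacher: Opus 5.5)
Your proposal is correct and follows the paper's own proof. Like the paper, you combine Proposition~\ref{prop:nfosls-verif} (Assumption~\ref{asm:NDPGdiff} with $D=\Ua$, $\delta=+\infty$, $c_*=c_A$, $\CdB=C_A$) with the trivial choice $\Vah=\Va$, $\fp=\mathrm{id}$, and then apply Theorem~\ref{thm:NDPG-gen-nonlinear-dpg} with $h_0=+\infty$ and $\tilde D_{h_0}=\Ua$, as recorded in~\eqref{eq:ls-vc}. Your added justification for extending the bound to all of $\Ua$ is also correct: Steps~2--3 of that proof only use $\clwh\in S_{\delta_2}(\clu)=\Ua$.
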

\begin{proof}
  Proposition~\ref{prop:nfosls-verif} verifies
  Assumption~\ref{asm:NDPGdiff} with $D = \Ua$ and $\delta = +\infty$,
  while Assumption~\ref{asm:fortin-a} holds with $\Vah = \Va$ and
  $\fp = \mathrm{id}$, so that $\|\fp\|_{L(\Va)} = 1$ and
  $\osc(\ell) = 0$.  Theorem~\ref{thm:NDPG-gen-nonlinear-dpg} then
  applies with $h_0 = +\infty$ and $\tilde D_{h_0} = \Ua$, giving the
  result (as already noted in~\eqref{eq:ls-vc}).
\end{proof}

\subsection{The NDPG loss function}
\label{ssec:ndpg-diffusion}

We turn now to the DPG case, using a shape-regular simplicial finite
element mesh $\oh$ of $\om \subset \bb R^d$.
Denote by $n$
the outward unit normals on element boundaries, and let $\tr(w)$ and
$\trn(r)$ denote element-wise trace operators characterized as
$\tr(w)|_{\partial K} = w|_{\partial K}$ and
$\trn(r)|_{\partial K} = (r \cdot n )|_{\partial K}$. By standard
trace theorems, $\tr(\cdot)$ can be applied to $w \in \Ho^1$ and
$\trn(\cdot)$ to $q \in \Hdiv$.  Let $\Hoh = \tr \Ho^1$ and
$\Hmh = \trn \Hdiv$, endowed with the quotient norms
\begin{equation}
  \label{eq:quotient-norms}
  \|\hat u\|_{{\Hoh}}
  := \inf_{u \in \text{tr}^{-1}\{\hat u\}} \|\nabla u\|_{L_2},
  \qquad
  \|\hat r_n\|_{H^{-1/2}(\partial \Omega_h)}
  := \inf_{ q \in \text{tr}_n^{-1}\{\hat r_n\}} \| q\|_{H(\div)},
\end{equation}
respectively.
In addition to the standard  spaces, we now also need the  product spaces, colloquially referred to as ``broken'' spaces
\begin{equation}
  \label{eq:13}
  \Hdivoh = \bigtimes_{K \in \oh} H(\div, K), \qquad
  H^1(\oh) = \bigtimes_{K \in \oh} H^1(K),
\end{equation}
representing piecewise $H(\div)$ and $H^1$ spaces. Functions in them
do not have any interelement continuity constraints. When computing differential operators on them, we do so piecewise, and use an  abbreviated notation
for sums over elements,
\[
  (\div r, w)_{\oh} = \sum_{K \in \oh} \int_K w \div r,\qquad
  (r, \nabla w)_{\oh} = \sum_{K \in \oh} \int_K r \cdot \nabla w,
  \qquad
\]
for all $r \in \Hdivoh, \; w \in H^1(\oh),$ where the derivatives are
taken element by element.  For
$\hat r_n \in H^{-1/2}(\partial \Omega_h), r \in \Hdiv, w \in \Ho^1,
\hat w \in \Hoh$, the  abbreviated notation
\[
  \ip{ \hat r_n, w }_{\partial \oh} = \sum_{K \in \oh} \ip{ \hat r_n,
    w}_{H^{1/2}(\partial K)}, \quad
  \ip{ r \cdot n, \hat w}_{\partial\oh} =
  \sum_{K \in \oh} \ip{ r\cdot n, \hat w}_{H^{1/2}(\partial K)},
\]
for these often-occurring sums over element boundaries will be useful.
The broken spaces in~\eqref{eq:13} are normed using standard product norms, i.e.,
$\| r \|_{\Hdivoh}^2 = (r, r)_\om + (\div r, \div r)_\oh$ and $\| v \|_{H^1(\oh)}^2 = (v,v)_\om + (\grad v, \grad v)_\oh$.
There is an analogue of  the standard Friedrichs inequality~\eqref{eq:Friedrichs-std}  for the broken space $H^1(\oh)$,  namely, there is a mesh-independent $\tilde c_P>0$ such that
\begin{gather}
\label{eq:Friedrichs-generalized}
  \tilde c_P \| v \|_{L_2}^2
  \le
  \sup_{q \in \Hdiv} \left(
    \frac{ \ip{ q\cdot n, v}_{\partial \oh}}
    { \| q \|_{\Hdiv}} \right)^2
   + \sum_{K\in \oh} \|\nabla v \|_{L_2(K)}^2
 \end{gather}
for all $v \in H^1(\oh)$, which is proved
in~\cite[Lemma~4.1]{DemkoGopal11a}.
It is easy to see that  $\tilde c_P$ cannot be larger than  the  $c_P$
of~\eqref{eq:Friedrichs-std}.

The DPG formulation of~\eqref{eq:4-mixed} seeks $\clu \in \Ua$ satisfying
\begin{subequations}
  \label{eq:DPG-A}
\begin{equation}
  \label{eq:12}
  a_\alpha(\clu, \clv) = \ell(\clv), \qquad \text{ for all } \clv \in \Va,
\end{equation}
where
\begin{align}
    & \Ua
    = L_2(\om)^d \times L_2(\om) \times \Hoh\times  H^{-1/2}(\partial\oh),
    \\
    & \Va
    = \Hdivoh \times H^1(\oh),
    \\
    & a_\alpha((q,u,\hat u, \hat q_n),    (r,v))
      =
      -\ip{ r\cdot n, \hat u}_{\partial\oh}
      + \ip{ \hat q_n, v}_{\partial\oh}
      \nonumber
  \\
    &
      \hspace{2.2cm} + (\rho_\alpha(q,u), r)_{\oh} + (u, \div r)_\oh - (q, \nabla v )_\oh
      + (\gamma_\alpha(u), v)_\om
      \label{eq:DPGinfiniteBF}
  \\
    & \ell((r,v)) = (f, v)_{L_2}, \label{eq:DPGinfiniteLF}
  \end{align}
By~\eqref{eq:aadaa}, the above $a_\alpha$ defines the nonlinear operator
$\Aa$ that casts~\eqref{eq:DPG-A} into the form~\eqref{eq:Au=ell},
\begin{equation}
  \label{eq:14}
  \Aa(\clu) = \ell, \quad\text{with}\quad\clu = (q, u, \hat u, \hat q_n),
\end{equation}
\end{subequations}
and we take $D = \Ua$, the forms being well defined there because
$\rho_\alpha$ and $\gamma_\alpha$ have linear growth by
Lemma~\ref{lem:rho-exists} and~\eqref{eq:Q3}.  The
formulation~\eqref{eq:DPG-A} is derived by the standard DPG technique
of multiplying the equations of~\eqref{eq:4-mixed-a}
and~\eqref{eq:4-mixed-b} by test functions $r$ and $v$, respectively,
integrating by parts element by element, and declaring unknown traces
on element boundaries as new variables---see, e.g.,~\cite{DPGacta} for
more details on this process.  The norms on $\Ua$ and $\Va$ are
product norms built from~\eqref{eq:quotient-norms}
and~\eqref{eq:13}, namely
\begin{equation}
  \label{eq:uw-norms}
  \begin{aligned}
    \|\clu\|^2_{\Ua}
    & = \| ( q, u, \hat u, \hat q_n) \|_{\Ua}^2
      = \|q\|^2_{L_2} + \|u\|^2_{L_2}
    + \|\hat u\|^2_{H^{1/2}(\partial\oh)}
    + \|\hat q_n\|^2_{H^{-1/2}(\partial\oh)},
    \\
    \| \clv \|^2_{\Va}
    & = \| (r, v) \|_{\Va}^2 = \| r \|_{\Hdivoh}^2 + \| v \|^2_{H^1(\oh)}.
  \end{aligned}
\end{equation}

For stating a conforming computable discrete formulation, let ${P}_k= P_k(\oh)$
denote the space of piecewise polynomials of degree at most $k$ on
each element of $\om_h$ (without any inter-element continuity
conditions), i.e., if $P_k(K)$ denotes the space of restrictions of
polynomials to an element $K \in \oh$, the piecewise space is
$P_k = \bigtimes_{K \in \oh} P_k(K).$ The lowest-order Raviart--Thomas
subspace of $\Hdiv$ is $ \RT= \left [ P_0^d + x P_0 \right ] \cap \Hdiv $ where $x$ is the
coordinate vector. We will also need the lowest-order Lagrange space
on the same mesh, $ U_1 = {P}_1 \cap {\Ho}^1.$ The subspaces of traces
are $\hat U_1 = \tr( U_1)$ and $\hat\RT = \trn (\RT)$ on element
boundaries, i.e.,
$ \hat U_1 = \{ \hat w: \; \hat w|_{\partial K} = w|_{\partial K}
\text{ for some } w \in U_1 \text{ for all } K \in \oh \}$ and
$ \hat\RT = \{ \hat q_n: \; \hat q_n|_{\partial K} = (r\cdot
n)|_{\partial K} \text{ for some } r \in \RT \text{ for all } K \in
\oh \}$.  Then
\begin{equation}
  \label{eq:uw-discrete}
\Uah = P_0^d \times P_0 \times U_1 \times \hat\RT,
\qquad
\Vah = P_2^d \times P_{d}
\end{equation}
are subspaces of $\Ua$ and $\Va$, respectively. With these choices,
the DPG method is defined as usual~\cite{DemkoGopal11,DPGacta}. In
particular, all ingredients to specify the NDPG loss function of
\S\ref{ssec:ndpg-abstract}, i.e., $\Ja \clw$
of~\eqref{eq:DPG-loss-defn} with the broken $\Vah$
of~\eqref{eq:uw-discrete}, are now in place for any $\clw \in \Ua$.

We now verify Assumptions~\ref{asm:NDPGdiff} and~\ref{asm:fortin-a}
for the ultraweak formulation~\eqref{eq:DPG-A}, which defines $\Aa$,
$a_\alpha$, $\Ua$, and $\Va$, under Hypothesis~\ref{hyp:sigma}.  We begin with
Assumption~\ref{asm:NDPGdiff}.

\begin{proposition}[Verification of Assumption~\ref{asm:NDPGdiff}]
  \label{prop:NDPGdiff-verif'}
  Let Hypothesis~\ref{hyp:sigma} hold.  Then, in the setting
  of~\eqref{eq:DPG-A}, the operator $\Aa : \Ua \to \Vap$ satisfies all
  conditions of Assumption~\ref{asm:NDPGdiff} with $D = \Ua$ and
  $\delta = +\infty$.  Specifically:
  \begin{enumerate}[label={\rm(\alph*)}]
  \item for each $\alpha \in \cQ$ there is a $\clu = \clu(\alpha) \in
    \Ua$ with $\Ra\clu = 0$;
  \item the G\^ateaux derivative $d\Aa(\clw) : \Ua \to \Vap$
    of~\eqref{eq:gateaux} exists at each $\clw \in \Ua$ and
    $\alpha \in \cQ$, is given by~\eqref{eq:dA}, depends continuously
    on $\clw$ along line segments, and
    satisfies $\|d\Aa(\clw)(\clz)\|_{\Vap} \le \CdB\|\clz\|_{\Ua}$ with
    \begin{equation}
      \label{eq:uw-Cd}
      \CdB^2 = 3 + c_P^{-1} + \frac{1}{\mu_1^2}
      + \Bigl( 1 + \frac{\lambda}{\mu_1} \Bigr)^2 + C_\gamma^2 ;
    \end{equation}
  \item inequality~\eqref{eq:c-prime} holds for all $\clz \in \Ua$,
    i.e., with $\delta = +\infty$, with $c_* = \cdB$ given by
    \begin{equation}
      \label{eq:uw-cd}
      \frac{1}{\cdB^2}
      = \frac{1 + c_P^{-1}}{c_T^2}
      + \Bigl( 1 + \frac{(1 + c_P^{-1})^{1/2}\, C_T}{c_T} \Bigr)^2,
    \end{equation}
    where $c_T$ and $C_T$ are the constants
    of~\eqref{eq:frozen-transposed-two-sided}; in particular $c_*$
    depends only on $\mu_1$, $\mu_2$, $\lambda$, $C_\gamma$, $c_0$ and
    $c_P$.
  \end{enumerate}
  In particular, since $\delta = +\infty$ and $D = \Ua$, part {\rm(c)}
  implies that the $\clu$ in part {\rm(a)} is unique, and is the
  unique minimizer in~\eqref{eq:exact-minimization}.
\end{proposition}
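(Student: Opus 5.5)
The plan is to verify the three parts of Assumption~\ref{asm:NDPGdiff} separately. The ultraweak nonlinear operator will be reduced, through mean-value coefficients, to the operator $T_{S,\tau,c}$ of~\eqref{eq:frozen-transposed}, so that Lemma~\ref{lem:frozen-operator} carries the stability argument.

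For (a), let $u\in\Ho^1$ be the solution from Lemma~\ref{lem:quasilinear-wellposed}. Set $q=\sigma_\alpha(\nabla u,u)$, which lies in $\Hdiv$ as in the proof of Proposition~\ref{prop:nfosls-verif}. Then take $\hat u=\tr u$, $\hat q_n=\trn q$ and $\clu=(q,u,\hat u,\hat q_n)$. By~\eqref{eq:rho-defn}, $\rho_\alpha(q,u)=\nabla u$. Integrating $(\nabla u,r)_\oh$ and $(q,\nabla v)_\oh$ by parts element by element cancels the two trace terms, and $\dive q+\gamma_\alpha(u)=f$ then gives $a_\alpha(\clu;\clv)=\ell(\clv)$ for all $\clv\in\Va$.

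For (b), the only nonlinear terms are $(\rho_\alpha(q,u),r)_\oh$ and $(\gamma_\alpha(u),v)_\om$. I will repeat the argument of Proposition~\ref{prop:nfosls-verif}: write the difference quotients pointwise via the FTC, dominate them using~\eqref{eq:rho-bounds} and~\eqref{eq:Q3}, and pass to the limit by dominated convergence. The candidate formula for~\eqref{eq:dA} is
\[
d\Aa(\clw)(\clz)(r,v)=-\ip{r\cdot n,\hat z}_{\partial\oh}+\ip{\hat z_n,v}_{\partial\oh}+(\partial_q\rho_\alpha\,\varsigma+\partial_s\rho_\alpha\, z,r)_\oh+(z,\dive r)_\oh-(\varsigma,\nabla v)_\oh+(\gamma_\alpha'(w)z,v)_\om .
\]
Continuity along segments follows by the same domination. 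For the bound, each trace pairing is at most the quotient norm~\eqref{eq:quotient-norms} times the broken norm of the test function: extend $\hat z$ to some $\tilde z\in\Ho^1$ and integrate by parts on each element, then take the infimum over extensions. The interior terms are handled with $|\partial_q\rho_\alpha|\le\mu_1^{-1}$, $|\partial_s\rho_\alpha|\le\lambda/\mu_1$, $|\gamma'_\alpha|\le C_\gamma$ and Friedrichs~\eqref{eq:Friedrichs-std}. Summing squares by Cauchy--Schwarz should produce the constant in~\eqref{eq:uw-Cd}.

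Part (c) is the main step. Take $\clz^i=(q^i,u^i,\hat u^i,\hat q^i_n)$ and let $e$ denote their difference. The key idea is not to average $\partial\rho_\alpha$ directly, since that would not produce an admissible $\tau$. Instead, set $p^i=\rho_\alpha(q^i,u^i)$, so that $q^i=\sigma_\alpha(p^i,u^i)$, and apply the FTC to $\sigma_\alpha$ along the segment joining $(p^0,u^0)$ and $(p^1,u^1)$. This gives $e_q=-\bar S(p^0-p^1)+\bar\tau e_u$ with $(\bar S,\bar\tau,\bar c)$ admissible, where $\bar c$ comes from $\gamma_\alpha$ as in~\eqref{eq:means}. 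Hence $\rho_\alpha(q^0,u^0)-\rho_\alpha(q^1,u^1)=-\bar S^{-1}e_q+\bar S^{-1}\bar\tau\,e_u$.

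Testing $\Aa(\clz^0)-\Aa(\clz^1)$ with a conforming $(r,v)\in\Hdiv\times\Ho^1$ removes the trace terms and leaves exactly $\big((e_q,e_u),T_{\bar S,\bar\tau,\bar c}(r,v)\big)_{L_2}$. By the bijectivity in Lemma~\ref{lem:frozen-operator}, choose $(r,v)$ with $T_{\bar S,\bar\tau,\bar c}(r,v)=(e_q,e_u)$. Then $\|(r,v)\|_{\Hdiv\times\Ho^1}\le c_T^{-1}\|(e_q,e_u)\|_{L_2}$, and the $\Va$-norm of $(r,v)$ is larger by at most the factor $(1+c_P^{-1})^{1/2}$. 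This bounds $\|(e_q,e_u)\|_{L_2}$ by $(1+c_P^{-1})^{1/2}c_T^{-1}\|\Aa(\clz^0)-\Aa(\clz^1)\|_{\Vap}$.

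For the traces, I will use the standard broken-space duality $\sup_{(r,v)\in\Va}\big(-\ip{r\cdot n,\hat e}_{\partial\oh}+\ip{\hat e_n,v}_{\partial\oh}\big)/\|(r,v)\|_\Va = \|(\hat e,\hat e_n)\|$ from the DPG literature~\cite{CarstDemkoGopal16}. The trace pairing equals the full residual difference minus the interior terms. The interior terms are bounded by $C_T$ times $\|(e_q,e_u)\|$, up to the same Friedrichs factor, using the upper bound in~\eqref{eq:frozen-transposed-two-sided}, extended in the obvious way to broken test functions. Combining the interior and trace estimates gives~\eqref{eq:uw-cd}. Uniqueness then follows as remarked after Assumption~\ref{asm:NDPGdiff}.

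The step I expect to be delicate is this last combination. The broken-test-function analogue of the $C_T$ bound and the exact bookkeeping of the $(1+c_P^{-1})$ factors must be checked against the broken $H^1(\oh)$ norm.
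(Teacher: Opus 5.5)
This is essentially the paper's proof. Parts (a) and (b) coincide with it. In (c) you use the same linearization of $\sigma_\alpha$ at $p^i=\rho_\alpha(q^i,u^i)$ and the same reduction to $T_{\bar S,\bar\tau,\bar c}$ through Lemma~\ref{lem:frozen-operator}; the only difference is that you write out inline the broken-space argument that the paper imports from \cite[Theorem~3.3]{CarstDemkoGopal16}.

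On the step you flagged, the claimed bound does not hold as stated. For broken $v$ the term $(\bar c\,e_u,v)_\om$ can only be bounded through $\|v\|_{L_2}$, since no Friedrichs inequality is available. The interior terms are therefore continuous on $\Va$ with constant $\bigl(2+(1+\lambda^2)\mu_1^{-2}+C_\gamma^2\bigr)^{1/2}$. This can exceed both $C_T$ and $(1+c_P^{-1})^{1/2}C_T$: take $r=0$, $v$ piecewise constant, and $\bar c\equiv C_\gamma$ large. Using this constant in place of $C_T$ in your combination gives~\eqref{eq:uw-cd} with that substitution, so only the explicit value of $c_*$ changes, not part (c) itself. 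The paper's own value $\|b^0\|\le C_T$ is read off from the unbroken bound~\eqref{eq:frozen-transposed-two-sided} and rests on the same simplification.
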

\begin{proof}
  {\em Proof of} {\rm(a)}.  As in the proof of
  Proposition~\ref{prop:nfosls-verif}{\rm(a)}, the solution
  $u = u(\alpha) \in \Ho^1$ of~\eqref{eq:weak-quasilinear} gives
  $q := \sigma_\alpha(\nabla u, u) \in \Hdiv$ with
  $\dive q = f - \gamma_\alpha(u)$.  Hence
  the trace $\hat q_n := \trn(q)$, in addition
  to $\hat u := \tr(u)$,
   exists.  Elementwise integration by parts then
   shows that $\clu(\alpha) = (q,u,\hat u,\hat q_n) \in \Ua$
   satisfies $\Aa(\clu) = (0, f)$,
   i.e.,
   $\Ra\clu = 0$.  Since
   $\Aa$ is defined on all of $\Ua$
   and this argument did not restrict $\clu(\alpha)$ to any neighborhood,
   we take
  $D = \Ua$.

  {\em Proof of} {\rm(b)}.  Let $\clw = (\varpi, w, \hat w, \hat\varpi_n)$ and
  $\clz = (\varsigma, z, \hat z, \hat\varsigma_n)$ be in $\Ua$ and
  $\clv = (r, v) \in \Va$.
  Among the terms of $\Aa$ in~\eqref{eq:DPG-A}, only
  $(\rho_\alpha(\varpi, w), r)_{\oh}$ and $(\gamma_\alpha(w), v)_\om$
  fail to be linear in $\clw$.
  These terms are treated exactly as in the proof of
  Proposition~\ref{prop:nfosls-verif}{\rm(b)}, with $\rho_\alpha$ in
  place of $\sigma_\alpha$: their difference quotients are generated
  by $C^1$ functions with bounded derivatives, by
  Lemma~\ref{lem:rho-exists} and~\eqref{eq:Q3}, and the FTC represents
  them as integrals whose integrands are dominated, uniformly in $t$,
  by $\mu_1^{-1}|\varsigma| + (\lambda/\mu_1)|z|$ and $C_\gamma|z|$,
  respectively, in view of~\eqref{eq:rho-bounds}
  and~\eqref{eq:Q3}.  The dominated convergence theorem therefore
  gives the limit in~\eqref{eq:gateaux},
  \begin{align}
    \label{eq:dA}
    d\Aa(\clw)(\clz)(\clv)
    &=-\langle r\cdot n,\hat z\rangle_{\partial\Omega_h}+
      \langle \hat\varsigma_n,v\rangle_{\partial\Omega_h}
      + \bigl( \partial_q\rho_\alpha(\varpi, w)\, \varsigma
      + \partial_s\rho_\alpha(\varpi,w)\, z, \; r\bigr)_{\Omega_h}
      \nonumber\\
    &\quad +(z,\dive\,r)_{\Omega_h}
      - (\varsigma,\nabla v)_{\Omega_h}
      + \bigl( \gamma_\alpha'(w)\, z, v \bigr)_{\om}
  \end{align}
  for any $\clv = (r,v) \in \Va$.  The same domination and continuity
  show that $t \mapsto d\Aa(\clw + t\clz)(\clz)(\clv)$ is
  continuous.  Finally, to show the boundedness of
  $d \Aa (\clw): \Ua \to \Vap$, group the terms of~\eqref{eq:dA} by
  test component.  Elementwise integration by parts, the quotient
  norms~\eqref{eq:quotient-norms} and~\eqref{eq:Friedrichs-std} give
  \[
    |\ip{ r\cdot n, \hat z}_{\partial\oh}| \le (1 + c_P^{-1})^{1/2}\,
    \|\hat z\|_{H^{1/2}(\partial\oh)} \|r\|_{\Hdivoh}, \qquad
    |\ip{ \hat\varsigma_n, v}_{\partial\oh}| \le \|\hat
    \varsigma_n\|_{H^{-1/2}(\partial\oh)} \|v\|_{H^1(\oh)},
  \]
  while the
  volume terms are bounded using the Cauchy--Schwarz inequality
  and~\eqref{eq:rho-bounds}, \eqref{eq:Q3}.  The terms tested by $r$
  are thus bounded by
  $\bigl[ (1 + c_P^{-1})^{1/2} \|\hat z\| + \mu_1^{-1}\|\varsigma\| + (1
  + \lambda\mu_1^{-1})\|z\| \bigr] \|r\|_{\Hdivoh}$ and those tested
  by $v$ by
  $\bigl[ \|\hat\varsigma_n\| + \|\varsigma\| + C_\gamma \|z\| \bigr]
  \|v\|_{H^1(\oh)}$.  Two applications of the Cauchy--Schwarz
  inequality, first in each bracket and then in the two test
  components, give~\eqref{eq:uw-Cd}.  This proves {\rm(b)}.

  {\em Proof of} {\rm(c)}.  Fix $\clz^0$ and $\clz^1$ in $\Ua$, with
  components $\clz^i = (\varsigma^i, z^i, \hat z^i, \hat\varsigma^i_n)$,
  and abbreviate
  $\clz^0 - \clz^1
  = (e_\varsigma, e_z, \hat e_z, \hat e_{\varsigma_n})$.
  Subtracting two copies of~\eqref{eq:DPGinfiniteBF}, we obtain, for
  any $\clv = (r,v) \in \Va$,
  \begin{equation}
    \label{eq:uw-difference}
    \begin{aligned}
      \ip{ \Aa(\clz^0) - \Aa(\clz^1), \clv}
      & = -\ip{ r\cdot n, \hat e_z}_{\partial\oh}
        + \ip{ \hat e_{\varsigma_n}, v}_{\partial\oh}
        + \bigl( \rho_\alpha(\varsigma^0, z^0)
        - \rho_\alpha(\varsigma^1, z^1), \; r \bigr)_{\oh}
      \\
      & \quad
        + (e_z, \dive r)_{\oh} - (e_\varsigma, \nabla v)_{\oh}
        + \bigl( \gamma_\alpha(z^0) - \gamma_\alpha(z^1), \; v
        \bigr)_{\om},
    \end{aligned}
  \end{equation}
  where every term other than the two nonlinear differences is already
  a linear function of $\clz^0 - \clz^1$.  It remains to express those
  two differences through $\clz^0 - \clz^1$ as well.  Putting
  $z^t := z^0 - t\, e_z$ for $t \in [0,1]$, which runs from $z^0$ to
  $z^1$, the reaction difference is immediate from the FTC,
  \begin{equation}
    \label{eq:reaction-ftc}
    \gamma_\alpha(z^0) - \gamma_\alpha(z^1) = \bar c\, e_z,
    \qquad
    \bar c := \int_0^1 \gamma_\alpha'(z^t)\, dt .
  \end{equation}

  For the flux difference, we proceed a bit differently from the
  least-squares case: we linearize in
  the arguments on which the forward flux
  law $\sigma_\alpha$ acts, rather than in the trial variables
  $(\varsigma^i, z^i)$ on which its inverse  $\rho_\alpha$ acts.  Accordingly, set,
  pointwise a.e.\ in $\om$,
  $
    p^i := \rho_\alpha(\varsigma^i, z^i)$ and
    $p^t := p^0 - t\, (p^0 - p^1)$
    for all   $t \in [0,1]$, so that $p^t$ and $z^t$ run from $p^0$
    and $z^0$ to $p^1$ and $z^1$,
  and let $\bar S, \bar\tau$ be given by~\eqref{eq:means} with
  $(\nabla w^t, w^t)$ there replaced by $(p^t, z^t)$.  Then
  $\bar S, \bar\tau, \bar c$ is an admissible triple in the sense of
  Lemma~\ref{lem:quasilinear-wellposed}.  Since
  $\sigma_\alpha(p^i, z^i) = \varsigma^i$ by~\eqref{eq:rho-defn}, the FTC
  applied to
  $t \mapsto \sigma_\alpha(p^t, z^t)$ gives
  $e_\varsigma = \varsigma^0 - \varsigma^1   = -\bar S\, (p^0 - p^1) + \bar\tau\, e_z,
  $ which implies
  $\rho_\alpha(\varsigma^0,z^0) - \rho_\alpha(\varsigma^1,z^1)
    = p^0 - p^1
    = \bar S^{-1}\bigl( \bar\tau\, e_z - e_\varsigma \bigr),$
    since the matrix $\bar S$ is invertible by~\eqref{eq:Q1}.  Substituting this
    and~\eqref{eq:reaction-ftc}
  into~\eqref{eq:uw-difference}, we arrive at
  \begin{equation}
    \label{eq:uw-frozen}
    \ip{ \Aa(\clz^0) - \Aa(\clz^1), \clv}
    = b_{\bar S, \bar\tau, \bar c}(\clz^0 - \clz^1, \clv),
    \qquad \clv \in \Va,
  \end{equation}
  where, for an admissible triple $S, \tau, c$,
  \begin{align}
    \label{eq:uw-frozen-form}
    b_{S,\tau,c}\bigl( (\varsigma, z, \hat z, \hat \varsigma_n), (r,v) \bigr)
    & := -\ip{ r\cdot n, \hat z}_{\partial\oh}
      + \ip{ \hat\varsigma_n, v}_{\partial\oh}
      + \bigl( S^{-1}( \tau\, z - \varsigma), r \bigr)_{\oh}
      \nonumber
    \\
    & \qquad
      + (z, \dive r)_\oh - (\varsigma, \nabla v)_\oh
      + (c\, z, v)_\om .
  \end{align}
  We claim that
  \begin{equation}
    \label{eq:uw-infsup}
    \sup_{0 \ne \clv \in \Va}
    \frac{ | b_{S,\tau,c}(\clz, \clv) |}{\|\clv\|_{\Va}}
    \;\ge\; \cdB\, \|\clz\|_{\Ua},
    \qquad \clz \in \Ua,
  \end{equation}
  with the $\cdB$ of~\eqref{eq:uw-cd}, uniformly over all admissible
  triples $S, \tau, c$ and all $\alpha \in \cQ$.  Granting the claim, \eqref{eq:uw-frozen} gives
  $\|\Aa(\clz^0) - \Aa(\clz^1)\|_{\Vap} \ge \cdB \|\clz^0 -
  \clz^1\|_{\Ua}$ for {\em all} $\clz^0, \clz^1 \in \Ua$, which
  is~\eqref{eq:c-prime} with $c_* = \cdB$ and $\delta = +\infty$ upon
  setting $\clz^0 = \clu(\alpha)$; since the estimate holds for all
  pairs, the element $\clu$ of part {\rm(a)} is also the unique one at
  which $\Ra\cdot$ vanishes.

  It remains to prove the claim~\eqref{eq:uw-infsup}.  The form $b_{S,\tau,c}$ is the
  ultraweak DPG bilinear form of the linear boundary value
  problem
  \begin{equation}
    \label{eq:uw-frozen-bvp}
    \varsigma = \tau\, z - S \nabla z,
    \qquad
    \dive \varsigma + c\, z = g \; \text{ in } \om,
    \qquad z|_{\partial\om} = 0,
  \end{equation}
  i.e., of $-\dive( S\nabla z - \tau\, z) + c\, z = g$, whose primal weak
  form is $B_{S,\tau,c}$ of~\eqref{eq:frozen-form}.  By the theory of
  broken variational
  formulations (see~\cite[Theorem~3.3]{CarstDemkoGopal16} or \cite[Theorem~4.3]{DPGacta})
  the claim~\eqref{eq:uw-infsup} holds, with a constant depending only on the
  corresponding unbroken constant and on the norms
  in~\eqref{eq:quotient-norms}, as soon as the {\em unbroken}
  ultraweak form (which is \eqref{eq:uw-frozen-form} with interface terms removed)
  \[
    b^0_{S,\tau,c}\bigl( (\varsigma, z), (r,v)\bigr)
    := \bigl( S^{-1}(\tau\,z - \varsigma), r \bigr)_\om
    + (z, \dive r)_\om - (\varsigma, \nabla v)_\om + (c\, z, v)_\om
  \]
  is inf-sup stable on
  $\bigl( L_2^d \times L_2 \bigr) \times \bigl( \Hdiv \times
  \Ho^1\bigr)$.  Since $S$, and hence $S^{-1}$, is symmetric, observe that $
    b^0_{S,\tau,c}\bigl( (\varsigma, z), (r,v) \bigr)
    = \bigl( (\varsigma, z), \, T_{S,\tau,c}(r,v) \bigr)_\om
  $
  with $T_{S,\tau,c}$ as in~\eqref{eq:frozen-transposed}.  The trial
  space being an $L_2$ product, the required stability is therefore
  equivalent to the bijectivity of $T_{S,\tau,c}$ onto
  $L_2^d\times L_2$ with a uniformly bounded inverse, which is
  provided by Lemma~\ref{lem:frozen-operator}.

  This also makes $\cdB$ explicit.  Indeed,
  \cite[Theorem~3.3]{CarstDemkoGopal16} gives
  $\cdB^{-2} = c^{-2}_{\rm ub} + \hat c^{-2}( \|b^0\|/c_{\rm ub} +
  1)^2$, where $c_{\rm ub}$ and $\|b^0\|$ are the inf-sup and
  continuity constants of $b^0_{S,\tau,c}$ and $\hat c$ is the
  stability constant of the interface terms, which equals one because
  the quotient norms~\eqref{eq:quotient-norms} are the dual norms of
  those terms.  Taking $(r,v) = T_{S,\tau,c}^{-1}(\varsigma, z)$
  in the supremum, and using
  $\|(r,v)\|_{\Va} \le (1+c_P^{-1})^{1/2}\|(r,v)\|_{\Hdiv\times\Ho^1}$
  by~\eqref{eq:Friedrichs-std}, the two-sided
  bound~\eqref{eq:frozen-transposed-two-sided} gives
  $c_{\rm ub} \ge c_T (1+c_P^{-1})^{-1/2}$ and $\|b^0\| \le C_T$,
  whence~\eqref{eq:uw-cd}.
\end{proof}

Next, we turn to the verification of Assumption~\ref{asm:fortin-a}.  For
the simplicial DPG constructions here and in
\S\ref{ssec:kirchhoff-setting}, we take $d\ge2$.  At lowest order we
need a scalar Fortin operator that also preserves element averages.
We adapt a mean correction trick of~\cite{CortesGopal26, FuhreHeuer24}
(but unlike both those works, we now use a quadratic polynomial having
zero face averages).  On a simplex $K$ with barycentric coordinates
$\lambda_0,\ldots,\lambda_d$, set
\begin{equation}
  \label{eq:fortin-mean-correction}
  \begin{aligned}
    \psi_K
    &:= (d+2)\left(1-\frac{d+1}{2}
      \sum_{i=0}^d\lambda_i^2\right),
    \\
    \tilde\Pi_d v &:= \Pi_d^{\rm grad}v
      + \frac{(v-\Pi_d^{\rm grad}v,1)_K}{|K|}\,\psi_K,
      \qquad v\in H^1(K),
  \end{aligned}
\end{equation}
where $\Pi_d^{\rm grad}$ is the operator
of~\cite[Lemma~3.2]{GopalQiu14} at $p=0$.  The barycentric integration
formulas give $(\psi_K,1)_K=|K|$ and
$\int_F\psi_K=0$ for every face $F$ of $K$.  Consequently,
$\tilde\Pi_d$ preserves element and face averages and reproduces constants:
\begin{equation}
  \label{eq:fortin-preservation}
    (v-\tilde\Pi_d v,1)_K
    = 0,
    \qquad
    (v-\tilde\Pi_d v, 1)_F
    = 0,
    \qquad
    \tilde\Pi_d c = c,
\end{equation}
for all $v\in H^1(K)$, all facets $F$ of $K$, and all constants
$c \in P_0(K)$.  It maps into $P_d(K)$ since
$\psi_K\in P_2(K)\subseteq P_d(K)$.  It is uniformly bounded in
$H^1(K)$ on shape-regular mesh families.  Indeed, the construction
in~\cite[Lemma~3.2]{GopalQiu14} and the Poincar\'e inequality give
$\|v-\Pi_d^{\rm grad}v\|_{L_2(K)}\le C h_K\|\nabla v\|_{L_2(K)}$.
Thus the absolute value of the correction coefficient is at most
$C h_K|K|^{-1/2}\|\nabla v\|_{L_2(K)}$.  Since
$\|\psi_K\|_{L_2(K)}\le C|K|^{1/2}$ and
$\|\nabla\psi_K\|_{L_2(K)}\le C h_K^{-1}|K|^{1/2}$, the correction is
uniformly bounded, just as the original $\Pi_d^{\rm grad}$ operator,
on $H^1(\oh)$.

\begin{proposition}[Verification of Assumption~\ref{asm:fortin-a}]
  \label{prop:fortin-a-verif}
  Let Hypothesis~\ref{hyp:sigma} hold and adopt the discrete
  spaces~\eqref{eq:uw-discrete}.  Suppose that the nonlinearities
  preserve piecewise constants on $\oh$, i.e.,
  \begin{equation}
    \label{eq:P0-preserving}
    \rho_\alpha(q_h, u_h) \in P_0^d
    \quad\text{and}\quad
    \gamma_\alpha(u_h) \in P_0
    \qquad\text{for all } q_h \in P_0^d,\; u_h \in P_0 .
  \end{equation}
  Then Assumption~\ref{asm:fortin-a} holds with $D_h = \Uah$.
\end{proposition}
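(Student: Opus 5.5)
We construct $\fp$ componentwise and elementwise, and then check that every term of $a_\alpha(\clw_h;\cdot)$ in~\eqref{eq:DPGinfiniteBF} is blind to $\clv-\fp\clv$ whenever $\clw_h=(q_h,u_h,\hat u_h,\hat q_{n,h})\in\Uah$. Since $\Uah\subseteq\Ua=D$, we have $D_h=\Uah$. For $\clv=(r,v)\in\Va$ we put $\fp(r,v):=(\Pi^{\rm div} r,\tilde\Pi_d v)$. The scalar part $\tilde\Pi_d$ is the mean-corrected operator of~\eqref{eq:fortin-mean-correction}, applied on each $K$. The vector part $\Pi^{\rm div}$ is the lowest-order broken $H(\div)$ Fortin operator into $P_2^d$ from~\cite{GopalQiu14} (its $p=0$ instance, which maps into $P_{1}^d\subseteq P_2^d$). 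On each $K$ it satisfies
\[
\langle (r-\Pi^{\rm div}r)\cdot n,\hat w\rangle_{\partial K}=0\ \ \forall\hat w\in P_1(F),\qquad (r-\Pi^{\rm div}r,c)_K=0\ \ \forall c\in P_0(K)^d,
\]
and is uniformly bounded in $H(\div,K)$. If the cited operator does not already preserve $P_0^d$ moments, we add a vector mean correction in the spirit of~\eqref{eq:fortin-mean-correction}: a $P_2^d$ bubble-type correction with vanishing normal traces on $\partial K$ and prescribed integral. Continuity and linearity of $\fp$ on $\Va$ are then inherited from these local bounds and shape regularity.

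Next we verify~\eqref{eq:Fortin} term by term with $(r,v)$ replaced by $(\delta r,\delta v):=(r-\Pi^{\rm div}r,\,v-\tilde\Pi_d v)$.
\begin{itemize}
\item $\langle\delta r\cdot n,\hat u_h\rangle_{\partial\oh}$: the trace $\hat u_h$ is piecewise linear on each face, so this vanishes by the face moment property.
\item $\langle\hat q_{n,h},\delta v\rangle_{\partial\oh}$: on each face $\hat q_{n,h}$ is constant, being a Raviart--Thomas normal trace. The term vanishes by the face-average property in~\eqref{eq:fortin-preservation}.
\item $(u_h,\div\delta r)_\oh$: integrate by parts on $K$. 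This gives $-(\nabla u_h,\delta r)_K+\langle\delta r\cdot n,u_h\rangle_{\partial K}$, and both pieces vanish because $u_h\in P_0$ has zero gradient and constant trace. Equivalently, we can ask that $\Pi^{\rm div}$ preserve $\int_K\div r$, which follows from the normal moments.
\item $(q_h,\nabla\delta v)_\oh$: since $q_h$ is constant on $K$, this equals $\langle q_h\cdot n,\delta v\rangle_{\partial K}$, which vanishes by face averages.
\item $(\rho_\alpha(q_h,u_h),\delta r)_\oh$ and $(\gamma_\alpha(u_h),\delta v)_\om$: here hypothesis~\eqref{eq:P0-preserving} is used, exactly once. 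It makes both nonlinear coefficients piecewise constant, so these terms vanish by the $P_0$ volume moments of $\Pi^{\rm div}$ and the element-average property of $\tilde\Pi_d$.
\end{itemize}

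The main obstacle is to pin down a vector operator into $P_2^d$ that simultaneously preserves linear face moments of the normal trace and $P_0^d$ volume moments, with $H(\div)$ bounds uniform under shape regularity. The scalar side is already settled by~\eqref{eq:fortin-preservation}. We would first check that the $p=0$ operator of~\cite[Lemma~3.2]{GopalQiu14} has these properties. Failing that, we would add the correction $\sum_j\frac{(\delta r_j,1)_K}{|K|}\,\psi_K^{\rm vec}e_j$, using a quadratic field with zero normal moments against $P_1(F)$ and unit mean. For $d\ge2$ such a field can be built from products $\lambda_i\lambda_j$ times edge tangents. Its boundedness follows by scaling arguments identical to those after~\eqref{eq:fortin-preservation}.
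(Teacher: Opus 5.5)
Your proposal is correct and is essentially the paper's proof: the same Fortin operator $\fp(r,v)=(\Pi^{\rm div} r,\tilde\Pi_d v)$ built from the $p=0$ operator of \cite[Lemma~3.3]{GopalQiu14}, the same term-by-term cancellations, and \eqref{eq:P0-preserving} used only for the two nonlinear terms. One correction: that $H(\div)$ operator comes from Lemma~3.3, not~3.2, and maps into $P_2^d$, not $P_1^d$; since it already preserves the $P_0^d$ volume moments together with the $P_1(F)$ normal moments, your fallback vector mean correction is never needed.
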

\begin{proof}
  Set $\fp(r,v):=(\Pi^{\rm div}_2 r,\tilde\Pi_d v)$, with
  $\Pi^{\rm div}_2$ from~\cite[Lemma~3.3]{GopalQiu14} and
  $\tilde\Pi_d$ from~\eqref{eq:fortin-mean-correction}.
  Both components are uniformly bounded in their respective test norms.
  The form $a_\alpha$ in~\eqref{eq:DPGinfiniteBF} differs from its
  linear counterpart only in the terms
  $(\rho_\alpha(q_h,u_h),r-\Pi^{\rm div}_2 r)_{\oh}$ and
  $(\gamma_\alpha(u_h),v-\tilde\Pi_d v)_\om$.
  By~\eqref{eq:P0-preserving}, their coefficients are in $P_0^d$
  and $P_0$, respectively.  The first term vanishes by the volume
  orthogonality of~\cite[Lemma~3.3]{GopalQiu14}, and the second by
  preservation of element averages in~\eqref{eq:fortin-preservation}.
  The remaining terms tested by $r$ vanish by the standard Fortin
  identities of~\cite[Lemma~3.3]{GopalQiu14}.  Those tested by $v$
  vanish by the face-average identity~\eqref{eq:fortin-preservation}:
  $\hat q_{n,h}$ is constant on each face, and integration by parts
  reduces $(q_h,\nabla(v-\tilde\Pi_d v))_K$ to face integrals since
  $q_h$ is constant on $K$.  Hence~\eqref{eq:Fortin} holds for every
  $\clw_h\in\Uah$.
\end{proof}

\begin{theorem}[Variational correctness of the NDPG loss function]
  \label{thm:vc-NDPG-example}
  Let Hypothesis~\ref{hyp:sigma} and~\eqref{eq:P0-preserving} hold,
  and adopt the setting of~\eqref{eq:DPG-A}
  and~\eqref{eq:uw-discrete}.  Then the NDPG loss function
  $\Ja{\cdot}$ of~\eqref{eq:DPG-loss-defn} is {\em globally}
  variationally correct: with $c = \cdB^2/2$ and $C = \CdB^2$ from
  Proposition~\ref{prop:NDPGdiff-verif'},
  the estimates~\eqref{eq:reliability}--\eqref{eq:efficiency} hold for
  every mesh and every $\clw_h \in \Uah$.  If, moreover, $f \in P_0$,
  then $\osc(\ell) = 0$, so that there are mesh- and
  $\alpha$-independent constants $c', C' > 0$ with
  \begin{align}
    \label{eq:ndpg-vc-example}
    c'\,\|\clu - \clw_h\|_{\Ua}^2
    \le \Ja{\clw_h}
    \le C'\,\|\clu - \clw_h\|_{\Ua}^2,
    \qquad \clw_h \in \Uah .
  \end{align}
\end{theorem}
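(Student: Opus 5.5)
The argument is an assembly of earlier results. Proposition~\ref{prop:NDPGdiff-verif'} gives Assumption~\ref{asm:NDPGdiff} in the setting~\eqref{eq:DPG-A} with $D=\Ua$, $\delta=+\infty$, $c_*=\cdB$ and $\CdB$ from~\eqref{eq:uw-Cd}. Under~\eqref{eq:P0-preserving}, Proposition~\ref{prop:fortin-a-verif} gives Assumption~\ref{asm:fortin-a} with $D_h=\Uah$ and the Fortin operator $\fp(r,v)=(\Pi^{\rm div}_2 r,\tilde\Pi_d v)$. Theorem~\ref{thm:NDPG-gen-nonlinear-dpg} then applies with $h_0=+\infty$ and $\tilde D_{h_0}=\Ua$. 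This yields~\eqref{eq:reliability}--\eqref{eq:efficiency} for every mesh and every $\clw_h\in\Uah$, with $c=\cdB^2/2$ and $C=\CdB^2$. These constants depend only on $\mu_1,\mu_2,\lambda,C_\gamma,c_0,c_P$, so they are independent of $\alpha$ and of the mesh.

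For~\eqref{eq:ndpg-vc-example}, two further facts are needed. The first is that $\|\fp\|_{L(\Va)}$ is bounded independently of $h$ and $\alpha$. The operator $\fp$ does not involve $\alpha$, and the discussion after~\eqref{eq:fortin-mean-correction}, together with \cite[Lemma~3.3]{GopalQiu14}, shows both components are uniformly bounded on shape-regular families in the broken norms of~\eqref{eq:uw-norms}. I would record the resulting bound $\|\fp\|_{L(\Va)}\le C_\Pi$, with $C_\Pi$ depending only on shape regularity and $d$, and then take $c'=\cdB^2/(2C_\Pi^2)$ and $C'=\CdB^2$.

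The second fact is $\osc(\ell)=0$ when $f\in P_0$. Since $\ell((r,v))=(f,v)_\om$, we have
\[
  \ell(\clv-\fp\clv)=\sum_{K\in\oh} f|_K\,(v-\tilde\Pi_d v,1)_K ,
\]
and each summand vanishes by the element-average preservation in~\eqref{eq:fortin-preservation}. Note that the $r$-component of $\clv-\fp\clv$ does not enter $\ell$ at all. This gives $\osc(\ell)=0$.

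With $\osc(\ell)=0$, dividing~\eqref{eq:reliability} by $\|\fp\|^2_{L(\Va)}\le C_\Pi^2$ gives the lower bound in~\eqref{eq:ndpg-vc-example}, and~\eqref{eq:efficiency} is the upper bound. The only step needing some care is the uniform $H(\div)$ and $H^1$ stability of $\fp$ in the broken norms, in particular the mean-correction term. That is already sketched in the paper before Proposition~\ref{prop:fortin-a-verif}, so I would cite it rather than reprove it.
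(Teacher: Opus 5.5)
Your proposal is correct and follows the paper's proof essentially step by step. You verify the two assumptions through Propositions~\ref{prop:NDPGdiff-verif'} and~\ref{prop:fortin-a-verif}, apply Theorem~\ref{thm:NDPG-gen-nonlinear-dpg} with $h_0=+\infty$ and $\tilde D_{h_0}=\Ua$, obtain $\osc(\ell)=0$ from the element-average identity~\eqref{eq:fortin-preservation}, and absorb the uniformly bounded $\|\fp\|^2_{L(\Va)}$ into $c'=\cdB^2/(2C_\Pi^2)$, which simply makes explicit the constant absorption the paper leaves implicit.
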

\begin{proof}
  Propositions~\ref{prop:NDPGdiff-verif'}
  and~\ref{prop:fortin-a-verif} verify Assumptions~\ref{asm:NDPGdiff}
  and~\ref{asm:fortin-a}, respectively.  Since $D = \Ua$ and
  $\delta = +\infty$, the result of
  Theorem~\ref{thm:NDPG-gen-nonlinear-dpg} holds globally on all of
  $\Uah$ with $h_0 = +\infty$ and $\tilde D_{h_0} = \Ua$, giving
  \eqref{eq:reliability} and~\eqref{eq:efficiency} with
  $c = c_*^2/2 = \cdB^2/2$ and $C = \CdB^2$.  When $f \in P_0$, the
  element-average identity~\eqref{eq:fortin-preservation} gives
  $(f, v - \tilde\Pi_d v)_{\om} = 0$ for all test functions $v$,
  giving $\osc(\ell) = 0$; absorbing the mesh-independent factor
  $\|\fp\|_{L(\Va)}^2$ of~\eqref{eq:reliability} into the constant
  yields~\eqref{eq:ndpg-vc-example}.
\end{proof}

We now work out the two example cases announced at the start of this
section, recording in each the constants that Hypothesis~\ref{hyp:sigma}
produces.

\subsection{Examples of parametric nonlinear diffusive fluxes}
\label{ssec:nonlinear-diffus-flux-model}

We now single out a class of nonlinear flux profiles that is standard in the literature~\cite{carstensen2017, Zeidl90}.  Let the parameter
enter only as a multiplicative material coefficient, so that
\begin{subequations}
  \label{eq:phi-asm}
  \begin{equation}
    \label{eq:10}
    \phi_\alpha := \alpha^{-1}\phi
  \end{equation}
  for one fixed positive $\phi \in C^1(0,\infty)$ that does not depend
  on $\alpha$, and set
\begin{equation}
  \label{eq:5}
  \sigma_\alpha(p, s) := -\phi_\alpha( |p|)\, p,
  \qquad
  \gamma_\alpha := 0.
\end{equation}
Two quantitative restrictions accompany this.
  The profile is asked to satisfy
  \begin{equation}
    \label{eq:phi-props}
    0 < \beta_1 \leq \phi(t) \leq \beta_2,
    \quad \beta_1 \leq \phi(t) + t \phi'(t) \leq \beta_2,
    \qquad \text{ for all } t>0,
  \end{equation}
  for some constants $0 < \beta_1 < \beta_2 < \infty$, the second pair
  of inequalities controlling the derivative of $t \mapsto t\phi(t)$.
  The parametric  coefficient, in turn, is asked to stay away from zero and
  infinity, i.e., $\cl Q$ consists of bounded positive functions on $\om$ satisfying
  \begin{equation}
    \label{eq:15}
    \alpha_1 \le \alpha(x) \le \alpha_2
    \qquad \text{ for a.e.~} x \in \om,
  \end{equation}
  uniformly over $\alpha \in \cQ$, for some constants
  $0 < \alpha_1 \le \alpha_2 < \infty$.
\end{subequations}
The resulting  model nonlinear diffusion problems~\cite{carstensen2017, Zeidl90} on
$\om$ arise from minimizers of
nonlinear ``energy functionals'' of the form
\begin{equation}
  \label{eq:min-with-varphi}
  u = \argmin_{w \in \Ho^1} \int_\om \varphi_\alpha(|\nabla w|) - f w
\end{equation}
where $f \in L_2$, and
$
  \varphi_\alpha(t) = \int_{0}^{t} s \,\phi_\alpha(s) \; ds.
$
Then, since $\varphi_\alpha'(t) = t \phi_\alpha(t)$,
the Euler--Lagrange equations
of~\eqref{eq:min-with-varphi}
give~\eqref{eq:weak-quasilinear}, while the
boundary value problem~\eqref{eq:4} reduces to
$\dive(\sigma_\alpha(\nabla u)) = f$.  In this instance the inverse
flux $\rho_\alpha(q,s)$ of Lemma~\ref{lem:rho-exists} is likewise
independent of $s$, and we write it simply as
$\rho_\alpha = \sigma_\alpha^{-1}$.

\begin{example}[Invertible profile]
  \label{ex:invertible}
  Set $\phi(t) = 2 + (1+t)^{-1}$ for $t \ge 0$.  Then
  $\phi'(t) = -(1+t)^{-2}$ and $\phi(t) + t\phi'(t) = 2 + (1+t)^{-2}$,
  so~\eqref{eq:phi-props} holds with $\beta_1 = 2$ and $\beta_2 = 3$.
  This profile has the additional property that $\sigma_\alpha$ is
  explicitly invertible.  Indeed, the monotone function
  $m(t) := t\phi(t) = 2t + 1 - (1+t)^{-1}$ satisfies $m(t) = s$ if and
  only if $2t^2 + (3-s)\,t - s = 0$, whence
  $m^{-1}(s) = \bigl( s - 3 + \sqrt{s^2 + 2s + 9}\,\bigr)/4$ for
  $s \ge 0$, on taking the positive root.  Since
  $q = \sigma_\alpha(p)$ gives $|q| = \alpha^{-1} m(|p|)$, a reversal
  of direction yields the explicit inverse
  \begin{equation}
    \label{eq:rho-explicit}
    \rho_\alpha(q) = \sigma_\alpha^{-1}(q)
    = -\frac{m^{-1}(\alpha|q|)}{|q|}\, q
    = -\frac{\alpha|q| - 3 + \sqrt{\alpha^2|q|^2 + 2\alpha|q| + 9}}{4|q|}\, q.
  \end{equation}
  The explicit inverse makes
  both NFOSLS and  NDPG losses
  computable for this PDE.
\end{example}

\begin{remark}
  \label{rem:p-Laplacian}
  An example of a nonlinearity that is not covered under the conditions of~\eqref{eq:phi-asm} is
   the  $p$-Laplacian ($p\ne 2$) studied recently in~\cite{MugaPererRojasRuiz26}. For this example, in the sign convention of~\eqref{eq:5}, $\sigma(p) = -|p|^{p-2} p$, so $\phi(t) = t^{p-2}$.
 For $p>2,$ the function  $\phi(t) \to 0$ as $t\to 0$, so no $\beta_1$ can exist.
 For $1<p<2,$ the function $\phi(t) \to \infty$ as $t\to 0$, so no $\beta_2$ as in~\eqref{eq:phi-asm} can exist.
\end{remark}

The properties of $\sigma_\alpha$ needed below are the following.
By~\cite[Lemma~4.4]{MDPGpaper}, which adapts arguments
of~\cite{carstensen2017}, the flux~\eqref{eq:5} under~\eqref{eq:phi-asm}
is $C^1$ on $\bb R^d$, with a derivative that extends continuously to
$p = 0$, and $-d\sigma_\alpha(p)$ is symmetric for every $p$.
Applying that lemma with $\alpha \equiv \alpha(x)$ and
using~\eqref{eq:15}, we find that, for a.e.~$x \in \om$,
\begin{gather}
  \label{eq:globalized-sigma}
  \frac{\beta_1}{\alpha_2} |\xi|^2
  \;\le\; - \xi \cdot d\sigma_\alpha(x, p)\, \xi
  \;\le\; \frac{\beta_2}{\alpha_1} |\xi|^2,
  \qquad p, \xi \in \bb R^d,
  \\
  \label{eq:globalized-rho}
  |\rho_\alpha(x, q) - \rho_\alpha(x, q')|
  \;\le\; \frac{\alpha_2}{\beta_1} |q - q'|,
  \qquad q, q' \in \bb R^d .
\end{gather}
The first of these is the strongly monotone form of the nonlinearity
in $-\sigma_\alpha$, which is what drives every verification of
Assumption~\ref{asm:NDPGdiff}(c) for this model class; the second is
the Lipschitz continuity of the inverse flux, used in
Remark~\ref{rem:no-frechet} below.  Since $\sigma_\alpha$ is
independent of its second argument while $\gamma_\alpha$ vanishes,
\eqref{eq:globalized-sigma} is precisely~\eqref{eq:Q1}, and we
conclude:

\begin{corollary}
  \label{cor:flux-instance}
  Under~\eqref{eq:phi-asm}, the choice~\eqref{eq:5} satisfies
  Hypothesis~\ref{hyp:sigma} with
  \[
    \mu_1 = \frac{\beta_1}{\alpha_2},
    \qquad
    \mu_2 = \frac{\beta_2}{\alpha_1},
    \qquad
    \lambda = C_\gamma = C_\gamma^- = 0,
    \qquad
    c_0 = \frac{\beta_1}{\alpha_2}.
  \]
  In particular the smallness condition~\eqref{eq:Q4} holds
  automatically, for every parameter range.
  Hence the NFOSLS loss is variationally correct, i.e.,
  \eqref{eq:nfosls-vc-example} holds with
  \begin{equation}
    \label{eq:Cd-c*-constants}
    \CdB = \sqrt 2 \max\Bigl( 1, \frac{\beta_2}{\alpha_1}\Bigr),
    \qquad
    c_*^{-2} = \Bigl( 1 + \frac 1 {c_P}\Bigr)
    \left[ \Bigl( 1 + \frac{\alpha_2\beta_2}{\alpha_1\beta_1}\Bigr)^2
      + c_P + \Bigl( \frac{\alpha_2}{\beta_1}\Bigr)^2 \right],
  \end{equation}
  and the NDPG loss is variationally correct when $\alpha$ and $f$ are in $P_0$.
\end{corollary}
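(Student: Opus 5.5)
The plan is to check Hypothesis~\ref{hyp:sigma} for the flux~\eqref{eq:5} directly, read off the constants, and then specialize Theorems~\ref{thm:vc-NFOSLS-example} and~\ref{thm:vc-NDPG-example}.

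\emph{Hypothesis~\ref{hyp:sigma}.} The map $\sigma_\alpha(p,s)=-\alpha^{-1}\phi(|p|)p$ does not depend on $s$, so $\partial_s\sigma_\alpha=0$ and we may take $\lambda=0$. Since $\gamma_\alpha=0$, also $C_\gamma=C_\gamma^-=0$, so \eqref{eq:Q2}--\eqref{eq:Q3} hold trivially. Then \eqref{eq:Q4} reduces to $c_0=\mu_1>0$, for any parameter range. The $C^1$ regularity of $\sigma_\alpha$ (with continuous extension of the derivative to $p=0$) and the symmetry of $-\partial_p\sigma_\alpha$ are supplied by \cite[Lemma~4.4]{MDPGpaper}. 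The pointwise spectral bound \eqref{eq:globalized-sigma} is exactly \eqref{eq:Q1} with $\mu_1=\beta_1/\alpha_2$ and $\mu_2=\beta_2/\alpha_1$. A small technical point is that $\sigma_\alpha$ depends on $x$ through $\alpha(x)$, whereas Hypothesis~\ref{hyp:sigma} was phrased for $x$-independent maps. All uses of the hypothesis, namely the admissible triples and the mean coefficients~\eqref{eq:means}, are pointwise a.e.\ in $x$. Measurability of $x\mapsto\sigma_\alpha(x,p)$ is inherited from $\alpha$, so the earlier lemmas apply verbatim.

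\emph{Constants.} I would substitute $\lambda=C_\gamma=0$ and $c_0=\mu_1$ into \eqref{eq:frozen-constants}. This gives $C_A^2=2\max(1,\mu_2^2)$, that is, $\CdB=\sqrt2\max(1,\beta_2/\alpha_1)$. For the lower constant, the bracket in $c_A^{-2}$ becomes $(1+\mu_2/\mu_1)^2+c_P+\mu_1^{-2}$. With $\mu_2/\mu_1=\alpha_2\beta_2/(\alpha_1\beta_1)$ and $\mu_1^{-1}=\alpha_2/\beta_1$, this is exactly \eqref{eq:Cd-c*-constants}. Proposition~\ref{prop:nfosls-verif} and Theorem~\ref{thm:vc-NFOSLS-example} then give \eqref{eq:nfosls-vc-example} with these constants.

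\emph{NDPG claim.} It remains to verify \eqref{eq:P0-preserving} when $\alpha\in P_0$. Here $\gamma_\alpha(u_h)=0\in P_0$ trivially. On each element $K$, the values $\alpha|_K$ and $q_h|_K$ are constants. Hence $\rho_\alpha(q_h)|_K=\sigma_{\alpha|_K}^{-1}(q_h|_K)$ is a constant vector, because $\rho_\alpha$ acts pointwise and is independent of $s$. Theorem~\ref{thm:vc-NDPG-example} then yields global two-sided bounds \eqref{eq:reliability}--\eqref{eq:efficiency}. When additionally $f\in P_0$, it yields \eqref{eq:ndpg-vc-example} with mesh- and $\alpha$-independent constants. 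The uniformity in $\alpha$ follows because all constants depend only on $\alpha_1,\alpha_2,\beta_1,\beta_2,c_P$, and because $\|\fp\|_{L(\Va)}$ is mesh-independent.

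The only step needing care is the bookkeeping in $c_A^{-2}$; the rest is direct substitution.
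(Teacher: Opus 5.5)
Your proposal is correct and follows essentially the paper's route. Applying \cite[Lemma~4.4]{MDPGpaper} pointwise a.e.\ with $\alpha\equiv\alpha(x)$ gives \eqref{eq:globalized-sigma}, which is \eqref{eq:Q1}. The lower-order constants vanish. The constants in \eqref{eq:Cd-c*-constants} come from substituting $\lambda=C_\gamma=0$ and $c_0=\mu_1$ into \eqref{eq:frozen-constants}, and your computation of $C_A$ and $c_A^{-2}$ checks out. Condition \eqref{eq:P0-preserving} holds for piecewise constant $\alpha$ exactly as the paper argues in \S\ref{ssec:quasilin-discussion}.
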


\begin{remark}[$\Aa$ is not \Frechet\ differentiable unless
  $\rho_\alpha$ is affine]
  \label{rem:no-frechet}
  The present setting, in which $\rho_\alpha = \sigma_\alpha^{-1}$ is a
  function of a single vector argument, is a convenient one in which
  to record why the G\^ateaux differentiability required in
  Assumption~\ref{asm:NDPGdiff}{\rm(b)} was not strengthened to
  \Frechet\ differentiability: the latter simply fails.  Suppose
  $\rho_\alpha$ is not affine.  Then there are a $V \in \bb R^d$, a
  $\eta>0$, and a direction vector $e \in \bb R^d$ with $|e|=1$,
  such that the vector
  $ R = \rho_\alpha(V + \eta e) - \rho_\alpha( V) -
  d\rho_\alpha(V)(\eta e) \ne 0 $. Let $\varpi_V$ be a smooth flux such
  that $\varpi_V \equiv V$ in a ball $B$ contained in an element $K$ of
  $\oh$.  For small $\veps>0$, let $B_\veps \subset B$ be the
  concentric ball of radius $\veps$.  Let $\chi_\veps$ be a smooth
  scalar function with $0 \le \chi_\veps \le 1$, $\chi_\veps \equiv 1$
  on $B_\veps$, and support contained in $B_{(1+\theta)\veps}$, for a
  $\theta \in (0,1)$ chosen below independently of $\veps$.  Consider
  trial functions $\clw =(\varpi_V, w, \hat w, \hat\varpi_n)$ and
  $\clz_\veps = (\eta e \chi_\veps, 0, 0, 0)$ in $\Ua$ and test
  function $\clv = (r, 0)$ with $r \in \Hdivoh$ supported in
  $B_{2\veps} \subset K$ satisfying $\div r = 0$, $r \equiv R$ on
  $B_{(1+\theta)\veps}$, and $|r| \lesssim |R|$ throughout (e.g., make
  $r$ by extending $R$ as the flux of the solution of a Neumann
  problem on $B_{2\veps}\setminus B_{(1+\theta)\veps}$).
  By~\eqref{eq:DPGinfiniteBF}, \eqref{eq:dA}, and that
  $\chi_\veps \equiv 1$ and $r \equiv R$ on $B_\veps$,
  \begin{gather*}
    \nonumber
    \ip{\Aa(\clw + \clz_\veps) - \Aa(\clw)
      - d\Aa(\clw)(\clz_\veps), \clv}
    \\
    = \big(\rho_\alpha(\varpi_V + \eta e \chi_\veps)
    - \rho_\alpha(\varpi_V)
    - d\rho_\alpha(\varpi_V)(\eta e \chi_\veps), r\big)_{\oh}
    \\
    = (g, r)_{L_2(B_{(1+\theta)\veps})}
    =  |R|^2 |B_\veps| + E_\veps,
  \end{gather*}
  because the function
  $g = \rho_\alpha(\varpi_V + \eta e \chi_\veps)
  - \rho_\alpha(\varpi_V) -
  d\rho_\alpha(\varpi_V)(\eta e \chi_\veps)$ vanishes off
  $\supp \chi_\veps$ and equals $R$ in $B_\veps$. Here $E_\veps$
  equals the integral of $g \cdot r$ over the shell
  $B_{(1+\theta)\veps} \setminus B_\veps$, whose volume is
  $\eqsim \theta \veps^d$.  There $|r| = |R|$ and
  $|g| \le 2\alpha_2\beta_1^{-1}\eta$ by~\eqref{eq:globalized-rho}, so
  $|E_\veps| \lesssim \theta\, \eta\, |R| \veps^d$.  As
  $|B_\veps| \eqsim \veps^d$, choosing $\theta$ small enough in terms
  of the fixed ratio $|R|/\eta$ (independently of $\veps$) gives
  $|R|^2 |B_\veps| + E_\veps\gtrsim \veps^d |R|^2$.  Since
  $\| \clv\|_{\Va} = \| r \|_{L_2} \lesssim |R| \veps^{d/2}$ and
  $\| \clz_\veps\|_{\Ua} \lesssim \eta \veps^{d/2}$,
    \[
    \frac{\|\Aa(\clw + \clz_\veps) - \Aa(\clw)
      - d\Aa(\clw)(\clz_\veps)\|_{\Vap}}
    {\|\clz_\veps\|_{\Ua}}
    \;\gtrsim\;
    \frac{\veps^{d}\,|R|^2 }{ |R| \veps^{d/2} }
    \frac{1}{\eta\, \veps^{d/2}}
    \;=\; \frac{|R|}{\eta},
  \]
  which does not vanish as $\veps \to 0$, even though
  $\|\clz_\veps\|_{\Ua} \to 0$.  Hence $\Aa$ is not \Frechet\
  differentiable, even though it is G\^ateaux differentiable as proved
  in~(b): indeed, the direction $\clz_\veps/\|\clz_\veps\|_{\Ua}$
  moves in $\Ua$ with $\veps$, whereas along any fixed direction, the
  derivative exists. This lack of \Frechet\ differentiability reflects
  a general obstruction: a Nemytskii operator generated by a function
  $f$ is \Frechet\ differentiable from $L_p$ into $L_p$ only if $f$ is
  affine~\cite{AppellZabrejko90}. Also recall that a G\^ateaux
  differentiable map whose derivative is continuous in the operator
  norm is automatically \Frechet\ differentiable. Therefore our $d\Aa$
  cannot be operator-norm continuous, which is the reason for the
  caveat on~{\rm(c$'$)} raised in Remark~\ref{rem:sufficient-c}.
\end{remark}

\subsection{Semilinear parametric examples}
\label{sec:semilinear}
At the opposite extreme, the leading-order operator may be linear,
with all the nonlinearity confined to additive lower-order terms.
This is the model problem of~\cite{VeraHeuerPraet2026},
\begin{equation}
  \label{eq:semilin-classical}
  -\dive \bigl( \kappa_\alpha \nabla u + g_\alpha(u)\, b_\alpha \bigr)
  + \gamma_\alpha(u) = f \quad \text{ in } \om,
  \qquad u = 0 \quad \text{ on } \partial\om,
\end{equation}
which we treat here to exhibit the reach of
Theorem~\ref{thm:NDPG-gen-nonlinear-dpg} and to permit
comparison.  Here the
parameter $\alpha$ in some compact set $\cQ$ determines a diffusion tensor
$\kappa_\alpha$, a constant advection vector $b_\alpha$, and two
scalar nonlinearities $g_\alpha, \gamma_\alpha$
(say as in Example~\ref{ex:semilin-vhp} below).  Problem~\eqref{eq:semilin-classical} is~\eqref{eq:4} with
\begin{equation}
  \label{eq:semilin-sigma}
  \sigma_\alpha(p,s) := - \kappa_\alpha\, p - g_\alpha(s)\, b_\alpha,
\end{equation}
and the corresponding inverse flux is available in closed form,
$\rho_\alpha(q,s) = -\kappa_\alpha^{-1}( q + g_\alpha(s) b_\alpha)$.

\begin{corollary}
  \label{cor:semilin-instance}
  Suppose that, for every $\alpha \in \cQ$, the tensor
  $\kappa_\alpha \in L_\infty(\om;\bb R^{d\times d})$ is symmetric with
  spectrum contained in $[\kappa_1,\kappa_2]$ for a.e.~$x \in \om$,
  the advection vector $b_\alpha \in \bb R^d$ is constant with
  $|b_\alpha| \le b_*$, and $g_\alpha, \gamma_\alpha \in C^1(\bb R)$
  satisfy $\sup |g_\alpha'| \le C_g$ and
  $\sup|\gamma_\alpha'| \le C_\gamma$, with
  $\gamma_\alpha' \ge -C_\gamma^-$.  Then~\eqref{eq:semilin-sigma}
  satisfies~\eqref{eq:Q1}--\eqref{eq:Q3} of Hypothesis~\ref{hyp:sigma} with
  \[
    \mu_1 = \kappa_1,
    \qquad
    \mu_2 = \kappa_2,
    \qquad
    \lambda = C_g\, b_*,
  \]
  and $C_\gamma, C_\gamma^-$ as given, so that whenever the smallness
  condition~\eqref{eq:Q4}
  \begin{equation}
    \label{eq:semilin-small}
    c_0 = \kappa_1
    - \frac{C_g\, b_*}{\sqrt{c_P}}
    - \frac{C_\gamma^-}{c_P}
    \; > \; 0
  \end{equation}
  holds, the NFOSLS loss is variationally correct, i.e.,
  \eqref{eq:nfosls-vc-example} holds with
  \begin{equation}
    \label{eq:Cd-c*-semilin}
    \begin{aligned}
      \CdB^2 & = 2 \max\Bigl( 1, \;
      \Bigl( \kappa_2 + \frac{C_g\, b_*}{c_P^{1/2}} \Bigr)^2
      + \frac{C_\gamma^2}{c_P} \Bigr),
      \\
      c_*^{-2} & = \Bigl( 1 + \frac 1 {c_P}\Bigr)
      \left[
        \Bigl( 1 + \frac{\kappa_2 + C_g\, b_*\, c_P^{-1/2}}{c_0}\Bigr)^2
        + \Bigl( c_P^{1/2} + \frac{C_\gamma}{c_P^{1/2}\, c_0}\Bigr)^2
        + \frac{1}{c_0^2}
      \right],
    \end{aligned}
  \end{equation}
  with $c_0$ as in~\eqref{eq:semilin-small},
    and the NDPG loss is variationally correct when $\kappa_\alpha$ and $f$ are constant on each element of $\oh$.
\end{corollary}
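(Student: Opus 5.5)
The plan is to reduce everything to Hypothesis~\ref{hyp:sigma} and then quote the general results, exactly as was done for Corollary~\ref{cor:flux-instance}. First compute the partial derivatives of the flux~\eqref{eq:semilin-sigma}: $\partial_p\sigma_\alpha(p,s) = -\kappa_\alpha$ and $\partial_s\sigma_\alpha(p,s) = -g_\alpha'(s)\,b_\alpha$. Both are continuous in $(p,s)$ for a.e.~$x$, since $g_\alpha\in C^1$. The spectral bounds on $\kappa_\alpha$ give~\eqref{eq:Q1} with $\mu_1=\kappa_1$ and $\mu_2=\kappa_2$. The matrix $-\partial_p\sigma_\alpha=\kappa_\alpha$ is symmetric, as Lemma~\ref{lem:rho-exists} requires. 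Next, $|\partial_s\sigma_\alpha|\le C_g b_*$ gives~\eqref{eq:Q2} with $\lambda = C_g b_*$. Condition~\eqref{eq:Q3} is assumed verbatim. Substituting these constants into~\eqref{eq:Q4} yields~\eqref{eq:semilin-small}. One technicality: Hypothesis~\ref{hyp:sigma} was phrased for $\sigma_\alpha$ independent of $x$. As in Corollary~\ref{cor:flux-instance}, I would note that all the proofs (Lemmas~\ref{lem:quasilinear-wellposed}--\ref{lem:frozen-operator} and the propositions) hold pointwise a.e.\ in $x$ and only use the bounds. So an $L_\infty$ spatial dependence of $\kappa_\alpha$ is harmless, provided the mean coefficients~\eqref{eq:means} remain measurable. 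They do, because they are built from Carath\'eodory integrands.

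With Hypothesis~\ref{hyp:sigma} in hand, Theorem~\ref{thm:vc-NFOSLS-example} applies directly. It gives~\eqref{eq:nfosls-vc-example} with $\CdB=C_A$ and $c_*=c_A$ from~\eqref{eq:frozen-constants}. Inserting $\mu_2=\kappa_2$ and $\lambda=C_g b_*$ into the formulas for $C_A^2$ and $c_A^{-2}$ reproduces~\eqref{eq:Cd-c*-semilin} term by term; this is a routine substitution.

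For the NDPG claim I would invoke Theorem~\ref{thm:vc-NDPG-example}, so the remaining work is to check~\eqref{eq:P0-preserving}. This is the only step that needs any thought, and I expect it to be the main (though modest) obstacle. Take $q_h\in P_0^d$ and $u_h\in P_0$. On each element $K$, the value $g_\alpha(u_h)$ is a constant, $b_\alpha$ is a constant vector, and $\kappa_\alpha|_K$ is a constant matrix by the hypothesis that $\kappa_\alpha$ is elementwise constant. Hence $\rho_\alpha(q_h,u_h) = -\kappa_\alpha^{-1}(q_h+g_\alpha(u_h)b_\alpha)$ is constant on $K$, so it lies in $P_0^d$. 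Likewise $\gamma_\alpha(u_h)$ is constant on $K$, so it lies in $P_0$. Without elementwise-constant $\kappa_\alpha$ this would fail, which explains the extra hypothesis. Proposition~\ref{prop:fortin-a-verif} then supplies Assumption~\ref{asm:fortin-a}. Theorem~\ref{thm:vc-NDPG-example} gives~\eqref{eq:reliability}--\eqref{eq:efficiency} on all of $\Uah$. When $f\in P_0$ we have $\osc(\ell)=0$, which yields the two-sided bound~\eqref{eq:ndpg-vc-example}, i.e., variational correctness with mesh- and $\alpha$-independent constants.
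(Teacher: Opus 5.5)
Your proposal is correct and follows essentially the same route as the paper, which reads off $-\partial_p\sigma_\alpha = \kappa_\alpha$ and $\partial_s\sigma_\alpha = -g_\alpha'(s)\,b_\alpha$ to verify \eqref{eq:Q1}--\eqref{eq:Q3} and then quotes Theorems~\ref{thm:vc-NFOSLS-example} and~\ref{thm:vc-NDPG-example}; the paper checks~\eqref{eq:P0-preserving} through the closed-form $\rho_\alpha$ in \S\ref{ssec:quasilin-discussion}, exactly as you do. Your remark that an a.e.\ $x$-dependence of $\kappa_\alpha$ is harmless is a clarification the paper leaves implicit.
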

\begin{proof}
  This is immediate from $-\partial_p\sigma_\alpha = \kappa_\alpha$ and
  $\partial_s\sigma_\alpha = -g_\alpha'(s)\, b_\alpha$.
\end{proof}

\begin{example}
  \label{ex:semilin-vhp}
  Let $\om = (0,1)^2$, subdivided into $m \times m$ congruent squares
  $\om_1, \dots, \om_{m^2}$, and let
  \[
    \cQ = \Big\{ \alpha = (\alpha^{(1)}, \dots, \alpha^{(m^2)}) :
    \alpha^{(i)} \in [\kappa_1, \kappa_2] \Big\},
    \qquad
    \kappa_\alpha(x) = \Big( \sum_{i=1}^{m^2} \alpha^{(i)} \chi_i(x) \Big) I,
  \]
  with $\chi_i$ the indicator function of $\om_i$ as
  in~\eqref{eq:Qdefn}.  In~\eqref{eq:semilin-classical}, fix
  $b_\alpha = (1,2)^T$, $g_\alpha(t) = \cos t$, and
  $\gamma_\alpha(t) = \arctan t$,
  independently of $\alpha$,
  to get the  data of the first
  numerical example of~\cite{VeraHeuerPraet2026}.  Then $C_g = 1$,
  $b_* = \sqrt 5$, and $\gamma_\alpha$ is increasing, so
  $C_\gamma^- = 0$ and $C_\gamma = 1$.  Since $c_P = 2\pi^2$ on the
  unit square, \eqref{eq:semilin-small} reads
  $\kappa_1 > \sqrt{5}/(\pi \sqrt 2)$, giving the
  coefficient range
  $[\kappa_1, \kappa_2]$ where
  Hypothesis~\ref{hyp:sigma} holds.
\end{example}

\subsection{Discussion}
\label{ssec:quasilin-discussion}

The contrast between Theorems~\ref{thm:vc-NDPG-example}
and~\ref{thm:vc-NFOSLS-example} summarizes the trade-offs.  The NFOSLS result places no
restriction on $\alpha$, on the mesh, or even on the existence of a
mesh, so it certifies the error of an arbitrary
$\Hdiv \times \Ho^1$-conforming neural network output.  The
NDPG bound
of Theorem~\ref{thm:vc-NDPG-example}
holds in the weaker ultraweak norm on the interior variables,
but its Fortin argument,
through~\eqref{eq:P0-preserving},
ties it to a mesh, to lowest-order
trial functions, and to a piecewise constant parameter.
Condition~\eqref{eq:P0-preserving} is easy to check in the two
instances of \S\ref{ssec:nonlinear-diffus-flux-model}
and~\S\ref{sec:semilinear}.  For the nonlinear flux model,
$\rho_\alpha$ is independent of $u_h$ and $\gamma_\alpha = 0$, and
$\rho_\alpha(q_h) \in P_0^d$ holds whenever $\alpha$ is piecewise
constant on $\oh$: by~\eqref{eq:rho-explicit}, or more generally since
$\rho_\alpha(x,q)$ depends on $x$ only through $\alpha(x)$, the value of
$\rho_\alpha(q_h)$ on each element is determined by the constant values
of $\alpha$ and $q_h$ there.  For the semilinear model,
$\rho_\alpha(q_h,u_h) = -\kappa_\alpha^{-1}(q_h + g_\alpha(u_h)
b_\alpha)$ lies in $P_0^d$ and $\gamma_\alpha(u_h) \in P_0$ whenever
$\kappa_\alpha$ is piecewise constant on $\oh$ and $g_\alpha,
\gamma_\alpha$ do not depend on $x$ (or are piecewise constant in $x$
on $\oh$).  The NFOSLS instance of~\S\ref{ssec:nfosls-diffusion}
carries no such restriction.

It is instructive to set our treatment beside the scheme
of~\cite{VeraHeuerPraet2026}, from which the semilinear model
problem~\eqref{eq:semilin-classical} is taken.  Their scheme adjoins to
the trial space two auxiliary $L_2$ unknowns standing for
$g_\alpha(u)$ and $\gamma_\alpha(u)$, which renders the remaining
equations linear and open to the standard linear DPG treatment, and
appends the two squared $L_2$ mismatches to the DPG residual as
least-squares penalties.  Four differences are worth recording.
First, we have no auxiliary variables,  the nonlinearity
stays inside $\Aa$, and the trial space in~\eqref{eq:DPG-A} is the
one the linear problem would use.
Second, the estimates differ in kind.  The main result
of~\cite{VeraHeuerPraet2026} is a C\'ea-type quasi-optimality
estimate for the {\em discrete minimizer}, conditional on the
existence of an exact solution and on an
Aubin--Nitsche-type closeness hypothesis.  Theorems~\ref{thm:vc-NFOSLS-example}
and~\ref{thm:vc-NDPG-example}, by contrast, do not require these conditions,
and provide  a posteriori error estimators (from which quasi-optimality of
the discrete minimizer follows as in Remark~\ref{rem:quasioptimality}) which
 bound the error of an
{\em arbitrary} member of the trial class from above and below by
the computable loss.  (Their residual is likewise local and drives
adaptivity, but no two-sided bound relating it to the error is proved
there.)  The latter feature is important  for our NN applications,
since a loss function must certify error
at predictions beyond the exact minimizer.
Third, we pay for this with the smallness
condition~\eqref{eq:semilin-small}, which the C\'ea estimate
of~\cite{VeraHeuerPraet2026} does not require.  Fourth, \cite{VeraHeuerPraet2026}~uses
a primal DPG formulation, whereas we have developed the ultraweak one.

\section{The stationary Kirchhoff model}
\label{sec:kirchhoff-ndpg}
In this section, we
verify Assumptions~\ref{asm:NDPGdiff} and~\ref{asm:fortin-a} for a
class of nonlinearities qualitatively different from the quasilinear
one of \S\ref{sec:nonlinear-diffusion}, a stationary {\em Kirchhoff
  model}, in which the diffusion coefficient is modulated by a factor
that depends on the solution $u$ only through a single global scalar,
its Dirichlet energy.  Whereas the quasilinear fluxes of \S\ref{sec:nonlinear-diffusion} act
pointwise, the coefficient here is {\em
  nonlocal}.  As we shall see,
this provides an
application where
$\Aa$ is \Frechet\ differentiable.
The main results of this section, Theorems~\ref{thm:vc-NFOSLS-Kirchhoff}
and \ref{thm:vc-NDPG-Kirchhoff},
give globally reliable and efficient error estimators
of the FOSLS type and the primal  DPG type
for the Kirchhoff model.

\subsection{The parametric Kirchhoff boundary value problem}
\label{ssec:kirchhoff-model}

Each $\alpha$ in the parameter set $\cQ$ determines two objects here: a
diffusion coefficient $\kappa_\alpha$, exactly as in the parametric
linear diffusion of \S\ref{sec:semilinear}, and a Kirchhoff function
$M_\alpha : [0,\infty) \to \bb R$.  Writing
\begin{equation}
  \label{eq:KH-energy}
  G_\alpha(w) := (\kappa_\alpha \nabla w, \nabla w)_\om,
  \qquad w \in \Ho^1,
\end{equation}
for the $\kappa_\alpha$-weighted Dirichlet energy, the Kirchhoff
problem in classical form reads
\begin{equation}
  \label{eq:KH-classical}
  -\dive\bigl( M_\alpha(G_\alpha(u))\, \kappa_\alpha \nabla u \bigr) = f
  \quad\text{ in } \om,
  \qquad
  u = 0 \quad\text{ on } \partial\om,
\end{equation}
i.e., the flux is $M_\alpha(G_\alpha(u))\,\kappa_\alpha\nabla u$: an
ordinary parametric diffusion coefficient $\kappa_\alpha$, modulated by
the scalar $M_\alpha(G_\alpha(u))$, which is constant in space but
depends on $u$ through the number $G_\alpha(u)$.
Equations of this type originate from Kirchhoff's
model~\cite{Kirchhoff1876} for the transverse vibration of an elastic
string, in which the axial tension is augmented in proportion to the
change in length of the deformed string (a quantity determined by
integrating the first derivative of $u$ over the whole string, not by
pointwise values of the
strain).
The modern functional-analytic treatment of these {\em nonlocal}
problems was initiated by~\cite{Lions1978} and has since
generated an extensive literature on equations of Kirchhoff type.
Finite element approximation of the stationary
problem~\eqref{eq:KH-classical} with $\kappa_\alpha = I$ was analyzed
in~\cite{Gudi12}, and a residual-based a posteriori estimator driving a
convergent adaptive algorithm was obtained in~\cite{PalGudi25}.  The
weak formulation of~\eqref{eq:KH-classical} finds
$u \in \Ho^1$ satisfying
\begin{equation}
  \label{eq:KH-weak}
  M_\alpha(G_\alpha(u))\, (\kappa_\alpha \nabla u, \nabla v)_\om = (f, v)_\om
  \qquad \text{ for all } v \in \Ho^1.
\end{equation}
The family $\{ (\kappa_\alpha, M_\alpha) : \alpha \in \cQ \}$ is
assumed to satisfy the
following hypothesis with $\alpha$-independent constants.  In it,
$t_- := \max(-t, 0)$ denotes the negative part of any real number~$t$.

\begin{hypothesis}
  \label{hyp:M}
  There are constants $0 < M_1 \le M_2 < \infty$,
  $C_M^* \ge C_M^- \ge 0$, and $0 < \kappa_1 \le \kappa_2 < \infty$
  such that for every $\alpha \in \cQ$:
  \begin{enumerate}[label={\rm(H\arabic*)}]
  \item $M_\alpha \in C^1([0,\infty))$ and
    $M_1 \le M_\alpha(s) \le M_2$ for all $s \ge 0$;
  \item $\sup_{s \ge 0} s\,|M_\alpha'(s)| \le C_M^*$,
    $\sup_{s \ge 0} s\,\bigl(M_\alpha'(s)\bigr)_- \le C_M^-$, and
    $M_1 > 2 C_M^-$;
  \item for each $\bar s > 0$ there is a $C_M(\bar s) < \infty$,
    independent of $\alpha$, such that $M_\alpha$ and $M_\alpha'$ are
    Lipschitz on $[0, \bar s]$ with Lipschitz constant $C_M(\bar s)$;
  \item $\kappa_\alpha \in L_\infty(\om; \bb R^{d\times d})$ is
    symmetric with spectrum contained in $[\kappa_1, \kappa_2]$ for
    a.e.~$x \in \om$.
  \end{enumerate}
\end{hypothesis}

The  decrease of $M_\alpha$ is restricted quantitatively
in~{\rm(H2)}: the first bound there merely asks that $s M_\alpha'(s)$
be bounded. In fact,  $C_M^- = 0$ whenever $M_\alpha$ is nondecreasing, in
which case the smallness condition $M_1 > 2C_M^-$ is vacuous.  This is
the physically relevant case where the  Kirchhoff tension
increases with the elongation.

\begin{example}
  \label{ex:mu-kirchhoff}
  Let $\kappa_\alpha$ and $\cQ$ be the $m^2$-dimensional
  piecewise-constant family of Example~\ref{ex:semilin-vhp}, i.e.,
  $\kappa_\alpha = \bigl( \sum_{i=1}^{m^2} \alpha^{(i)} \chi_i \bigr) I$
  with $\alpha^{(i)} \in [\kappa_1, \kappa_2]$, so that~{\rm(H4)}
  holds, and let
  \begin{equation}
    \label{eq:carrier}
    M_\alpha(s) = M(s) := a + \frac{b\, s}{1 + \veps s},
    \qquad a, b, \veps > 0,
  \end{equation}
  independently of $\alpha$.  This is the Kirchhoff--Carrier law
  $a + b s$, saturated at large $s$: the two agree to $O(\veps)$ on
  bounded ranges of $s$ and coincide as $\veps \to 0$.  The saturation
  is what buys~{\rm(H1)}, which the unbounded law $a + bs$ violates,
  while the feature that matters physically, that $M$ increases with
  the elongation, is retained.  Indeed
  $M'(s) = b (1 + \veps s)^{-2} > 0$, so $C_M^- = 0$
  and~{\rm(H2)} holds with no smallness condition whatsoever, for
  every $a, b, \veps$.  The remaining constants are $M_1 = a$,
  $M_2 = a + b/\veps$, and, since $s(1+\veps s)^{-2}$ is maximized at
  $s = 1/\veps$, $C_M^* = b/(4\veps)$.  Finally $|M'| \le b$ and
  $|M''| \le 2\veps b$ on all of $[0,\infty)$, so~{\rm(H3)} holds with
  $C_M(\bar s) = \max(b, 2\veps b)$ for every $\bar s$.

  By contrast,
  for a decreasing law such as $M_\alpha(s) = a + (1+s)^{-1}$ one finds
  $C_M^- = C_M^* = 1/4$, and~{\rm(H2)} then genuinely restricts the
  model, to $a > 1/2$.
\end{example}

Since $M_\alpha(G_\alpha(u))$ is a scalar, the nonlinear
Kirchhoff solutions can be parameterized by
solutions of  a linear problem: as the next lemma shows, each is a
rescaling of a $\kappa_\alpha$-diffusion solution by a factor
determined from one scalar equation.  For $\kappa_\alpha = I$, this
device appears in~\cite{ChipoValenVerga03} and
\cite{ChipoSavit14} extends it to the $p$-Laplacian.

\begin{lemma}[Well-posedness of~\eqref{eq:KH-weak}]
  \label{lem:kirchhoff-wellposed}
  Let Hypothesis~\ref{hyp:M} hold and $f \in L_2$.  For
  $\alpha \in \cQ$ let $w_\alpha \in \Ho^1$ be the solution of the
  linear problem
  $(\kappa_\alpha \nabla w_\alpha, \nabla v)_\om = (f, v)_\om$ for all
  $v \in \Ho^1$, and put $g_\alpha := G_\alpha(w_\alpha)$.  Then the
  function $\psi_\alpha(s) := s\, M_\alpha(s)^2$ is a strictly
  increasing bijection of $[0, \infty)$, and letting
  $s^* = s^*(\alpha) \ge 0$ denote the unique root of
  $\psi_\alpha(s^*) = g_\alpha$, the function
  \begin{equation}
    \label{eq:KH-solution}
    u(\alpha) := M_\alpha(s^*)^{-1}\, w_\alpha
  \end{equation}
  is the unique solution of~\eqref{eq:KH-weak}.  Moreover, the exact
  flux is
  $M_\alpha(G_\alpha(u(\alpha)))\, \kappa_\alpha \nabla u(\alpha) =
  \kappa_\alpha \nabla w_\alpha$, and
  \begin{equation}
    \label{eq:KH-apriori}
    G_\alpha(u(\alpha)) = s^*(\alpha) \le \frac{g_\alpha}{M_1^2},
    \qquad
    g_\alpha \le \frac{\|f\|^2_{L_2}}{c_P\, \kappa_1} =: g_*,
  \end{equation}
  so that $s^*$ and $g_\alpha$ are bounded independently of $\alpha$.
\end{lemma}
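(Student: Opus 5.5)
The plan is to reduce \eqref{eq:KH-weak} to the linear problem by a scalar rescaling argument. First I would establish that $\psi_\alpha(s)=sM_\alpha(s)^2$ is a strictly increasing bijection of $[0,\infty)$. It is $C^1$ by (H1), and
\[
\psi_\alpha'(s)=M_\alpha(s)\bigl(M_\alpha(s)+2sM_\alpha'(s)\bigr)\ge M_1\bigl(M_1-2C_M^-\bigr)>0,
\]
using $sM_\alpha'(s)\ge -s(M_\alpha'(s))_-\ge -C_M^-$ and the condition $M_1>2C_M^-$ in (H2). Moreover $\psi_\alpha(0)=0$ and $\psi_\alpha(s)\ge M_1^2 s\to\infty$. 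Hence there is a unique root $s^*$ of $\psi_\alpha(s^*)=g_\alpha$, and $s^*\le g_\alpha/M_1^2$, which is the first inequality in \eqref{eq:KH-apriori}.

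Next I would verify that the function in \eqref{eq:KH-solution} is a solution. With $u=M_\alpha(s^*)^{-1}w_\alpha$ we have $G_\alpha(u)=M_\alpha(s^*)^{-2}g_\alpha=s^*$, by the definition of $s^*$. Therefore $M_\alpha(G_\alpha(u))\kappa_\alpha\nabla u=\kappa_\alpha\nabla w_\alpha$, which gives both \eqref{eq:KH-weak} and the stated identity for the exact flux.

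For uniqueness, let $u$ be any solution and put $m:=M_\alpha(G_\alpha(u))>0$. Then $(\kappa_\alpha\nabla(mu),\nabla v)_\om=(f,v)_\om$ for all $v$, so uniqueness for the linear problem (Lax--Milgram, via (H4) and \eqref{eq:Friedrichs-std}) forces $mu=w_\alpha$. Setting $s=G_\alpha(u)$, we get $g_\alpha=G_\alpha(mu)=m^2 s=\psi_\alpha(s)$, so $s=s^*$ by injectivity of $\psi_\alpha$. Hence $u=M_\alpha(s^*)^{-1}w_\alpha$ is the function of \eqref{eq:KH-solution}.

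Finally, for the bound on $g_\alpha$, I would test the linear problem with $v=w_\alpha$ to get
\[
g_\alpha=(f,w_\alpha)_\om\le\|f\|_{L_2}\|w_\alpha\|_{L_2}\le\|f\|_{L_2}c_P^{-1/2}\|\nabla w_\alpha\|_{L_2}\le\|f\|_{L_2}(c_P\kappa_1)^{-1/2}g_\alpha^{1/2}.
\]
Squaring and dividing by $g_\alpha$ (the case $g_\alpha=0$ being trivial) yields $g_\alpha\le g_*$. The only step needing care is the monotonicity of $\psi_\alpha$, which is exactly where the smallness condition $M_1>2C_M^-$ of (H2) enters; everything else is routine.
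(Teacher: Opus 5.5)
Your proposal is correct and matches the paper's proof essentially step for step. It uses the same derivative bound $\psi_\alpha'(s)\ge M_1(M_1-2C_M^-)>0$ together with $\psi_\alpha(s)\ge M_1^2 s$ for the bijection, the same check $G_\alpha(u)=M_\alpha(s^*)^{-2}g_\alpha=s^*$, the same uniqueness argument via $m\,u=w_\alpha$ and injectivity of $\psi_\alpha$, and the same Friedrichs-plus-(H4) bound on $g_\alpha$.
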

\begin{proof}
  The function $\psi_\alpha$ is continuous, $\psi_\alpha(0) = 0$, and
  $\psi_\alpha(s) \ge M_1^2\, s \to \infty$, so $\psi_\alpha$ maps
  $[0,\infty)$ onto itself.  By Hypothesis~\ref{hyp:M}, using
  $s M_\alpha'(s) \ge - s (M_\alpha'(s))_- \ge -C_M^-$,
  \[
    \psi_\alpha'(s)
    = M_\alpha(s) \bigl( M_\alpha(s) + 2 s M_\alpha'(s) \bigr)
    \ge M_1 (M_1 - 2 C_M^-) > 0,
  \]
  so $\psi_\alpha$ is strictly increasing and the root $s^*$ exists
  uniquely.  For $u(\alpha)$ defined by~\eqref{eq:KH-solution},
  $G_\alpha(u(\alpha))
  = (\kappa_\alpha \nabla u(\alpha), \nabla u(\alpha))_\om
  = M_\alpha(s^*)^{-2} g_\alpha =
  M_\alpha(s^*)^{-2} \psi_\alpha(s^*) = s^*$, so
  \[
    M_\alpha(G_\alpha(u(\alpha)))\,
    (\kappa_\alpha \nabla u(\alpha), \nabla v)_\om
    = M_\alpha(s^*)\, M_\alpha(s^*)^{-1}
    (\kappa_\alpha \nabla w_\alpha, \nabla v)_\om
    = (f, v)_\om,
  \]
  i.e., $u(\alpha)$ solves~\eqref{eq:KH-weak}, and
  $M_\alpha(G_\alpha(u(\alpha)))\kappa_\alpha \nabla u(\alpha) =
  \kappa_\alpha \nabla w_\alpha$.  If $\tilde u$ is any solution
  of~\eqref{eq:KH-weak}, then, since
  $\tilde m := M_\alpha(G_\alpha(\tilde u))$ is positive, the
  uniqueness of $w_\alpha$ gives $\tilde u = \tilde m^{-1} w_\alpha$,
  so
  $\psi_\alpha(G_\alpha(\tilde u)) = \tilde m^2\, G_\alpha(\tilde u) =
  g_\alpha$, and the injectivity of $\psi_\alpha$ yields
  $G_\alpha(\tilde u) = s^*$, hence $\tilde u = u(\alpha)$.  Finally,
  $\psi_\alpha(s) \ge M_1^2 s$ gives $s^* \le g_\alpha/M_1^2$, while
  testing the equation for $w_\alpha$ with $v = w_\alpha$ and
  using~\eqref{eq:Friedrichs-std} and~{\rm(H4)},
  \[
    g_\alpha
    = (f, w_\alpha)_\om
    \le c_P^{-1/2} \|f\|_{L_2} \|\nabla w_\alpha\|_{L_2}
    \le c_P^{-1/2} \|f\|_{L_2} \bigl( g_\alpha/\kappa_1 \bigr)^{1/2},
  \]
  which is the second bound in~\eqref{eq:KH-apriori}.
\end{proof}

The formula~\eqref{eq:KH-solution} exhibits the exact
parameter-to-solution map as the composition of the linear parametric
map $\alpha \mapsto w_\alpha$ with a rescaling by the scalar
$M_\alpha(s^*(\alpha))^{-1}$, the latter determined by the single
nonlinear equation $\psi_\alpha(s^*) = G_\alpha(w_\alpha)$.  With
$\kappa_\alpha$ as in Example~\ref{ex:mu-kirchhoff}, this is a
genuinely $m^2$-dimensional parametric family, whose flux
$\kappa_\alpha \nabla w_\alpha$ varies with $\alpha$ as well.  At the
same time, the availability of $u(\alpha)$ in closed form, up to one
linear solve and one scalar root-finding procedure, makes the model a transparent
test problem for the learning application of the next section.

The upcoming error estimators
are analyzed through one and the same
linearization, which we record now.  For each $w \in \Ho^1$, consider the linear map
$B_w : \Ho^1 \to L_2^d$ defined by
\begin{equation}
  \label{eq:KH-frozen-form}
  B_w \zeta
  := M_\alpha(G_\alpha(w))\, \kappa_\alpha \nabla \zeta
  + 2 M_\alpha'(G_\alpha(w))\,
  (\kappa_\alpha \nabla w, \nabla \zeta)_\om\, \kappa_\alpha \nabla w ,
\end{equation}
which is nonlocal, since $\zeta$ enters the second term  through
the number $(\kappa_\alpha \nabla w, \nabla \zeta)_\om$.

\begin{lemma}[Linearization]
  \label{lem:kirchhoff-frozen}
  Let Hypothesis~\ref{hyp:M} hold and set
  \begin{equation}
    \label{eq:KH-cK}
    c_K := (M_1 - 2 C_M^-)\, \kappa_1 > 0,
    \qquad
    C_K := (M_2 + 2 C_M^*)\, \kappa_2 .
  \end{equation}
  Then, for every $w, \zeta \in \Ho^1$ and every $\alpha \in \cQ$,
  \begin{equation}
    \label{eq:KH-coercive}
    \| B_w \zeta \|_{L_2}
    \le C_K\, \| \nabla \zeta \|_{L_2},
    \qquad
    ( B_w \zeta, \nabla \zeta)_\om
    \ge c_K\, \| \nabla \zeta \|^2_{L_2} .
  \end{equation}
  Both constants are independent of $w$ and of $\alpha$.  Consequently
  the same two bounds hold for the averaged map
  $\int_0^1 B_{w^t}\, dt$ generated by an arbitrary family
  $\{ w^t : t \in [0,1] \} \subset \Ho^1$.
\end{lemma}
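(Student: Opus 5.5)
The plan is to abbreviate $m := M_\alpha(G_\alpha(w))$, $m' := M_\alpha'(G_\alpha(w))$, $s := G_\alpha(w) \ge 0$, and $\beta := (\kappa_\alpha \nabla w, \nabla \zeta)_\om$. Both bounds in~\eqref{eq:KH-coercive} will then be read off from the two terms of~\eqref{eq:KH-frozen-form}. The key device is to control the nonlocal scalar $\beta$ by Cauchy--Schwarz in the $\kappa_\alpha$-weighted inner product, which by~{\rm(H4)} is a genuine inner product on $L_2^d$. This gives
\[
  |\beta| \le s^{1/2}\,(\kappa_\alpha\nabla\zeta,\nabla\zeta)_\om^{1/2}.
\]
In this way the factor $s$ always appears next to $m'$, which is exactly what~{\rm(H2)} controls.

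\emph{Upper bound.} For the first term, $\|m\,\kappa_\alpha\nabla\zeta\|_{L_2} \le M_2\kappa_2\|\nabla\zeta\|_{L_2}$ by~{\rm(H1)} and~{\rm(H4)}. For the second term, I will use $\|\kappa_\alpha\nabla w\|_{L_2} \le \kappa_2^{1/2} s^{1/2}$, which holds because $|\kappa_\alpha^{1/2}\xi|\le\kappa_2^{1/2}|\xi|$. Together with the bound on $|\beta|$ and $(\kappa_\alpha\nabla\zeta,\nabla\zeta)_\om \le \kappa_2\|\nabla\zeta\|^2_{L_2}$, this gives
\[
  2|m'|\,|\beta|\,\|\kappa_\alpha\nabla w\|_{L_2}
  \le 2\, s|m'|\,\kappa_2\|\nabla\zeta\|_{L_2}
  \le 2C_M^*\kappa_2\|\nabla\zeta\|_{L_2}.
\]
Summing the two terms yields $C_K$.

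\emph{Coercivity.} Testing with $\nabla\zeta$ gives
\[
  (B_w\zeta,\nabla\zeta)_\om = m\,(\kappa_\alpha\nabla\zeta,\nabla\zeta)_\om + 2m'\beta^2 .
\]
If $m'\ge0$, the second term can simply be dropped. If $m'<0$, the bound $\beta^2 \le s\,(\kappa_\alpha\nabla\zeta,\nabla\zeta)_\om$ gives $2m'\beta^2 \ge -2\,s(m')_-\,(\kappa_\alpha\nabla\zeta,\nabla\zeta)_\om$. In either case,
\[
  (B_w\zeta,\nabla\zeta)_\om \ge (M_1-2C_M^-)(\kappa_\alpha\nabla\zeta,\nabla\zeta)_\om \ge (M_1-2C_M^-)\kappa_1\|\nabla\zeta\|^2_{L_2},
\]
and $M_1 - 2C_M^- > 0$ by~{\rm(H2)}. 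The one subtle point is that the negative-part bound must be applied with the \emph{same} weighted quantity $(\kappa_\alpha\nabla\zeta,\nabla\zeta)_\om$ that multiplies $m$. Only at the end should one pass to $\kappa_1$; if the lower eigenvalue bound were used too early, the weaker condition $M_1\kappa_1>2C_M^-\kappa_2$ would result instead.

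\emph{Averaged map.} For a family $\{w^t\}$, the map $\zeta\mapsto\int_0^1 B_{w^t}\zeta\,dt$ is a Bochner integral in $L_2^d$. The norm bound follows from Minkowski's integral inequality, and the coercivity bound from moving the inner product inside the integral and applying the pointwise-in-$t$ estimate. Some measurability or integrability of $t\mapsto B_{w^t}\zeta$ is tacitly needed here. For the straight segments used later this comes from the continuity of $t\mapsto G_\alpha(w^t)$ together with $M_\alpha, M_\alpha'\in C^0$ by~{\rm(H1)}. I expect this step to need only a remark, so the only real obstacle is the ordering issue in the coercivity step above.
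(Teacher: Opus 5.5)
Your proof is correct and follows the paper's argument essentially line by line. Both proofs use the same steps:
\begin{itemize}
\item the Cauchy--Schwarz bound in the $\kappa_\alpha$-weighted inner product for the nonlocal scalar $(\kappa_\alpha\nabla w,\nabla\zeta)_\om$;
\item a term-by-term estimate of $B_w\zeta$ producing $C_K$;
\item in the coercivity step, factoring out $G_\alpha(\zeta)$ before passing to $\kappa_1$;
\item the pointwise-in-$t$ argument for the averaged map.
\end{itemize}
Your remark on the measurability of $t\mapsto B_{w^t}\zeta$ is a point the paper leaves implicit.
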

\begin{proof}
  Since $\kappa_\alpha$ is symmetric and positive definite by
  {\rm(H4)}, the form $(\kappa_\alpha \nabla \cdot, \nabla\cdot)_\om$
  is an inner product on $\Ho^1$, so
  \begin{equation}
    \label{eq:KH-CS}
    | (\kappa_\alpha \nabla w, \nabla \zeta)_\om |
    \le G_\alpha(w)^{1/2} G_\alpha(\zeta)^{1/2},
    \qquad
    \| \kappa_\alpha \nabla w \|_{L_2}^2
    \le \kappa_2\, G_\alpha(w).
  \end{equation}
  Hence the second term
  of~\eqref{eq:KH-frozen-form} has $L_2$ norm at most
  $2 |M_\alpha'(G_\alpha(w))|\, G_\alpha(w)\, \kappa_2^{1/2}
  G_\alpha(\zeta)^{1/2} \le 2 C_M^* \kappa_2 \|\nabla\zeta\|_{L_2}$
  by {\rm(H2)} and {\rm(H4)}, while the first is at most
  $M_2 \kappa_2^{1/2} G_\alpha(\zeta)^{1/2} \le M_2 \kappa_2
  \|\nabla\zeta\|_{L_2}$ by {\rm(H1)}.  This gives the first bound
  of~\eqref{eq:KH-coercive}.
  For the second, \eqref{eq:KH-CS} gives
  \begin{align*}
    ( B_w \zeta, \nabla \zeta)_\om
    & = M_\alpha(G_\alpha(w))\, G_\alpha(\zeta)
      + 2 M_\alpha'(G_\alpha(w))
      (\kappa_\alpha \nabla w, \nabla\zeta)^2_\om
    \\
    & \ge \bigl[ M_\alpha(G_\alpha(w))
      - 2\bigl(M_\alpha'(G_\alpha(w))\bigr)_- G_\alpha(w) \bigr]
      G_\alpha(\zeta).
  \end{align*}
  By {\rm(H1)},
  {\rm(H2)} and {\rm(H4)}, the right-hand side is at least
  $(M_1 - 2C_M^-) \kappa_1 \|\nabla\zeta\|^2_{L_2}$, which is the
  second bound  of~\eqref{eq:KH-coercive}.
  Since both hold pointwise in $t$ with
  $w$-independent constants, they also hold for the average.
\end{proof}

\subsection{The NFOSLS loss function}
\label{ssec:kirchhoff-nfosls}

We take the simpler least-squares instance first.
Introducing the flux $q := M_\alpha(G_\alpha(u))\kappa_\alpha\nabla u$,
the Kirchhoff problem~\eqref{eq:KH-classical} is the first-order system
$q = M_\alpha(G_\alpha(u))\kappa_\alpha\nabla u$, $-\dive q = f$.  (Here,
unlike in~\S\ref{sec:nonlinear-diffusion}, where $\sigma_\alpha$ is the
negative of the flux, $q$ denotes the flux itself.)  Accordingly we set
\begin{subequations}
  \label{eq:KH-FOS}
  \begin{align}
    & \Ua = \Hdiv \times \Ho^1,
      \qquad
      \Va = L_2^d \times L_2,
    \\
    \label{eq:KH-fos-A}
    & \Aa(q,u) = \bigl( q - M_\alpha(G_\alpha(u))\, \kappa_\alpha \nabla u, \; -\dive q \bigr),
      \qquad
      \ell = (0,f),
  \end{align}
\end{subequations}
with the graph norm~\eqref{eq:fos-norms} on $\Ua$, with $\Va$
identified with its own dual through the $L_2$ inner product, and
with $D = \Ua$.  Taking $\Vah = \Va$ and $\fp = \mathrm{id}$ in
Definition~\ref{defn:ndpg}, the resulting NFOSLS loss function is
\begin{equation}
  \label{eq:KH-fos-loss}
  \Ja{(q,u)}
  = \bigl\| q - M_\alpha(G_\alpha(u))\, \kappa_\alpha \nabla u \bigr\|^2_{L_2}
  + \bigl\| \dive q + f \bigr\|^2_{L_2} .
\end{equation}
As in \S\ref{ssec:nfosls-diffusion}, the trial class $\Uah$ enters
neither~\eqref{eq:KH-fos-loss} nor the analysis, so it may be any
subset of $\Ua$, finite element or meshless.

This is arguably the least expensive loss function we have seen so far.
Since the entire nonlinearity is in the single scalar
$m := M_\alpha(G_\alpha(u))$, evaluation just requires
one global reduction (i.e., a single sum over all mesh elements) to
compute $G_\alpha(u) = (\kappa_\alpha\nabla u, \nabla u)_\om$ followed by one evaluation
of $M_\alpha$.  With $m$ in hand, \eqref{eq:KH-fos-loss} is a
quadratic functional of $(q,u)$, so no special quadrature is needed
for nonpolynomial constitutive terms (in contrast to the
quasilinear NFOSLS loss~\eqref{eq:fos-loss}, where
$\sigma_\alpha(\nabla w, w)$ must be evaluated at every quadrature
point).  Differentiation with respect to network parameters brings in,
beyond linear terms, only the scalar $M_\alpha'(G_\alpha(u))$.

\begin{proposition}[Verification of Assumption~\ref{asm:NDPGdiff} for
  the Kirchhoff NFOSLS setting]
  \label{prop:kirchhoff-nfosls}
  Let Hypothesis~\ref{hyp:M} hold.  Then the operator $\Aa$
  of~\eqref{eq:KH-fos-A} satisfies all conditions of
  Assumption~\ref{asm:NDPGdiff} with $D = \Ua$ and
  $\delta = +\infty$.  With $c_K$ and $C_K$ as in~\eqref{eq:KH-cK},
  one may take
  \[
    \CdB^2 = 2\max\bigl(1, C_K^2\bigr),
    \qquad
    c_*^{-2} = \Bigl(1 + \frac{1}{c_P}\Bigr)
    \left[ \Bigl( 1 + \frac{C_K}{c_K} \Bigr)^2
      + c_P + \frac{1}{c_K^2} \right].
  \]
\end{proposition}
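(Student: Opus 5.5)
The plan mirrors the proof of Proposition~\ref{prop:nfosls-verif}, with the pointwise mean-value coefficients replaced by the nonlocal averaged map of Lemma~\ref{lem:kirchhoff-frozen}.

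For part~(a), we take $u = u(\alpha)$ from Lemma~\ref{lem:kirchhoff-wellposed} and set $q := M_\alpha(G_\alpha(u))\kappa_\alpha\nabla u = \kappa_\alpha\nabla w_\alpha \in L_2^d$. The weak equation~\eqref{eq:KH-weak} gives $-\dive q = f$ distributionally, so $q\in\Hdiv$ and $\Aa(q,u)=\ell$. Since $\Aa$ is defined on all of $\Ua$, we take $D=\Ua$.

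For part~(b), the only nonlinear term is $N(u):=M_\alpha(G_\alpha(u))\kappa_\alpha\nabla u$. The map $t\mapsto G_\alpha(u+tz)$ is a real quadratic polynomial with derivative $2(\kappa_\alpha\nabla u,\nabla z)_\om$ at $t=0$, and $M_\alpha\in C^1$. Hence the difference quotient of $N$ converges in $L_2$ to $B_u z$ of~\eqref{eq:KH-frozen-form}. Here no dominated convergence is needed: the quotient is a scalar difference quotient times $\kappa_\alpha\nabla u$, plus $M_\alpha(G_\alpha(u+tz))\kappa_\alpha\nabla z$. Thus
\[
  d\Aa(q,u)(\varsigma,z)=(\varsigma - B_u z,\,-\dive\varsigma).
\]
This is continuous along segments because $t\mapsto G_\alpha(u+tz)$ is continuous and $M_\alpha, M_\alpha'$ are continuous. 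The bound~\eqref{linvc} follows from $\|\varsigma-B_uz\|^2_{L_2}+\|\dive\varsigma\|^2_{L_2}\le 2\|\varsigma\|^2_{L_2}+2C_K^2\|\nabla z\|^2_{L_2}+\|\dive\varsigma\|^2_{L_2}$ and the first bound of~\eqref{eq:KH-coercive}. This gives $\CdB^2=2\max(1,C_K^2)$.

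For part~(c), we take arbitrary $\clz^i=(\varsigma^i,z^i)$, set $e_\varsigma = \varsigma^0-\varsigma^1$ and $e_z=z^0-z^1$, and put $z^t=z^0-te_z$. The FTC applied to the $C^1$ map $t\mapsto N(z^t)$ gives $N(z^0)-N(z^1)=\bar B e_z$ with $\bar B:=\int_0^1 B_{z^t}\,dt$. This holds because $\frac{d}{dt}N(z^t)=-B_{z^t}e_z$, and the integral converges in $L_2$ since the integrand is continuous in $t$. Hence
\[
  \Aa(\clz^0)-\Aa(\clz^1)=(e_\varsigma-\bar Be_z,\,-\dive e_\varsigma).
\]
It remains to bound $\|(g_1,g_2)\|_{L_2}$ from below, where $g_1=e_\varsigma-\bar B e_z$ and $g_2=-\dive e_\varsigma$. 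We repeat the argument of Lemma~\ref{lem:frozen-operator} with $S\nabla$ replaced by $\bar B$. Pairing $e_\varsigma=g_1+\bar B e_z$ with $\nabla e_z$ and integrating by parts gives
\[
  (\bar B e_z,\nabla e_z)_\om = (e_\varsigma,\nabla e_z)_\om-(g_1,\nabla e_z)_\om = (g_2,e_z)_\om-(g_1,\nabla e_z)_\om.
\]
By the averaged coercivity in Lemma~\ref{lem:kirchhoff-frozen} and~\eqref{eq:Friedrichs-std}, this yields $c_K\|\nabla e_z\|_{L_2}\le \|g\|_\star$. Then $\|e_\varsigma\|_{L_2}\le (1+C_K/c_K)\|g\|_\star$, while $\|\dive e_\varsigma\|_{L_2}=\|g_2\|_{L_2}\le c_P^{1/2}\|g\|_\star$. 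Summing the squares and using $\|g\|_\star^2\le(1+c_P^{-1})\|g\|^2_{L_2}$ produces exactly the stated $c_*^{-2}$. Because this holds for every pair, we get $\delta=+\infty$.

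The main obstacle is justifying the FTC step for the nonlocal map in $L_2$. This requires differentiability of $t\mapsto N(z^t)$ at every $t$ together with continuity of $t\mapsto B_{z^t}e_z$. Both follow since $G_\alpha(z^t)$ is a real polynomial in $t$ and $M_\alpha\in C^1$. An alternative that avoids any $L_2$-valued integral is to test against a fixed $\clv$ and apply the scalar FTC, as in~\eqref{eq:ftc}. The remaining steps are routine.
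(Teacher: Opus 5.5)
Your proposal is correct and follows essentially the same route as the paper: part (a) takes the same flux, part (b) uses the same derivative $(\varsigma - B_u z,\,-\dive\varsigma)$ and the same bound, and part (c) uses the same averaged map $\bar B$, integration by parts, and coercivity argument, giving exactly the stated $c_*$. The only difference is in (b), where you check the G\^ateaux limit directly from the explicit difference quotient, while the paper obtains it from Fr\'echet differentiability via the chain rule; this change is immaterial.
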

\begin{proof}
  {\em Proof of} {\rm(a)}.  Let $u = u(\alpha)$ be the solution
  of~\eqref{eq:KH-weak} given by
  Lemma~\ref{lem:kirchhoff-wellposed}.  Hence  the flux
  $q := M_\alpha(G_\alpha(u))\kappa_\alpha\nabla u = \kappa_\alpha\nabla w_\alpha$ satisfies
   $-\dive (\kappa_\alpha \nabla w_\alpha) = f \in L_2$
  distributionally, so $q \in \Hdiv$ and $\clu := (q, u) \in \Ua$
  satisfies $\Aa(\clu) = \ell$, i.e., $\Ra \clu = 0$.

  {\em Proof of} {\rm(b)}.  Since $G$ is a continuous quadratic form
  on $\Ho^1$ and $M_\alpha \in C^1$ by {\rm(H1)}, the map
  $u \mapsto M_\alpha(G_\alpha(u))\kappa_\alpha\nabla u$ is \Frechet\ differentiable from
  $\Ho^1$ into $L_2^d$, with derivative $B_u$
  of~\eqref{eq:KH-frozen-form}.  Hence $\Aa$ is \Frechet\ differentiable, and a
  fortiori G\^ateaux differentiable at every $(q,u) \in \Ua$, with
  \begin{equation}
    \label{eq:KH-fos-dA}
    d\Aa(q,u)(\varsigma,z)
    = \bigl( \varsigma - B_u z, \; -\dive \varsigma \bigr),
    \qquad (\varsigma,z) \in \Ua .
  \end{equation}
  By the first bound of~\eqref{eq:KH-coercive},
  $\|\varsigma - B_u z\|^2_{L_2} \le 2\|\varsigma\|^2_{L_2} + 2 C_K^2
  \|\nabla z\|^2_{L_2}$, so
  $\|d\Aa(q,u)(\varsigma,z)\|^2_{\Vap}
  \le \CdB^2 \|(\varsigma,z)\|^2_{\Ua}$ with
  $\CdB$ as claimed, which is~\eqref{linvc}.

  {\em Proof of} {\rm(c)}.  Let $\clz^i = (\varsigma^i, z^i) \in \Ua$,
  $i = 0,1$, be arbitrary, put $e_\varsigma = \varsigma^0 - \varsigma^1$,
  $e_z = z^0 - z^1$,
  and let $z^t := z^0 - t\, e_z$.  By {\rm(b)} the map
  $t \mapsto M_\alpha(G_\alpha(z^t))\kappa_\alpha \nabla z^t$ is continuously
  differentiable from $[0,1]$ into $L_2^d$, with derivative
  $-B_{z^t} e_z$, so the FTC gives
  \[
    M_\alpha(G_\alpha(z^0))\kappa_\alpha\nabla z^0
    - M_\alpha(G_\alpha(z^1))\kappa_\alpha\nabla z^1
    = \bar B\, e_z,
    \qquad
    \bar B := \int_0^1 B_{z^t}\, dt .
  \]
  Hence $\Aa(\clz^0) - \Aa(\clz^1) = (r_1, r_2)$ with
  $r_1 = e_\varsigma - \bar B e_z$ and $r_2 = -\dive e_\varsigma$; set
  $A_*^2 := \|r_1\|^2_{L_2} + \|r_2\|^2_{L_2}$.  Since
  $e_z \in \Ho^1$ and $e_\varsigma \in \Hdiv$, integration by parts gives
  \[
    (e_\varsigma, \nabla e_z)_\om = -(\dive e_\varsigma, e_z)_\om = (r_2, e_z)_\om,
  \]
  so that
  \[
    ( \bar B e_z, \nabla e_z)_\om
    = (e_\varsigma, \nabla e_z)_\om - (r_1, \nabla e_z)_\om
    = (r_2, e_z)_\om - (r_1, \nabla e_z)_\om
    \le A\, \|\nabla e_z\|_{L_2},
  \]
  where $A := \|r_1\|_{L_2} + c_P^{-1/2}\|r_2\|_{L_2}$
  and~\eqref{eq:Friedrichs-std} was used.  By the second bound
  of~\eqref{eq:KH-coercive}, applied to the average $\bar B$, the
  left-hand side is at least $c_K \|\nabla e_z\|^2_{L_2}$, whence
  $\|\nabla e_z\|_{L_2} \le A / c_K$.  Consequently
  \[
    \|e_\varsigma\|_{L_2}
    = \| r_1 + \bar B e_z \|_{L_2}
    \le \|r_1\|_{L_2} + C_K \|\nabla e_z\|_{L_2},
    \qquad
    \|\dive e_\varsigma\|_{L_2} = \|r_2\|_{L_2},
  \]
  and, since $\|r_1\|_{L_2} \le A$ and
  $\|r_2\|_{L_2} \le \sqrt{c_P}\, A$, adding the three bounds and
  using $A^2 \le (1 + c_P^{-1}) A_*^2$ gives
  $\|\clz^0 - \clz^1\|^2_{\Ua} \le c_*^{-2} A_*^2$.  This
  is~\eqref{eq:c-prime} for all $\clz^1 \in \Ua$, i.e., with
  $\delta = +\infty$; as before, its global validity also shows that
  the $\clu$ of part {\rm(a)} is the unique element of $\Ua$ at which
  $\Ra\cdot$ vanishes.
\end{proof}

\begin{theorem}[Variational correctness of the Kirchhoff NFOSLS loss]
  \label{thm:vc-NFOSLS-Kirchhoff}
  Let Hypothesis~\ref{hyp:M} hold.  Then the loss
  function~\eqref{eq:KH-fos-loss} is globally variationally correct on
  the entire trial space: with $c_*$ and $\CdB$ as in
  Proposition~\ref{prop:kirchhoff-nfosls},
  \begin{equation}
    \label{eq:KH-nfosls-vc}
    \frac{c_*^2}{2}\, \| \clu - \clz \|^2_{\Ua}
    \;\le\; \Ja \clz
    \;\le\; \CdB^2\, \| \clu - \clz \|^2_{\Ua},
    \qquad \clz \in \Ua = \Hdiv \times \Ho^1 .
  \end{equation}
\end{theorem}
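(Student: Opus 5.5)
The plan is to argue exactly as in the proof of Theorem~\ref{thm:vc-NFOSLS-example}, by specializing the abstract Theorem~\ref{thm:NDPG-gen-nonlinear-dpg} to the least-squares instance of \S\ref{ssec:nfosls-abstract}. All the analytical work has already been done in Proposition~\ref{prop:kirchhoff-nfosls}. That proposition verifies Assumption~\ref{asm:NDPGdiff} for the operator $\Aa$ of~\eqref{eq:KH-fos-A} with $D = \Ua$ and $\delta = +\infty$, and it supplies the constants $\CdB$ and $c_*$ in terms of $c_K$, $C_K$ of~\eqref{eq:KH-cK}, which are independent of $\alpha$.

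The steps are as follows.

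First, Assumption~\ref{asm:fortin-a} holds trivially with $\Vah = \Va = L_2^d \times L_2$ and $\fp = \mathrm{id}$. This gives $\|\fp\|_{L(\Va)} = 1$ and $\osc(\ell) = 0$. Moreover, $\Va$ is identified with its own dual through the $L_2$ inner product, so the loss $\Ja\clz = \|\ell - \Aa(\clz)\|_{\Vap}^2$ coincides with~\eqref{eq:KH-fos-loss}. The only point to check here is the sign convention: $\ell - \Aa(q,u) = \bigl(M_\alpha(G_\alpha(u))\kappa_\alpha\nabla u - q,\; f + \dive q\bigr)$, whose squared $L_2$ norm is exactly~\eqref{eq:KH-fos-loss}.

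Second, since $D = \Ua$ and $\delta = +\infty$, Theorem~\ref{thm:NDPG-gen-nonlinear-dpg} applies with $h_0 = +\infty$ and $\tilde D_{h_0} = \Ua$. The trial class $\Uah$ is arbitrary, so we may take it to be all of $\Ua$.

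Third, inserting $\|\fp\|_{L(\Va)} = 1$ and $\osc(\ell) = 0$ into~\eqref{eq:reliability}--\eqref{eq:efficiency}, with $c = c_*^2/2$ and $C = \CdB^2$, yields~\eqref{eq:KH-nfosls-vc} for every $\clz \in \Ua$. This is precisely~\eqref{eq:ls-vc}.

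There is no genuine obstacle, since the substance lies entirely in the global nonlinear inf-sup bound of Proposition~\ref{prop:kirchhoff-nfosls}(c). The one step worth stating explicitly is that the upper bound~\eqref{eq:33-prime} in Step~1 of the abstract proof holds on all of $\Ua$. It is obtained from the FTC along segments, which lie in $D = \Ua$ because $\Ua$ is convex, and it requires the continuity of $t \mapsto d\Aa(\clw + t\clz)(\clz)(\clv)$. That continuity follows from the Fr\'echet differentiability established in Proposition~\ref{prop:kirchhoff-nfosls}(b), together with the continuity of $M_\alpha'$ from (H1) and (H3).
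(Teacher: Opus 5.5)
Your proposal is correct and matches the paper's proof: Proposition~\ref{prop:kirchhoff-nfosls} verifies Assumption~\ref{asm:NDPGdiff} with $D=\Ua$ and $\delta=+\infty$, the choice $\Vah=\Va$, $\fp=\mathrm{id}$ gives Assumption~\ref{asm:fortin-a} with $\|\fp\|_{L(\Va)}=1$ and $\osc(\ell)=0$, and Theorem~\ref{thm:NDPG-gen-nonlinear-dpg} with $h_0=+\infty$, $\tilde D_{h_0}=\Ua$ then yields~\eqref{eq:KH-nfosls-vc}. Your sign check of $\ell-\Aa(q,u)$ against~\eqref{eq:KH-fos-loss} and your remark on continuity of the derivative along segments are harmless details that the paper leaves implicit.
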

\begin{proof}
  Proposition~\ref{prop:kirchhoff-nfosls} verifies
  Assumption~\ref{asm:NDPGdiff} with $D = \Ua$ and
  $\delta = +\infty$, while Assumption~\ref{asm:fortin-a} holds with
  $\Vah = \Va$ and $\fp = \mathrm{id}$, so that
  $\|\fp\|_{L(\Va)} = 1$ and $\osc(\ell) = 0$.
  Theorem~\ref{thm:NDPG-gen-nonlinear-dpg} then applies with
  $h_0 = +\infty$ and $\tilde D_{h_0} = \Ua$, as recorded
  in~\eqref{eq:ls-vc}.
\end{proof}

\subsection{The NDPG loss function}
\label{ssec:kirchhoff-setting}

We use a {\em primal} DPG formulation of~\eqref{eq:KH-classical},
posed using the broken space $H^1(\oh)$ of~\eqref{eq:13} and the
interface space $H^{-1/2}(\partial\oh)$ normed as
in~\eqref{eq:quotient-norms}.  It is derived by the standard
primal DPG technique~\cite{DemkoGopal13a,DPGacta} of multiplying
\eqref{eq:KH-classical} by test functions, integrating by parts
element by element, and declaring the interelement trace of the flux
to be an independent unknown $\hat r_n$.  The formulation seeks
$\clu \in \Ua$ satisfying
\begin{subequations}
  \label{eq:KH-DPG}
  \begin{equation}
    \label{eq:KH-DPG-eq}
    a_\alpha(\clu; \clv) = \ell(\clv),
    \qquad \text{ for all } \clv \in \Va,
  \end{equation}
  where, now,
  \begin{align}
    & \Ua
      = \Ho^1 \times H^{-1/2}(\partial\oh),
    \\
    & \Va
      = H^1(\oh),
    \\
    & a_\alpha( (w, \hat r_n); \clv)
      = M_\alpha( G_\alpha(w))\, (\kappa_\alpha \nabla w, \nabla \clv)_{\oh}
      - \ip{ \hat r_n, \clv}_{\partial\oh},
      \label{eq:KH-BF}
    \\
    & \ell( \clv) = (f, \clv)_{L_2},
  \end{align}
  with the norms
  $\| (w, \hat r_n)\|^2_{\Ua} = \|\nabla w\|^2_{L_2}
  + \|\hat r_n\|^2_{H^{-1/2}(\partial\oh)}$ on the trial space and the broken
  $H^1(\oh)$ norm on the test space.  By~\eqref{eq:aadaa}, the above
  $a_\alpha$ defines the nonlinear operator $\Aa$ that
  casts~\eqref{eq:KH-DPG-eq} into the form~\eqref{eq:Au=ell},
  \begin{equation}
    \label{eq:KH-op}
    \Aa(\clu) = \ell, \quad\text{with}\quad \clu = (u, \hat q_n),
  \end{equation}
\end{subequations}
and we take $D = \Ua$.  For the discrete spaces, we choose, for any
polynomial degree $k \ge 0$,
\begin{equation}
  \label{eq:KH-discrete}
  \Uah = U_{k+1} \times \hat\RT_k,
  \qquad
  \Vah = P_{k+d},
\end{equation}
where $U_{k+1} = P_{k+1} \cap \Ho^1$, and $\hat\RT_k = \trn(\RT_k)$ is
the space of normal traces on element boundaries of the degree-$k$
Raviart--Thomas space $\RT_k$ of~\eqref{eq:fos-discrete}, so that
$k = 0$ recovers the spaces $U_1$ and $\hat\RT$
of~\eqref{eq:uw-discrete}.
The approximation
property~\eqref{eq:approx'} holds for this $\Uah$ by standard finite
element approximation theory.  All ingredients defining the NDPG loss
function $\Ja\cdot$ of~\eqref{eq:DPG-loss-defn} are now again in
place.

Some remarks on the computation of this loss
function are in order.  As described in
\eqref{eq:residual-rep'}--\eqref{eq:L-and-E}, evaluating
$\Ja{\clw_h}$ at a $\clw_h = (w_h, \hat r_{n,h}) \in \Uah$ amounts to
solving, element by element, small linear symmetric positive definite
systems for the residual representation $\Ea a \clwh \in \Vah$.  The
nonlinearity enters these local computations only through the single
number $m_h := M_\alpha(G_\alpha(w_h))$.  Since $\nabla w_h$ is a
polynomial of degree $k$ on each element for $w_h \in U_{k+1}$,
whenever $\kappa_\alpha$ is piecewise constant on $\oh$ the global
quantity
\[
  G_\alpha(w_h) = \sum_{K \in \oh}
  \kappa_\alpha|_K \int_K \nabla w_h \cdot \nabla w_h
\]
is computed exactly by one pass over the mesh, using a quadrature rule
exact for polynomials of degree $2k$, after which a single
evaluation of $M_\alpha$ yields $m_h$.
The derivatives of $\Ja{\clw_h}$ required by gradient-based training
are equally simple: by the chain rule, they involve, beyond linear
terms, only the additional scalar $M_\alpha'(G_\alpha(w_h))$.

\begin{proposition}[Verification of Assumption~\ref{asm:NDPGdiff}]
  \label{prop:kirchhoff-ndpg}
  Let Hypothesis~\ref{hyp:M} hold.  Then, in the setting
  of~\eqref{eq:KH-DPG}, all conditions of
  Assumption~\ref{asm:NDPGdiff} hold with $D = \Ua$ and
  $\delta = +\infty$.  Specifically:
  \begin{enumerate}[label={\rm(\alph*)}]
  \item for each $\alpha \in \cQ$, the element
    $\clu(\alpha) := (u(\alpha), \hat q_n) \in \Ua$, with $u(\alpha)$
    as in Lemma~\ref{lem:kirchhoff-wellposed} and
    $\hat q_n := \trn(\kappa_\alpha\nabla w_\alpha)$, satisfies $\Ra{\clu(\alpha)} = 0$;
    it is in fact the unique element of $\Ua$ with this property;
  \item $\Aa$ is \Frechet\ differentiable at every $\clw \in \Ua$
    and~\eqref{linvc} holds with
    $\CdB = \bigl[ C_K^2 + 1 \bigr]^{1/2}$;
  \item inequality~\eqref{eq:c-prime} holds for all $\clz \in \Ua$,
    i.e., with $\delta = +\infty$, with $c_* = \cdB$ given by
    \begin{equation}
      \label{eq:KH-cd}
      \frac{1}{\cdB^2}
      = \frac{1 + c_P^{-1}}{c_K^2}
      + \Bigl( 1 + \frac{(1 + c_P^{-1})^{1/2}\, C_K}{c_K} \Bigr)^2 ;
    \end{equation}
    in particular $c_*$ is independent of $\alpha \in \cQ$ and of the
    mesh.
  \end{enumerate}
\end{proposition}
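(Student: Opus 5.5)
Part (a) follows by assembling earlier results. By Lemma~\ref{lem:kirchhoff-wellposed}, $u(\alpha)\in\Ho^1$ solves \eqref{eq:KH-weak}, and its flux is $q=\kappa_\alpha\nabla w_\alpha$. The equation for $w_\alpha$ gives $-\dive q=f\in L_2$, so $q\in\Hdiv$ and $\hat q_n=\trn(q)\in H^{-1/2}(\partial\oh)$ is well defined. Elementwise integration by parts, $(q,\nabla v)_{\oh}-\ip{q\cdot n,v}_{\partial\oh}=(f,v)_\om$ for $v\in H^1(\oh)$, then yields $a_\alpha(\clu;v)=\ell(v)$ for all $v\in\Va$. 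Uniqueness of this element will be deduced at the end from the global estimate in (c).

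For (b), I reuse the Fréchet differentiability of $w\mapsto M_\alpha(G_\alpha(w))\kappa_\alpha\nabla w$ from $\Ho^1$ into $L_2^d$, with derivative $B_w$ of \eqref{eq:KH-frozen-form}, established in the proof of Proposition~\ref{prop:kirchhoff-nfosls}(b). Since $r\mapsto(r,\nabla v)_{\oh}$ is bounded and linear from $L_2^d$ into $\Vap$, and the trace term is linear and bounded, $\Aa$ is Fréchet differentiable with
\[
d\Aa(w,\hat r_n)(\zeta,\hat\varsigma_n)(v)=(B_w\zeta,\nabla v)_{\oh}-\ip{\hat\varsigma_n,v}_{\partial\oh}.
\]
Continuity along segments follows from the continuity of $M_\alpha,M_\alpha'$ and of $G_\alpha$. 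The bound \eqref{linvc} comes from Lemma~\ref{lem:kirchhoff-frozen} and the quotient-norm duality $|\ip{\hat\varsigma_n,v}_{\partial\oh}|\le\|\hat\varsigma_n\|\,\|v\|_{H^1(\oh)}$. Applying Cauchy--Schwarz in $\bb R^2$ then gives $\CdB^2=C_K^2+1$.

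For (c), fix $\clz^0,\clz^1$ and write $e=(e_z,\hat e_n)$ for their difference. Using the FTC along $z^t=z^0-te_z$ exactly as in Proposition~\ref{prop:kirchhoff-nfosls}(c), I will show that $\Aa(\clz^0)-\Aa(\clz^1)=b(e,\cdot)$, where $b((\zeta,\hat\varsigma_n),v)=(\bar B\zeta,\nabla v)_{\oh}-\ip{\hat\varsigma_n,v}_{\partial\oh}$ and $\bar B=\int_0^1B_{z^t}\,dt$. It then suffices to prove the inf-sup bound \eqref{eq:uw-infsup} for such linear primal broken forms, uniformly in $\bar B$. For this I invoke the broken-space theorem \cite[Theorem~3.3]{CarstDemkoGopal16} as in Proposition~\ref{prop:NDPGdiff-verif'}. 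Its input is the unbroken form $b^0(\zeta,v)=(\bar B\zeta,\nabla v)_\om$ on $\Ho^1\times\Ho^1$. Taking $v=\zeta$ and applying Lemma~\ref{lem:kirchhoff-frozen} gives the inf-sup constant $\ge c_K(1+c_P^{-1})^{-1/2}$, because $\|\zeta\|_{H^1(\oh)}\le(1+c_P^{-1})^{1/2}\|\nabla\zeta\|$. It also gives $\|b^0\|\le C_K$, and the interface stability constant is $\hat c=1$ by the definition of the quotient norm. Inserting these into $\cdB^{-2}=c_{\rm ub}^{-2}+\hat c^{-2}(\|b^0\|/c_{\rm ub}+1)^2$ reproduces \eqref{eq:KH-cd}.

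The main obstacle is the nonlocality of $\bar B$. The cited broken-space theorem must be applicable to a form whose operator is not a pointwise coefficient, so I will check that its proof uses only the unbroken inf-sup and continuity constants together with the interface duality. I expect it does, since $b^0$ is tested against the unbroken space $\Ho^1$. Finally, because the lower bound holds for all pairs, setting $\clz^0=\clu(\alpha)$ gives \eqref{eq:c-prime} with $\delta=+\infty$, together with the uniqueness claimed in (a). All constants depend only on $M_1,M_2,C_M^*,C_M^-,\kappa_1,\kappa_2,c_P$, so they are independent of $\alpha$ and of the mesh.
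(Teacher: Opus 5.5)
Your proposal is correct and follows essentially the same route as the paper. It obtains Fr\'echet differentiability from the flux map $w\mapsto M_\alpha(G_\alpha(w))\kappa_\alpha\nabla w$ via the chain rule, and gets $\CdB^2=C_K^2+1$ from Lemma~\ref{lem:kirchhoff-frozen} and quotient-norm duality. It then feeds the segment-averaged linearization $\bar B$ into \cite[Theorem~3.3]{CarstDemkoGopal16} with $c_{\rm ub}\ge c_K(1+c_P^{-1})^{-1/2}$, $\|\bar b\|\le C_K$, $\hat c=1$. The remaining differences are cosmetic: you deduce uniqueness in (a) from the global estimate in (c), which the paper also notes, instead of using its direct conforming-test argument. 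The nonlocality of $\bar B$ that you flag is harmless, because the cited broken-space theorem is purely abstract and needs only these constants; the continuity bound $C_K$ also holds for the extension to the broken test space $H^1(\oh)$.
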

\begin{proof}
  Since $G$ is a continuous quadratic form on $\Ho^1$, it is
  infinitely differentiable with
  $DG_\alpha(w)(\zeta) = 2 (\kappa_\alpha\nabla w, \nabla \zeta)_\om$.  By {\rm(H1)} and
  the chain and product rules, $\Aa$ is \Frechet\ differentiable at
  every $\clw = (w, \hat r_n) \in \Ua$ with, for
  $\clz = (\zeta, \hat\varsigma_n) \in \Ua$ and $\clv \in \Va$,
  \begin{equation}
    \label{eq:KH-dA}
    \begin{aligned}
      d\Aa(\clw)(\clz)(\clv)
      & = M_\alpha(G_\alpha(w))\, (\kappa_\alpha \nabla \zeta, \nabla \clv)_{\oh}
      - \ip{ \hat\varsigma_n, \clv}_{\partial\oh}
      \\
      & \quad
      + 2 M_\alpha'(G_\alpha(w))\, (\kappa_\alpha \nabla w, \nabla \zeta)_\om\,
      (\kappa_\alpha \nabla w, \nabla \clv)_{\oh} .
    \end{aligned}
  \end{equation}

  {\em Proof of {\rm(a)}.}  Since $-\dive (\kappa_\alpha\nabla w_\alpha) = f \in
  L_2$ holds distributionally, the flux $\kappa_\alpha\nabla w_\alpha$ is in
  $\Hdiv$, so $\hat q_n = \trn(\kappa_\alpha\nabla w_\alpha)$ is well defined.  Using
  $M_\alpha(G_\alpha(u(\alpha)))\, \kappa_\alpha\nabla u(\alpha)
  = \kappa_\alpha\nabla w_\alpha$ from
  Lemma~\ref{lem:kirchhoff-wellposed} and elementwise integration by
  parts,
  $
    a_\alpha( \clu(\alpha); \clv)
    = (\kappa_\alpha\nabla w_\alpha, \nabla \clv)_{\oh}
    - \ip{ \trn(\kappa_\alpha\nabla w_\alpha), \clv}_{\partial \oh}
    = -(\dive (\kappa_\alpha\nabla w_\alpha), \clv)_\om
    = (f, \clv)_\om
    = \ell(\clv)
  $
  for all $\clv \in \Va$, so $\Ra{\clu(\alpha)} = 0$.  For
  uniqueness, suppose $\clw = (\tilde u, \tilde q_n) \in \Ua$ also
  satisfies $\Aa(\clw) = \ell$.  Testing with conforming
  $\clv \in \Ho^1 \subset \Va$ annihilates the interface term,
  so $\tilde u$ solves~\eqref{eq:KH-weak}, and
  Lemma~\ref{lem:kirchhoff-wellposed} gives $\tilde u = u(\alpha)$.
  Then $\ip{\tilde q_n - \hat q_n, \clv}_{\partial\oh} = 0$ for all
  $\clv \in \Va$, which forces $\tilde q_n = \hat q_n$ by the duality
  underlying the quotient norm in~\eqref{eq:quotient-norms} (see
  also~\cite{CarstDemkoGopal16}).

  {\em Proof of {\rm(b)}.}  By~\eqref{eq:KH-CS} and
  {\rm(H1)}--{\rm(H2)}, exactly as in the first bound
  of~\eqref{eq:KH-coercive},
  \[
    | d\Aa(\clw)(\clz)(\clv) |
    \le C_K\, \|\nabla\zeta\|_{L_2} \|\clv\|_{\Va}
    + \|\hat\varsigma_n\|_{H^{-1/2}(\partial\oh)} \|\clv\|_{\Va}
    \le \CdB \|\clz\|_{\Ua} \|\clv\|_{\Va}
  \]
  with $\CdB = [C_K^2 + 1]^{1/2}$,
  proving~\eqref{linvc}.

  {\em Proof of {\rm(c)}.}  Write
  $b_w(\zeta, v) := (B_w \zeta, \nabla v)_{\oh}$ for the
  bilinear form on $\Ho^1 \times H^1(\oh)$ generated by the
  linearization~\eqref{eq:KH-frozen-form}, so that, by
  Lemma~\ref{lem:kirchhoff-frozen}, $b_w$ is bounded with constant
  $C_K$ and, when restricted to conforming $v$, coercive on
  $\Ho^1$ with constant $c_K$, both uniformly in $w$
  and~$\alpha$.  Comparing with~\eqref{eq:KH-dA},
  $d\Aa(\clw)(\clz)(\clv) = b_w(\zeta,\clv) -
  \ip{\hat\varsigma_n, \clv}_{\partial\oh}$.

  Now fix $\clz^0 = (w^0, \hat r_n^0)$ and $\clz^1 = (w^1, \hat r_n^1)$ in
  $\Ua$, arbitrary, and put $e := w^0 - w^1$,
  $\hat e_n := \hat r_n^0 - \hat r_n^1$, and
  $\clz^t := \clz^1 + t(\clz^0 - \clz^1)$ for $t \in [0,1]$.  Fix
  $\clv \in \Va$.  By~(b) and the continuity of $M_\alpha$ and
  $M_\alpha'$ from {\rm(H1)}, the function
  $t \mapsto a_\alpha(\clz^t; \clv)$ is continuously differentiable on
  $[0,1]$, with derivative $d\Aa(\clz^t)(\clz^0 - \clz^1)(\clv)$.
  Hence, by the FTC and~\eqref{eq:KH-dA},
  \begin{equation}
    \label{eq:KH-averaged}
    \ip{ \Aa(\clz^0) - \Aa(\clz^1), \clv}
    = \int_0^1 d\Aa(\clz^t)(\clz^0 - \clz^1)(\clv)\, dt
    = \bar b(e, \clv) - \ip{ \hat e_n, \clv}_{\partial\oh},
  \end{equation}
  where $\bar b(\zeta, v) := \int_0^1 b_{w^t}(\zeta, v)\, dt$ is the
  average of these forms along the segment, with $w^t$ denoting the
  $\Ho^1$ component of $\clz^t$.  By the last assertion of
  Lemma~\ref{lem:kirchhoff-frozen}, $\bar b$ is bounded and coercive
  with the {\em same} two constants.

  The right-hand side of~\eqref{eq:KH-averaged} is exactly the broken
  primal DPG form generated by $\bar b$ and the interface variable
  $\hat e_n$.  Therefore the theory of broken variational
  formulations~\cite[Theorem~3.3]{CarstDemkoGopal16} converts the
  Lax--Milgram well-posedness of $\bar b$ into the inf-sup estimate
  \begin{equation}
    \label{eq:inf-sup-kirchhoff}
    \sup_{0 \ne \clv \in \Va}
    \frac{ | \ip{ \Aa(\clz^0) - \Aa(\clz^1), \clv} |}{ \|\clv\|_{\Va}}
    \ge \cdB\, \| \clz^0 - \clz^1 \|_{\Ua},
  \end{equation}
  with $\cdB$ as in~\eqref{eq:KH-cd}, and in particular independent of
  $\clz^0, \clz^1$ and of $\alpha \in \cQ$.  That reference gives
  $\cdB^{-2} = c_{\rm ub}^{-2} + \hat c^{-2} ( \|\bar b\|/c_{\rm ub} + 1)^2$, where
  $c_{\rm ub}$ and $\|\bar b\|$ are the inf-sup and continuity constants of
  $\bar b$ on the {\em unbroken} pair $\Ho^1 \times \Ho^1$ and
  $\hat c$ is the stability constant of the interface term.  Here
  $\hat c = 1$, since the quotient norm~\eqref{eq:quotient-norms} is
  the dual norm of that term, while Lemma~\ref{lem:kirchhoff-frozen}
  and the Friedrichs inequality~\eqref{eq:Friedrichs-std}, which gives
  $\|\zeta\|_{H^1(\om)} \le (1 + c_P^{-1})^{1/2}\|\nabla\zeta\|_{L_2}$,
  yield $\|\bar b\| \le C_K$ and
  $c_{\rm ub} \ge c_K (1 + c_P^{-1})^{-1/2}$, whence~\eqref{eq:KH-cd}.  Taking $\clz^0 = \clu(\alpha)$
  gives~\eqref{eq:c-prime} for every $\clz = \clz^1 \in \Ua$, that is,
  {\rm(c)} with $c_* = \cdB$ and $\delta = +\infty$.  (Since the
  estimate holds for all pairs, it also re-derives the uniqueness
  asserted in {\rm(a)}.)
\end{proof}

\begin{remark}[The alternate {\rm(b$'$)}--{\rm(c$'$)} route]
  \label{rem:kirchhoff-bc-route}
  This is the only application in this paper where both sufficient
  conditions of Remark~\ref{rem:sufficient-c} hold, and it is worth
  recording what they give.  Condition {\rm(b$'$)} is immediate, since
  \eqref{eq:KH-dA} is the broken primal DPG form generated by the
  linearization~\eqref{eq:KH-frozen-form}, so
  that~\cite[Theorem~3.3]{CarstDemkoGopal16} and the
  bounds~\eqref{eq:KH-coercive} supply the same $\cdB$ as
  in~\eqref{eq:inf-sup-kirchhoff}; condition {\rm(c$'$)} holds
  because $M_\alpha$ sees $w$ only through the
  scalar $G_\alpha(w)$ and $G_\alpha$ is norm-continuous (no Nemytskii operator intervenes between $w$ and the
  coefficient as in
  \S\ref{sec:nonlinear-diffusion}, cf.\ Remark~\ref{rem:no-frechet}). One can show that
  $d\Aa$ is therefore Lipschitz on bounded subsets of $\Ua$, with
  $\alpha$-independent constants.  Applying
  Remark~\ref{rem:sufficient-c} at $\clu(\alpha)$, whose norm is
  bounded independently of $\alpha$ by
  Lemma~\ref{lem:kirchhoff-wellposed}, and choosing the neighborhood
  radius $\delta$ so that the linearization error absorbs half the
  inf-sup constant, yields~\eqref{eq:c-prime} with $c_* = \cdB/2$, but
  only for $\clz$ within $\delta$ of $\clu(\alpha)$.  This is weaker
  than Proposition~\ref{prop:kirchhoff-ndpg}{\rm(c)} and would only deliver local variational correctness.
\end{remark}

\begin{proposition}[Verification of Assumption~\ref{asm:fortin-a}]
  \label{prop:fortin-kirchhoff}
  In the setting of~\eqref{eq:KH-DPG} and~\eqref{eq:KH-discrete},
  suppose $\kappa_\alpha$ is piecewise constant on $\oh$.  Then
  Assumption~\ref{asm:fortin-a} holds with $D_h = \Uah$ and the
  Fortin operator $\fp := \Pi^{\rm grad}_{k+d}$ for $k\ge1$, from
  \cite[Lemma~3.2]{GopalQiu14}, and $\fp := \tilde\Pi_d$ for $k=0$,
  from~\eqref{eq:fortin-mean-correction}.  Neither the polynomial degree $k$
  nor the nonlinearity $M_\alpha$ is restricted beyond
  Hypothesis~\ref{hyp:M}.
\end{proposition}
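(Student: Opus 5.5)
The plan is to verify the Fortin identity~\eqref{eq:Fortin} term by term for the two parts of the form~\eqref{eq:KH-BF}. The key observation is that the nonlinearity enters $a_\alpha$ only through the scalar $m_h := M_\alpha(G_\alpha(w_h))$, which is a spatial constant. Fix $\clw_h = (w_h, \hat r_{n,h}) \in \Uah$ and $\clv \in \Va = H^1(\oh)$. Then
\[
  a_\alpha(\clw_h; \clv - \fp\clv)
  = m_h\,(\kappa_\alpha \nabla w_h, \nabla(\clv - \fp\clv))_{\oh}
  - \ip{\hat r_{n,h}, \clv - \fp\clv}_{\partial\oh}.
\]
Since $m_h$ is a constant, it suffices to show that each of the two terms vanishes separately. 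The argument is then the same as for the linear primal DPG method, which is why neither $M_\alpha$ nor $k$ needs further restriction. It also explains why $D_h = \Uah$: no smallness or closeness to $\clu$ is required.

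\emph{Volume term.} On each $K$, the field $\kappa_\alpha|_K \nabla w_h|_K$ lies in $P_k(K)^d$, because $\kappa_\alpha$ is constant on $K$. Integrating by parts on $K$ gives
\[
  (\kappa_\alpha\nabla w_h, \nabla(\clv - \fp\clv))_K
  = -(\dive(\kappa_\alpha \nabla w_h), \clv - \fp\clv)_K
  + \ip{\kappa_\alpha \nabla w_h\cdot n, \clv - \fp\clv}_{\partial K}.
\]
The first term on the right involves a polynomial of degree $k-1$ on $K$, and the second a polynomial of degree $k$ on each face $F$.

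For $k \ge 1$, I would invoke the defining moment conditions of $\Pi^{\rm grad}_{k+d}$ from~\cite[Lemma~3.2]{GopalQiu14}. These are $(\clv - \Pi^{\rm grad}_{k+d}\clv, q)_K = 0$ for $q \in P_{k-1}(K)$ and $(\clv - \Pi^{\rm grad}_{k+d}\clv, \mu)_F = 0$ for $\mu \in P_k(F)$ on every face. Together they kill both pieces.

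For $k = 0$, $\nabla w_h$ is constant on $K$, so the volume divergence term is absent. The face term is $\kappa_\alpha\nabla w_h\cdot n$, which is constant on each face, and so vanishes by the face-average identity in~\eqref{eq:fortin-preservation}. Indeed, for $k = 0$ the element-average preservation is not even needed here, but it will give $\osc(\ell)=0$ when $f \in P_0$.

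\emph{Interface term.} The function $\hat r_{n,h} \in \hat\RT_k$ restricts on each face of $K$ to a polynomial of degree $k$, namely the normal trace of an $\RT_k$ field. So $\ip{\hat r_{n,h}, \clv - \fp\clv}_{\partial K}$ vanishes by the same face-moment orthogonality: for $k\ge1$ this is from~\cite[Lemma~3.2]{GopalQiu14}, and for $k=0$ it is~\eqref{eq:fortin-preservation}. Summing over $K \in \oh$ yields~\eqref{eq:Fortin}.

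Finally, I would record continuity and uniform boundedness of $\fp$ on $H^1(\oh)$. This is from~\cite[Lemma~3.2]{GopalQiu14} for $k\ge1$, and from the bound established right after~\eqref{eq:fortin-preservation} for $k=0$. It is needed so that $\|\fp\|_{L(\Va)}$ is mesh-independent.

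The only delicate point I foresee is bookkeeping the exact degrees in~\cite[Lemma~3.2]{GopalQiu14}. I need to check that the target space $P_{k+d}$ provides volume moments up to degree $k-1$ and face moments up to degree $k$; the choice $\Vah = P_{k+d}$ was evidently made for this. I also need the fact that $\kappa_\alpha$ piecewise constant keeps $\kappa_\alpha\nabla w_h$ polynomial. Without that hypothesis, the volume term would produce an oscillation-like remainder rather than vanish.
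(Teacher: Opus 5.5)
Your proposal is correct and follows the paper's own proof. You split $a_\alpha(\clw_h;\clv-\fp\clv)$ into the term scaled by the spatial constant $M_\alpha(G_\alpha(w_h))$ and the interface term, integrate the former by parts elementwise using $\kappa_\alpha\nabla w_h\in P_k(K)^d$, and annihilate every piece with the volume moments up to degree $k-1$ and the face moments up to degree $k$, supplied by \cite[Lemma~3.2]{GopalQiu14} for $k\ge1$ and by~\eqref{eq:fortin-preservation} for $k=0$. Your remark that the element-average preservation of $\tilde\Pi_d$ is not needed for the Fortin identity itself, but only for $\osc(\ell)=0$, is consistent with how the paper uses it in Theorem~\ref{thm:vc-NDPG-Kirchhoff}.
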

\begin{proof}
  Let $\clwh = (w_h, \hat r_{n,h}) \in \Uah$ and $\clv \in \Va$.  The
  Fortin condition~\eqref{eq:Fortin} reads
  \[
    M_\alpha( G_\alpha(w_h))\,
    (\kappa_\alpha \nabla w_h, \nabla( \clv - \fp \clv))_{\oh}
    - \ip{ \hat r_{n,h}, \clv - \fp \clv}_{\partial\oh}
    = 0.
  \]
  By~\cite[Lemma~3.2]{GopalQiu14} for $k\ge1$ and
  \eqref{eq:fortin-preservation} for $k=0$, this uniformly bounded
  operator satisfies, on every $K \in \oh$,
  \begin{equation}
    \label{eq:KH-fortin-orth}
    (\clv - \fp \clv, y_h)_K = 0, \;\;
    \qquad
    \ip{ \clv - \fp \clv, \mu_h}_{\partial K} = 0
  \end{equation}
  for all $y_h\in   P_{k-1}(K)$ and $\mu_h \in P_k(\partial K).$
  Since $\hat r_{n,h}|_{\partial K}$ is in $P_k(\partial K)$, the
  second term of the Fortin condition vanishes by the second
  orthogonality.  For the first, $w_h \in U_{k+1}$ and the piecewise
  constancy of $\kappa_\alpha$ put $\kappa_\alpha \nabla w_h$ in
  $P_k^d(K)$, so elementwise integration by parts gives
  \[
    (\kappa_\alpha \nabla w_h, \nabla( \clv - \fp \clv))_K
    = - (\dive(\kappa_\alpha \nabla w_h), \clv - \fp \clv)_K
    + \ip{ \kappa_\alpha \nabla w_h \cdot n, \clv - \fp \clv}_{\partial K},
  \]
  where $\dive(\kappa_\alpha \nabla w_h) \in P_{k-1}(K)$ and
  $\kappa_\alpha \nabla w_h \cdot n \in P_k(\partial K)$, so both
  terms vanish by~\eqref{eq:KH-fortin-orth}.  Hence the two terms of
  the Fortin condition vanish separately---the first for any value of
  the scalar $M_\alpha(G_\alpha(w_h))$---so~\eqref{eq:Fortin} holds
  for every $k \ge 0$, with no restriction on the nonlinearity
  $M_\alpha$.
\end{proof}

\begin{theorem}[Variational correctness of the Kirchhoff NDPG loss]
  \label{thm:vc-NDPG-Kirchhoff}
  Let Hypothesis~\ref{hyp:M} hold, adopt the
  setting~\eqref{eq:KH-DPG} and~\eqref{eq:KH-discrete}, and let
  $\kappa_\alpha$ be piecewise constant on $\oh$.  Then the
  NDPG loss function $\Ja\cdot$ of~\eqref{eq:DPG-loss-defn} is
  {\em globally} variationally correct, with the $\alpha$- and
  mesh-independent constants $c = \cdB^2/2$ and $C = \CdB^2 = C_K^2 + 1$,
  where $\cdB$ is given by~\eqref{eq:KH-cd} in terms of $c_K$, $C_K$
  and $c_P$: the
  estimates~\eqref{eq:reliability}--\eqref{eq:efficiency} hold for
  every mesh and every $\clwh \in \Uah$.  If, moreover,
  $f \in P_{\max(k-1,\,0)}$, then $\osc(\ell) = 0$, so that
  \begin{equation}
    \label{eq:KH-vc}
    \frac{c}{\|\fp\|^2_{L(\Va)}}\,
    \| \clu - \clwh \|^2_{\Ua}
    \le \Ja{\clwh}
    \le C\, \| \clu - \clwh \|^2_{\Ua},
    \qquad \clwh \in \Uah.
  \end{equation}
\end{theorem}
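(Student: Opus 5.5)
The plan is to combine Propositions~\ref{prop:kirchhoff-ndpg} and~\ref{prop:fortin-kirchhoff} with the abstract Theorem~\ref{thm:NDPG-gen-nonlinear-dpg}, exactly as in the proof of Theorem~\ref{thm:vc-NDPG-example}.

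\emph{Step 1.} Proposition~\ref{prop:kirchhoff-ndpg} verifies Assumption~\ref{asm:NDPGdiff} with $D = \Ua$ and $\delta=+\infty$. It supplies $\CdB^2 = C_K^2+1$ and $c_* = \cdB$ from~\eqref{eq:KH-cd}, and both constants are independent of $\alpha$ and of the mesh.

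\emph{Step 2.} Since $\kappa_\alpha$ is piecewise constant, Proposition~\ref{prop:fortin-kirchhoff} verifies Assumption~\ref{asm:fortin-a} with $D_h = \Uah$. Theorem~\ref{thm:NDPG-gen-nonlinear-dpg} then applies in its global form, with $h_0=+\infty$ and $\tilde D_{h_0}=\Ua$. This gives~\eqref{eq:reliability}--\eqref{eq:efficiency} for every mesh and every $\clwh\in\Uah$, with $c=c_*^2/2=\cdB^2/2$ and $C=\CdB^2$.

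\emph{Step 3.} This step shows $\osc(\ell)=0$ when $f\in P_{\max(k-1,0)}$. The oscillation is $\sup_{\clv}|(f,\clv-\fp\clv)_\om|/\|\clv\|_{\Va}$, so it suffices that $(f,\clv-\fp\clv)_K=0$ on each element.
\begin{itemize}
\item For $k\ge1$, $f|_K\in P_{k-1}(K)$, so this is the volume orthogonality in~\eqref{eq:KH-fortin-orth}.
\item For $k=0$, $f|_K$ is constant, and $\tilde\Pi_d$ preserves element averages by~\eqref{eq:fortin-preservation}.
\end{itemize}

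\emph{Step 4.} With $\osc(\ell)=0$, dividing~\eqref{eq:reliability} by $\|\fp\|^2_{L(\Va)}$ gives the left inequality of~\eqref{eq:KH-vc}; the right inequality is~\eqref{eq:efficiency}. The norm $\|\fp\|_{L(\Va)}$ is bounded on shape-regular meshes by the cited Fortin results, so~\eqref{eq:KH-vc} yields variational correctness with uniform constants.

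There is no real obstacle, since all ingredients are already established. The only points needing care are the $k=0$ case in Step 3, which relies on the mean-correction property rather than~\cite{GopalQiu14}, and the uniform boundedness of $\fp$.
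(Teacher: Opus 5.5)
Your proposal is correct and matches the paper's own proof. Like the paper, you invoke Propositions~\ref{prop:kirchhoff-ndpg} and~\ref{prop:fortin-kirchhoff}, apply Theorem~\ref{thm:NDPG-gen-nonlinear-dpg} in its global form ($h_0=+\infty$, $\tilde D_{h_0}=\Ua$), and get $\osc(\ell)=0$ from the volume orthogonality~\eqref{eq:KH-fortin-orth} for $k\ge1$ and from the element-average preservation~\eqref{eq:fortin-preservation} of $\tilde\Pi_d$ for $k=0$.
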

\begin{proof}
  Propositions~\ref{prop:kirchhoff-ndpg}
  and~\ref{prop:fortin-kirchhoff} verify
  Assumptions~\ref{asm:NDPGdiff} and~\ref{asm:fortin-a},
  respectively.  Since $D = \Ua$ and $\delta = +\infty$ in
  Proposition~\ref{prop:kirchhoff-ndpg},
  Theorem~\ref{thm:NDPG-gen-nonlinear-dpg} holds globally, with
  $h_0 = +\infty$ and $\tilde D_{h_0} = \Ua$, and
  gives~\eqref{eq:reliability}--\eqref{eq:efficiency} with
  $c = c_*^2/2 = \cdB^2/2$ and $C = \CdB^2$; these constants are
  independent of $\alpha$ and of the mesh because $\cdB$ and $\CdB$
  are.  When $f \in P_{k-1}$, the first orthogonality
  in~\eqref{eq:KH-fortin-orth} gives
  $\ell(\clv - \fp \clv) = (f, \clv - \fp \clv)_\om = 0$ for all
  $\clv \in \Va$; the same holds when $f \in P_0$ and $k = 0$, since
  $\tilde\Pi_d$ preserves element averages
  by~\eqref{eq:fortin-preservation}.  Either way
  $\osc(\ell) = 0$,
  and~\eqref{eq:reliability} implies the lower inequality
  in~\eqref{eq:KH-vc}.
\end{proof}

\subsection{Discussion}

With both applications (of Sections~\ref{sec:nonlinear-diffusion} and~\ref{sec:kirchhoff-ndpg}) now in hand, their similarities and  contrasts are worth
noting.  In each, condition~\eqref{eq:c-prime} was verified
directly, by averaging the derivative along a segment, and in each
this yielded $\delta = +\infty$, hence global variational
correctness.  What separates them is the cost of the discrete
hypothesis, Assumption~\ref{asm:fortin-a}, in the DPG cases:
the Kirchhoff application is
the more favorable of the two on this count.
In the quasilinear
application, the NDPG result of Theorem~\ref{thm:vc-NDPG-example}
needed the parameter to be piecewise constant on $\oh$ and the trial
degree to be the lowest one, so that the nonlinearity evaluated at a
trial function stays piecewise polynomial.
In the Kirchhoff case,
its primal NDPG loss is
covered, by Theorem~\ref{thm:vc-NDPG-Kirchhoff}, under the same mild
condition on the coefficient $\kappa_\alpha$ as in the
quasilinear case, but with no condition on the nonlinearity and
at every polynomial degree: the Fortin property requires no
adaptation to $M_\alpha$ because $M_\alpha$ enters the form only
through a scalar multiplier.  Its NFOSLS loss, by
Theorem~\ref{thm:vc-NFOSLS-Kirchhoff}, applies even to elements of meshless hypothesis classes, and
is the cheapest loss in the paper to evaluate.
It is also the only place in this paper where the
sufficient conditions of Remark~\ref{rem:sufficient-c} are available.

\section{Application to neural approximations}
\label{sec:nn}
\definecolor{depthA}{RGB}{244,109,67}
\definecolor{depthB}{RGB}{215,48,39}
\definecolor{depthC}{RGB}{153,15,21}
\definecolor{depthD}{RGB}{75,0,10}
\definecolor{depthAb}{RGB}{116,169,207}
\definecolor{depthBb}{RGB}{43,140,190}
\definecolor{depthCb}{RGB}{4,90,141}
\definecolor{depthDb}{RGB}{2,40,80}
\definecolor{epA}{RGB}{166,196,222}
\definecolor{epB}{RGB}{107,153,208}
\definecolor{epC}{RGB}{49,110,180}
\definecolor{epD}{RGB}{12,70,140}
\definecolor{epE}{RGB}{5,30,80}
\definecolor{degA}{RGB}{31,119,180}
\definecolor{degB}{RGB}{214,96,20}
\definecolor{degC}{RGB}{35,110,45}

\pgfplotsset{
  s6ablation/.style={
    width=8.7cm, height=5.9cm,
    xmajorgrids=true, ymajorgrids=true,
    grid style={gray!22, very thin},
    tick label style={font=\scriptsize},
    label style={font=\footnotesize},
    title style={font=\footnotesize\bfseries, yshift=-2pt},
    legend cell align=left,
    legend style={font=\scriptsize, draw=none, fill=white,
                  fill opacity=0.93, text opacity=1, inner sep=1.5pt,
                  row sep=-1pt},
  },
  s6best/.style={thick, mark size=1.5pt},
}
\providecommand{\cfiveSlopeMeshKirchhoffR}{1.98}
\providecommand{\cfiveSlopeMeshKirchhoffRfem}{1.99}
\providecommand{\cfiveSlopeMeshNdpgR}{2.06}
\providecommand{\cfiveSlopeMeshNdpgRfem}{2.08}
\providecommand{\cfiveSlopeTrainsetKirchhoffE}{-0.23}
\providecommand{\cfiveSlopeTrainsetKirchhoffR}{-0.73}
\providecommand{\cfiveSlopeTrainsetNdpgE}{-0.52}
\providecommand{\cfiveSlopeTrainsetNdpgR}{-1.30}
We now train neural network surrogates of the parameter-to-solution map
with the NDPG losses, for the quasilinear flux model
of~\S\ref{ssec:nonlinear-diffus-flux-model} with the invertible profile of
Example~\ref{ex:invertible}, and for the stationary Kirchhoff model
of~\S\ref{sec:kirchhoff-ndpg}.  Experiments with analogs of NFOSLS losses
and similar networks are reported in~\cite{BDO,CDG,QCD}, so we focus on
NDPG.  Network predictions are compared with the finite element solution
in the space they predict into, with a high-fidelity solution in a richer
space, and through the ratio of loss to squared error, which variational
correctness bounds on both sides.

\subsection{The learning problem}
\label{ssec:learning-problem}
A loss that can be evaluated at an arbitrary input, and whose value is
equivalent to the error that input commits, can be used to approximate
$\alpha\mapsto\clu(\alpha)$ over a whole family of parameters at once.
Here ``learning'' means {\em regression} over a {\em hypothesis class}
$\mathcal H=\mathcal H(\Theta)$ of functions of $x\in\om$ and
$\alpha\in\cQ$, each determined by a set $\theta\in\Theta$ of trainable
weights and taking values in $\Ua$ at every $\alpha$.  Endowing $\cQ$ with
a probability measure $\mu$, the target is the minimizer of the population
risk
\begin{equation}
\label{minexp}
\clu =\argmin_{\clz \in D}\mathbb E_{\alpha\sim\mu}\big[\Ra\clz\big]= \argmin_{\clz \in D}\| \ell - \Aa(\clz)\|^2_{L_{2,\mu}(\cQ;(\Vap)_{\alpha\in\cQ})},
\end{equation}
\newcommand{\Y}{\mathbb{Y}}
\newcommand{\X}{\mathbb{X}}
where $\X:= L_{2,\mu}(\cQ;(\Ua)_{\alpha\in\cQ})$ and
$\Y'=L_{2,\mu}(\cQ;(\Vap)_{\alpha\in\cQ})$ are the direct integrals of the
(assumed measurable) families $(\Ua)_{\alpha\in\cQ}$ and
$(\Vap)_{\alpha\in\cQ}$, see~\cite{BDO}: $\X$ consists of the
$\mu$-equivalence classes of measurable $\clz$ with $\clz(\alpha)\in\Ua$
for $\mu$-a.e.\ $\alpha$ and
\begin{equation}
  \label{eq:Xnorm}
  \|\clz\|_{\X}^2
  :=\int_{\cQ}\|\clz(\alpha)\|_{\Ua}^2\,d\mu(\alpha)
  =\mathbb E_{\alpha\sim\mu}\big[\|\clz(\alpha)\|_{\Ua}^2\big]<\infty ,
\end{equation}
and likewise for $\Y'$.  Since the integrand depends on $\clz$ only
through $\clz(\alpha)$, \eqref{minexp} reduces to
\eqref{eq:exact-minimization} for $\mu$-a.e.\ $\alpha$.  We assume that
the solution operator
\begin{equation}
    \label{solop}
\mathcal F:   \alpha \mapsto \clu(\cdot;\alpha)
\end{equation}
maps into $\X$, and measure the accuracy of an approximation
$\alpha\mapsto\clu(\alpha;\theta^*)$ by $\|\clu-\clu(\theta^*)\|_{\X}$, its
root mean squared error over $\alpha\sim\mu$.

This error is not computable; the loss is minimized in its place.  For
both models below the loss is globally variationally correct, by
Theorems~\ref{thm:vc-NDPG-example} and~\ref{thm:vc-NDPG-Kirchhoff}, with
constants $c'$, $C'$ independent of $\alpha$ and of the mesh, and
$\osc(\ell)=0$ since $f\equiv1$.  Integrating in $\mu$ gives
\begin{equation}
  \label{vc}
    c'\, \| \clu - \clz \|_\X^2
    \;\le\;
    \bb E_{\alpha \sim \mu} \bigl[ \Jaa{\alpha}{\clz(\alpha)}\bigr]
    \;\le\;
    C'\, \| \clu - \clz \|_\X^2,
    \qquad \clz\in\X_h,
\end{equation}
where $\X_h:=\{\clz\in\X:\ \clz(\alpha)\in\Uah$ for $\mu$-a.e.\ $\alpha\}$.
Since $D=\Ua$ and $\delta=+\infty$ in Assumption~\ref{asm:NDPGdiff}(c) for
both models, the restriction to $\tilde D_{h_0}$ in
Theorem~\ref{thm:NDPG-gen-nonlinear-dpg} is inactive, and \eqref{vc}
applies to the output of a trained network, however far it is from $\clu$.
(For NFOSLS, Theorem~\ref{thm:vc-NFOSLS-example} gives the same estimate
for every $\clz\in\X$.)

Training minimizes the empirical counterpart of the expectation
in~\eqref{vc},
\begin{equation}
    \label{emprisk}
    \alpha\mapsto \clu(\alpha;\theta^*)\approx \clu(\alpha),\qquad \theta^*\in \argmin_{\theta\in\Theta}\frac{1}{\#\widehat\cQ}\sum_{\alpha\in\widehat\cQ}
    \Jaa{\alpha}{ \clu(\alpha;\theta)},
\end{equation}
over a finite set $\widehat\cQ\subset\cQ$ of $\mu$-i.i.d.\ draws.  Each
summand is computable from $\alpha$ and the network output alone, so no
solution of~\eqref{eq:Au=ell} is needed to evaluate~\eqref{emprisk} or its
gradient; the purely data-driven approach of operator learning instead
fits precomputed high-fidelity solutions, at the cost of a nonlinear solve
per training sample.  Estimate~\eqref{vc} bounds the error by the
population loss but does not control the gap between training and
population losses.  For a fixed trained network with finite loss
variance, an independent test sample of size $n_{\rm test}$ estimates its
population loss with root mean squared sampling error of order
$n_{\rm test}^{-1/2}$.

The parameter domain is finite-dimensional throughout: $\om$ is
partitioned into subdomains $\om_i$, $i=1,\dots,m_\alpha$, with indicator
functions $\chi_i$, and
\begin{equation}
  \label{eq:Qdefn}
  \mathcal{Q} = \left \{\sum_{i=1}^{m_\alpha} \alpha^{(i)} \chi_i(x) \; : \; \alpha^{(i)} > 0 \right \}.
\end{equation}
In every experiment the $\om_i$ are the cells of a uniform $n\times n$
tiling of $\om=(0,1)^2$,
\begin{equation}
  \label{eq:tiling}
  \om_{(i_1,i_2)}
  =\left(\tfrac{i_1-1}{n},\tfrac{i_1}{n}\right)\times
   \left(\tfrac{i_2-1}{n},\tfrac{i_2}{n}\right),
  \qquad (i_1,i_2)\in\{1,\dots,n\}^2 ,
\end{equation}
enumerated once and for all, so that $m_\alpha=n^2$, with $n=2,\dots,5$.
Identifying $\alpha$ with its vector of values
$(\alpha^{(1)},\dots,\alpha^{(m_\alpha)})$ makes $\cQ$ an open subset of
$\mathbb R^{m_\alpha}$.  An $\alpha$ taking distinct values on adjacent
cells jumps across their interface, so~\eqref{eq:Au=ell} is a nonlinear
interface problem.  Function-valued parameters could be treated by
adapting the network input, but since our concern is variational
correctness we stay with this finite-dimensional case.  Every mesh $\oh$
used below resolves the tiling, so every $\alpha\in\cQ$ is piecewise
constant on $\oh$, as Theorems~\ref{thm:vc-NDPG-example}
and~\ref{thm:vc-NDPG-Kirchhoff} require.  The range of the
$\alpha^{(i)}$ and the measure $\mu$ are specified
in~\eqref{eq:alphadists}.  Figure~\ref{fig:exp2-alpha} shows a sample
$\alpha^*$ at $n=5$ and, as an example of what follows, the trained DPG network's prediction at that single sample
$\alpha^*$ for the invertible
nonlinearity of~\S\ref{sssec:num-models}, next to the nonlinear DPG
solution in the same trial space: at a single parameter the two are
visually indistinguishable.

\begin{figure}[htbp]
  \centering
  \begingroup
  \setlength{\tabcolsep}{1pt}
  \newcommand{\ndpgpanel}[1]{\includegraphics[width=.2\textwidth]{figures/ndpg_showcase/panels/#1.pdf}}\newcommand{\ndpgbar}[1]{\includegraphics[height=.2\textwidth]{figures/ndpg_showcase/panels/colorbar_#1.pdf}}\newcommand{\ndpgrowlabel}[1]{\raisebox{.5\dimexpr.2\textwidth-\height+\depth\relax}{\shortstack{\footnotesize #1}}}\begin{tabular}{@{}cc@{}}
    \multicolumn{2}{c}{\scriptsize\textbf{$\alpha^*$}}\\[-1pt]
    \includegraphics[width=.3\textwidth]{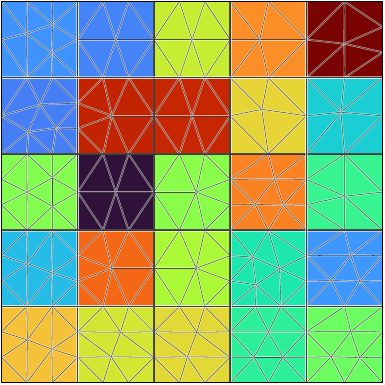}
    & \includegraphics[height=.3\textwidth]{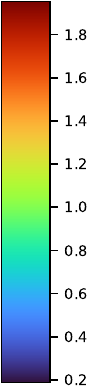}
  \end{tabular}\hspace{.03\textwidth}\begin{tabular}{@{}rccc@{}}
    & \scriptsize\textbf{NDPG-NN} & \scriptsize\textbf{NDPG-FEM} & \\[-1pt]
    \ndpgrowlabel{$\hat u$}
    & \ndpgpanel{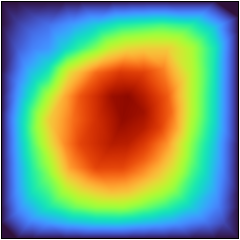} & \ndpgpanel{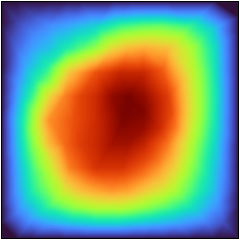} & \ndpgbar{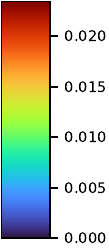} \\[1pt]
    \ndpgrowlabel{$|\hat q_n|$}
    & \ndpgpanel{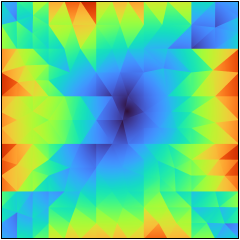} & \ndpgpanel{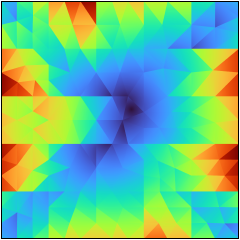}
    & \ndpgbar{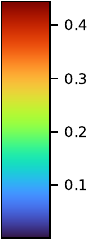}
  \end{tabular}
  \endgroup
  \caption{{\em Left.} An instance of the tiling~\eqref{eq:tiling} at
    $n=5$ (so $m_\alpha=25$) showing a coefficient $\alpha^*$ with
    contrast $\varkappa=20$, together with the aligned mesh of $298$
    triangles at $h=0.08$.  {\em Right.} For the invertible
    nonlinearity at that $\alpha^*$, the traces $\hat u$ (up) and
    $|\hat q_n|$ (down) are shown, through their minimal $H^1$ and
    $H(\dive)$ lifts ($k=0$), for two cases: for the prediction of a
    network trained with the NDPG loss (NDPG-NN) on $n_{\rm train}=4096$ samples for $20\,000$ epochs,
    and for the nonlinear DPG solution, i.e., the minimizer of the
    same loss over the same trial space $\Uah$ (NDPG-FEM). They are visually indistinguishable.}
  \label{fig:exp2-alpha}
  \label{fig:exp2-solutions}
\end{figure}

The hypothesis class combines the finite element trial space $\Uah$ with
a neural network producing coefficients in a fixed basis of it.  Let
$\Uah$ be~\eqref{eq:uw-discrete} for the ultraweak model
and~\eqref{eq:KH-discrete} for the Kirchhoff model, on a shape-regular
mesh $\oh$ of maximal element diameter~$h$.  In both instances $\Uah$ is
independent of $\alpha$, together with its norm, so
one basis $\psi_k$, $k=1,\dots,m_h$, serves all $\alpha\in\cQ$.
Identifying elements of $\Uah$ and $\cQ$ with their coefficient vectors
in $\bb R^{m_h}$ and $\bb R^{m_\alpha}$, an approximation $\cl F_h$ of the
solution operator~\eqref{solop} with values in $\Uah$ is determined by a
map $\mathbb{F}: \mathbb{R}^{m_\alpha} \to \mathbb{R}^{m_h}$ through
\begin{equation}
  \label{eq:fun-map}
  \cl F_h\Big( \sum_{i=1}^{m_\alpha} \alpha^{(i)} \chi_i(x)\Big)
  = \sum_{k=1}^{m_h} \,[\mathbb F(\alpha)]_k\, \psi_k(x),
  \qquad x \in \om.
\end{equation}
For $\mathbb{F}$ we use the low-rank residual architecture
of~\cite{CDG,BDO}.  Let $\varrho(y)_i =\max( y_i, 10^{-3} y_i)$ be the
leaky ReLU acting componentwise on $\bb R^N$, $N$ being the width, let $r$
be the rank (often $r\ll N$~\cite{BDO}) and $l$ the number of layers, and
let $A_{\text{out}} \in \bb R^{m_h \times N}$, $A_m \in \bb R^{N \times r}$,
$W_m \in \bb R^{r \times N}$, $A_{\text{in}} \in \bb{R}^{N \times m_\alpha}$,
$b_m \in \bb R^r$, $b_{\text{out}} \in \bb{R}^{m_h}$,
$b_{\text{in}} \in \bb{R}^{N}$.  With
\begin{gather*}
  \Phi_m (z) = z + A_m \varrho\big( W_m z + b_m \big),\quad m=1,\dots,l,
  \\
  L_{\text{out}}(z) = A_{\text{out}} z + b_{\text{out}},
  \qquad
  L_{\text{in}}(p) = A_{\text{in}} p + b_{\text{in}},
\end{gather*}
for $z \in \bb R^N$ and $p \in \bb R^{m_\alpha}$, the approximations of
$\mathbb F$ are
\begin{equation}
    \label{eq:NN-F}
  \mathbb{F}_\theta : \mathbb{R}^{m_\alpha} \to  \mathbb{R}^{m_h}, \qquad
  \mathbb{F}_\theta = L_{\text{out}} \circ \Phi_l \circ \dots \circ \Phi_1 \circ L_{\text{in}}
\end{equation}
where $\theta = \{A_{\text{in}}, b_{\text{in}}, A_m, W_m, b_m,
b_{\text{out}}, A_{\text{out}} : m = 1, \dots, l\}$ ranges over a set
$\Theta$.  Replacing $\bb F$ by $\bb F_\theta$ in~\eqref{eq:fun-map}
defines $\Fht : \cQ \to \Uah$, and
$\mathcal H(\Theta)=\{\Fht:\ \theta\in\Theta\}$.

The number of trainable weights in~\eqref{eq:NN-F} splits as
\begin{equation}
  \label{eq:ndpg-theta-split}
  |\theta| = \theta_{\rm fixed} + m_h\,(N+1),
  \qquad
  \theta_{\rm fixed} := N\,(m_\alpha+1) + l\,(2Nr+r),
\end{equation}
where $m_h(N+1)$ counts the weights of $L_{\rm out}$, one row of
$A_{\rm out}$ and one entry of $b_{\rm out}$ per degree of freedom of
$\Uah$, while $\theta_{\rm fixed}$ counts those of $L_{\rm in}$ and
$\Phi_1,\dots,\Phi_l$ and does not depend on the trial space.  In all
experiments $N=128$ and $r=32$, and from~\S\ref{ssec:ndpg-expressivity}
on also $l=5$, so $\theta_{\rm fixed}$ depends on $m_\alpha$ alone.
We keep  $N$, $r$ and $l$, and hence $\theta_{\rm fixed}$, fixed even when
$\Uah$ is enriched,  only to cap the computational cost.
The number of weights per degree of freedom,
$\theta_{\rm fixed}/m_h$, then shrinks as $h$ decreases, and
\S\ref{ssec:ndpg-ablation} checks whether this limits the attainable loss.

\subsection{General settings}
\label{ssec:num-protocol}

The theory asserts that the losses are reliable and efficient error
estimators.  The experiments quantify three things it leaves open: the
balance between spatial discretization and network size in the hybrid
hypothesis class; the dependence of the constants in~\eqref{vc} on the
parameter range and on the dimension of the parameter domain; and the
effect of minimizing over finite training sets.
Subsection~\ref{ssec:ndpg-expressivity} fixes the network size, while
\S\S\ref{ssec:num-ratio}--\ref{ssec:ndpg-tiling} measure the loss--error
relation, and \S\ref{ssec:ndpg-ablation} varies the training set and the
trial space.  Each table moves one quantity and holds the others at a
common configuration.  To this end only, we compute high-fidelity
reference solutions, which would not be available in practice; moderate
accuracy suffices.
\label{sssec:num-models}
\label{sssec:num-campaign}
The first model is the quasilinear flux law of
Example~\ref{ex:invertible},
$\sigma_\alpha(p)=-\alpha^{-1}(2+(1+|p|)^{-1})p$, with profile bounds
$\beta_1=2$, $\beta_2=3$.  Its inverse $\rho_\alpha=\sigma_\alpha^{-1}$ is
available in closed form, which makes the ultraweak form~\eqref{eq:DPG-A},
and hence the NDPG loss~\eqref{eq:DPG-loss-defn}, computable; its networks
produce the tuples $\clz_h=(q_h,u_h,\hat u_h,\hat q_{n,h})$
of~\eqref{eq:uw-discrete}.  The second is the Kirchhoff model with the
saturated Carrier law of Example~\ref{ex:mu-kirchhoff},
$M(s)=1+30s/(1+5s)$, so that $M_1=1$ and $M_2=7$; its networks produce the
pairs $\clz_h=(u_h,\hat q_{n,h})$ of~\eqref{eq:KH-discrete}.  There the
nonlinearity enters only through the scalar $m=M_\alpha(G_\alpha(u))$, so
by~\eqref{eq:KH-solution} a finite element solution costs one linear solve
and one scalar root-finding step, whereas the quasilinear model requires Newton's
method; it is the latter model that would make supervised training
expensive.  In both models $f=1$ on $\om=(0,1)^2$.  The parameter enters
as the resistivity $\alpha^{-1}$ in the flux model and as the diffusion
coefficient $\alpha$ in the Kirchhoff model, so the two are comparable
through the contrast defined below, not through individual values.

The sampling measure is the product uniform measure on
\begin{equation}
\label{eq:alphadists}
  \cQ=[\alpha_1,\alpha_2]^{m_\alpha},
  \qquad \alpha_1=0.1 \text{ fixed},\qquad
  \alpha_2\in\{0.1,\,0.5,\,1,\,2,\,3\},
\end{equation}
determined by the contrast $\varkappa=\alpha_2/\alpha_1\in\{1,5,10,20,30\}$.
At $\varkappa=1$, $\mu$ is the Dirac measure at $(0.1,\dots,0.1)$ and the
problem is not parametric; this case serves as a control.

Theorem~\ref{thm:vc-NDPG-example} requires the lowest order $k=0$, so that
the nonlinearity evaluated at a trial function is piecewise polynomial.
The Kirchhoff loss is covered for every degree by
Theorem~\ref{thm:vc-NDPG-Kirchhoff}, but we use $k=0$ there as well, so
that the two models are compared under identical conditions.  The test
norm is the product norm on $\Hdivoh\times H^1(\oh)$ in the ultraweak
model and the broken $H^1$ norm in the Kirchhoff model, neither depending
on $\alpha$.
In the Kirchhoff runs the broken test space is $P_{k+d+1}$, one degree
higher than the space $P_{k+d}$ the analysis requires, which the theory
permits.
Errors are measured in the trial norm $\|\cdot\|_{\Ua}$ of
each formulation: the $L_2$ norm for the ultraweak field variables, the
$H^1$ seminorm for the Kirchhoff field, and for the traces the norms
\[
  \|\hat v_h\|_{1/2,h}^2
  :=\|v_h\|_{L_2}^2+\|\nabla v_h\|_{L_2}^2,
  \qquad
  \|\hat r_{n,h}\|_{-1/2,h}^2
  :=\|r_h\|_{L_2}^2+\|\dive r_h\|_{L_2}^2,
\]
where $v_h$ and $r_h$ are extensions of minimal norm of $\hat v_h$ and
$\hat r_{n,h}$ within the Lagrange and Raviart--Thomas spaces of the mesh
and degree at hand.  Since the quotient norms~\eqref{eq:quotient-norms}
are infima of $\|\nabla v\|_{L_2}$ and $\|r\|_{\Hdiv}$ over all
extensions, these computed trace norms bound those of the theory from
above.  We write $\|\cdot\|_{\Ua}$ also for the resulting computed
product norm, in which all errors and ratios below are reported.

Following the ablation study for the linear counterpart in~\cite{CDG},
all networks use $N=128$ and $r=32$ in~\eqref{eq:NN-F} and are trained by
Adam with learning rate $10^{-4}$ and batch size $32$.  A further $500$
parameters form a validation set that monitors generalization but never
enters training.  Unless stated otherwise, networks are trained for
$10\,000$ epochs, so that differences between outcomes cannot be
attributed to the optimization effort; the studies
of~\S\ref{ssec:ndpg-expressivity} and~\S\ref{ssec:ndpg-ablation}, where
that effort is itself examined, run to $25\,000$ epochs, and the network
of Figure~\ref{fig:exp2-alpha} to $20\,000$.  All quantities are evaluated
on one test set
$\widehat\cQ_{\mathrm{test}}=\{\alpha_{[1]},\dots,\alpha_{[1024]}\}$ drawn
independently of the training and validation samples (the bracketed
subscript enumerates draws, the superscript in~\eqref{eq:Qdefn}
subdomains).
All experiments in this section use one fixed random seed for the training
samples and the network initialization, and one fixed random seed for
$\widehat\cQ_{\rm test}$; hence, for a given tiling and contrast, both models
are evaluated on the same parameters.
\label{sssec:num-heldout}
\label{sssec:num-reference}
\label{sssec:num-ratio}

\subsection{Network complexity}
\label{ssec:ndpg-expressivity}

Since it conditions every later experiment, we first fix the size of the
network.  Larger networks generally need more epochs to realize their
potential, so we vary the depth $l$ at fixed $N$ and $r$ and record the
cumulative minimum of the test loss over the epochs.  We do this in the
most demanding configuration considered: the $5\times5$ tiling with
contrast $\varkappa=30$, mesh size $h=0.1$ and $n_{\rm train}=4096$.
Precisely, we record
\begin{equation}
  \label{eq:cmin}
  \mathrm{cmin}(e) := \min_{e'\le e}\,
  \Big( \frac{1}{\#\widehat\cQ_{\rm test}}
  \sum_{\alpha\in\widehat\cQ_{\rm test}}
  \Jaa{\alpha}{\clz^{(e')}(\alpha)} \Big)^{1/2},
\end{equation}
where $\clz^{(e)}$ is the network at the checkpoint saved after $e$
epochs.  By construction $\mathrm{cmin}$ is
nonincreasing and becomes flat exactly where no later checkpoint improves on
the best value reached so far.
Figure~\ref{fig:ndpg-depth} shows that deeper networks reach a given level
in fewer epochs: the curve for $l=1$ remains constant over a large part of
the epoch axis and requires more training to reach new minima.
The depth $l=5$ is held fixed in all subsequent experiments.

\begin{figure}[htbp]
  \centering
  \begin{tikzpicture}
    \begin{groupplot}[s6ablation,
        log ticks with fixed point,
        ytick={0.01,0.02,0.03,0.05,0.1,0.2,0.3,0.5},
        group style={group size=2 by 1, horizontal sep=1.5cm,
                     ylabels at=edge left},
        xlabel={Epochs}, ylabel={$\widehat\cR^{1/2}$},
        xmode=normal, ymode=log,
        scaled x ticks=false, xtick distance=5000,
        xmin=0, xmax=11000,
        restrict x to domain=0:10000,
        x tick label style={font=\scriptsize},
        legend style={at={(0.97,0.97)}, anchor=north east},
        legend columns=1]
      \nextgroupplot[title={Invertible nonlinearity}]
      \addplot[s6best, depthA, mark=*] table[x=epoch, y=l1best] {data/ndpg_depth_ndpg.dat};
      \addplot[s6best, depthB, mark=square*] table[x=epoch, y=l3best] {data/ndpg_depth_ndpg.dat};
      \addplot[s6best, depthC, mark=triangle*] table[x=epoch, y=l5best] {data/ndpg_depth_ndpg.dat};
      \addplot[s6best, depthD, mark=diamond*] table[x=epoch, y=l7best] {data/ndpg_depth_ndpg.dat};
      \legend{$l=1$,$l=3$,$l=5$,$l=7$}
      \nextgroupplot[title={Kirchhoff model}]
      \addplot[s6best, depthA, mark=*] table[x=epoch, y=l1best] {data/ndpg_depth_kirchhoff.dat};
      \addplot[s6best, depthB, mark=square*] table[x=epoch, y=l3best] {data/ndpg_depth_kirchhoff.dat};
      \addplot[s6best, depthC, mark=triangle*] table[x=epoch, y=l5best] {data/ndpg_depth_kirchhoff.dat};
      \addplot[s6best, depthD, mark=diamond*] table[x=epoch, y=l7best] {data/ndpg_depth_kirchhoff.dat};
      \legend{$l=1$,$l=3$,$l=5$,$l=7$}
    \end{groupplot}
  \end{tikzpicture}
  \caption{The cumulative minimum~\eqref{eq:cmin} versus the number of training epochs, for
  depths $l\in\{1,3,5,7\}$ at the $5\times5$ tiling, contrast
  $\varkappa=30$, and first $10\,000$ epochs.  The widths $N=128$ and $r=32$, the training set
  $n_{\rm train}=4096$ and the mesh $h=0.1$ are held fixed.}
  \label{fig:ndpg-depth}
\end{figure}

\subsection{Residual-error ratios}
\label{ssec:num-heldout}
\label{ssec:num-reference}
\label{ssec:num-ratio}

\begin{table}[htbp]
  \centering
  \footnotesize
  \setlength{\tabcolsep}{4pt}
  \renewcommand{\arraystretch}{0.95}
  \caption{Varying contrast for the $2\times2$ tiling; upper block: the invertible nonlinearity;
  lower block: the Kirchhoff model; mesh size $h=0.1$, degree $k=0$, number of epochs $10\,000$, $n_{\rm train}=4096$, $\#\widehat\cQ_{\rm test}=1024$, proportionality confidence parameter $K_{\rm est}$ defined in \eqref{eq:num-Kest}.}
  \label{tab:num-contrast}
  \begin{tabular}{l|ccccc}
    \hline
    & \multicolumn{5}{c}{$\varkappa$} \\
    \cline{2-6}
    & $1$ & $5$ & $10$ & $20$ & $30$ \\
    \hline
    \multicolumn{6}{@{}l}{\itshape Invertible nonlinearity}\\
    \noalign{\smallskip}
    $\bar{\mathsf r}_{\rm est}$ & $0.113$ & $0.127$ & $0.115$ & $0.166$ & $0.204$ \\
    $\widehat\cR^{\,\rm NN}$ & $2.41\times10^{-4}$ & $3.12\times10^{-4}$ & $3.64\times10^{-4}$ & $4.41\times10^{-4}$ & $5.44\times10^{-4}$ \\
    $\widehat\cR^{\,\rm FEM}$ & $2.41\times10^{-4}$ & $2.51\times10^{-4}$ & $2.79\times10^{-4}$ & $3.72\times10^{-4}$ & $4.85\times10^{-4}$ \\
    $K_{\rm est}$ & $53.0$ & $41.5$ & $28.3$ & $20.7$ & $16.7$ \\
    \hline
    \multicolumn{6}{@{}l}{\itshape Kirchhoff model}\\
    \noalign{\smallskip}
    $\bar{\mathsf r}_{\rm est}$ & $0.085$ & $0.302$ & $0.480$ & $0.647$ & $0.811$ \\
    $\widehat\cR^{\,\rm NN}$ & $1.33\times10^{-3}$ & $1.53\times10^{-3}$ & $1.62\times10^{-3}$ & $1.84\times10^{-3}$ & $1.84\times10^{-3}$ \\
    $\widehat\cR^{\,\rm FEM}$ & $1.33\times10^{-3}$ & $1.45\times10^{-3}$ & $1.53\times10^{-3}$ & $1.58\times10^{-3}$ & $1.60\times10^{-3}$ \\
    $K_{\rm est}$ & $43.9$ & $32.2$ & $24.6$ & $19.7$ & $17.9$ \\
    \hline
  \end{tabular}
\end{table}
For each test parameter $\alpha$ let $\clz_h^{\mathrm{NN}}(\alpha)=\Fht(\alpha)$
and $\clz_h^{\mathrm{FEM}}(\alpha)$ denote the network prediction and the
finite element solution in the same trial space $\Uah$.  The latter is
computed by Newton's method in the quasilinear model, converging for every
$\alpha$, and by scalar root finding in the Kirchhoff model.  We compare
the mean losses of the two states over the test set,
\begin{equation}
  \label{eq:num-heldout-residuals}
  \widehat\cR^{\,s}
  :=\frac1{\#\widehat\cQ_{\mathrm{test}}}
       \sum_{\alpha\in\widehat\cQ_{\mathrm{test}}}
       \Jaa{\alpha}{\clz_h^{s}(\alpha)},
  \qquad s\in\{\mathrm{NN},\mathrm{FEM}\},
\end{equation}
the empirical counterparts of
$\bb E_{\alpha \sim \mu} [ \Jaa{\alpha}{\clz(\alpha)}]$.  Since
$\clz_h^{\mathrm{FEM}}(\alpha)$ minimizes the loss over $\Uah$, we have
$\widehat\cR^{\,\rm NN}\ge\widehat\cR^{\,\rm FEM}$, and the gap measures
what the training leaves unrealized.

We wish to quantify the proportionality constants between losses and
errors, in particular their dependence on the parameter range.  Summing
the per-$\alpha$ estimates of Theorems~\ref{thm:vc-NDPG-example}
and~\ref{thm:vc-NDPG-Kirchhoff} over $\widehat\cQ_{\rm test}$ gives
$c'\le\bar{\mathsf r}\le C'$ for the ratio
$\bar{\mathsf r}:=\widehat\cR^{\,\rm NN}/\widehat{\cl E}(\clu-\clz_h^{\rm NN})$,
where
\begin{equation}
    \label{Ehat}
  \widehat{\cl E}(\clw):= \frac{1}{\#\widehat\cQ_{\rm test}}
  \sum_{\alpha\in\widehat\cQ_{\rm test}} \|\clw(\alpha)\|^2_\Ua .
\end{equation}
The constants $c'$, $C'$ depend on the endpoints $\alpha_1$, $\alpha_2$
of~\eqref{eq:alphadists}.  For the flux~\eqref{eq:5},
Corollary~\ref{cor:flux-instance} gives
$\lambda=C_\gamma=C_\gamma^-=0$, $\mu_1=c_0=\beta_1/\alpha_2$ and
$\mu_2=\beta_2/\alpha_1$.  With $\beta_1$, $\beta_2$, $\alpha_1$ fixed,
only $\mu_1$ varies, decreasing as $\varkappa$ grows, so $\CdB$
in~\eqref{eq:uw-Cd} grows like $\mu_1^{-1}$ and $\cdB$
in~\eqref{eq:uw-cd} shrinks.  For the Kirchhoff model, $\kappa_1=\alpha_1$
and $\kappa_2=\alpha_2$ in~\eqref{eq:KH-cK}, so $c_K$ is fixed while
$C_K$ grows linearly with $\alpha_2$.  In both models the range
guaranteed for $\bar{\mathsf r}$ therefore widens with the contrast.

Since $\clu(\alpha)$ is not available, we replace it by the solution
$\clu_{h_*}= \clu_{h_*}(\alpha)$ of degree $k=1$ on one uniform refinement of the mesh,
$h_*=0.05$, and report
\begin{equation}
  \label{eq:ndpg-ratio}
  \bar{\mathsf r}_{\rm est}
  :=\frac{\widehat\cR^{\,\rm NN}}
           {\widehat{\cl E}(\clu_{h_*}-\clz_h^{\rm NN})}.
\end{equation}
To assess this substitution, let $\clu_{\rm ref}$ be the solution of
degree $k=1$ on two uniform refinements of the mesh, let
$\widehat\cQ_{\rm sub}\subset\widehat\cQ_{\rm test}$ consist of the first
twelve test parameters, and set
\begin{equation}
  \label{eq:num-Kest}
  K_{\rm est}:=\min_{\alpha\in\widehat\cQ_{\rm sub}}
  \frac{\|\clu_{h_*}(\alpha)-\clz_h^{\rm NN}(\alpha)\|_{\Ua}}
       {\|\clu_{h_*}(\alpha)-\clu_{\rm ref}(\alpha)\|_{\Ua}}.
\end{equation}
(The meshes are nested and the degrees raised, so transfers between them
are exact to machine precision.)
If saturation holds, i.e.,
$\|\clu-\clu_{\rm ref}\|_{\Ua}\le\rho\,\|\clu-\clu_{h_*}\|_{\Ua}$ for some
$\rho<1$, the triangle inequality gives
$\|\clu-\clu_{h_*}\|_{\Ua}\le C_{\rm sat}\,\|\clu_{h_*}-\clu_{\rm ref}\|_{\Ua}$
with $C_{\rm sat}:=(1-\rho)^{-1}$, hence
\begin{equation}
  \label{eq:K-est-perturbation}
  \bigg|\frac{\|\clu(\alpha)-\clz_h^{\rm NN}(\alpha)\|_{\Ua}}
             {\|\clu_{h_*}(\alpha)-\clz_h^{\rm NN}(\alpha)\|_{\Ua}}-1\bigg|
  \le\delta:=\frac{C_{\rm sat}}{K_{\rm est}},
  \qquad \alpha\in\widehat\cQ_{\rm sub}.
\end{equation}
Were this true on all of $\widehat\cQ_{\rm test}$ with $\delta<1$, summing
squares would place $\bar{\mathsf r}$ between
$(1+\delta)^{-2}\,\bar{\mathsf r}_{\rm est}$ and
$(1-\delta)^{-2}\,\bar{\mathsf r}_{\rm est}$.  The larger $K_{\rm est}$,
the smaller the effect of replacing $\clu$ by $\clu_{h_*}$.  Since
$C_{\rm sat}$ is unknown and the check covers $\widehat\cQ_{\rm sub}$
only, this is a plausibility check, not a bound.

Table~\ref{tab:num-contrast} varies the contrast at the $2\times2$ tiling.
In the control column $\varkappa=1$, where $\alpha\equiv\alpha_1$, the two
losses agree in both models.  For $\varkappa>1$, $\widehat\cR^{\,\rm FEM}$
is consistently lower, but $\widehat\cR^{\,\rm NN}$ exceeds it by at most
$30\%$: measured through the loss, the training realizes most of the
accuracy the trial space affords.  Since $K_{\rm est}\ge 16$ throughout,
though decreasing as the contrast grows, $\bar{\mathsf r}_{\rm est}$ is a
reliable proxy for $\bar{\mathsf r}$.  It grows with the contrast, by less
than a factor of two in the flux model and monotonically, by a factor of
about ten, in the Kirchhoff model.  This is consistent with the widening
of the range permitted by Theorems~\ref{thm:vc-NDPG-example}
and~\ref{thm:vc-NDPG-Kirchhoff} and, in the Kirchhoff model, with the
growth of $C_K$ with $\alpha_2$.

\subsection{Dimension of the parameter domain}
\label{ssec:ndpg-tiling}

\begin{table}[htbp]
  \centering
  \footnotesize
  \setlength{\tabcolsep}{4pt}
  \renewcommand{\arraystretch}{0.95}
  \caption{Varying the tiling at $\varkappa=5$; upper block: invertible nonlinearity; lower block: Kirchhoff model;
  mesh size $h=0.1$, degree $k=0$, number of epochs $10\,000$, $n_{\rm train}=4096$, $\#\widehat\cQ_{\rm test}=1024$, $K_{\rm est}$ over the twelve parameters of $\widehat\cQ_{\rm sub}$.}
  \label{tab:num-tiling}
  \begin{tabular}{l|cccc}
    \hline
    & \multicolumn{4}{c}{tiling} \\
    \cline{2-5}
    & $2\times2$ & $3\times3$ & $4\times4$ & $5\times5$ \\
    \hline
    \multicolumn{5}{@{}l}{\itshape Invertible nonlinearity}\\
    \noalign{\smallskip}
    $\bar{\mathsf r}_{\rm est}$ & $0.127$ & $0.122$ & $0.104$ & $0.108$ \\
    $\widehat\cR^{\,\rm NN}$ & $3.12\times10^{-4}$ & $3.77\times10^{-4}$ & $3.57\times10^{-4}$ & $3.96\times10^{-4}$ \\
    $\widehat\cR^{\,\rm FEM}$ & $2.51\times10^{-4}$ & $2.78\times10^{-4}$ & $2.78\times10^{-4}$ & $2.88\times10^{-4}$ \\
    $K_{\rm est}$ & $41.5$ & $15.4$ & $15.5$ & $12.6$ \\
    \hline
    \multicolumn{5}{@{}l}{\itshape Kirchhoff model}\\
    \noalign{\smallskip}
    $\bar{\mathsf r}_{\rm est}$ & $0.302$ & $0.290$ & $0.299$ & $0.317$ \\
    $\widehat\cR^{\,\rm NN}$ & $1.53\times10^{-3}$ & $2.01\times10^{-3}$ & $2.02\times10^{-3}$ & $2.29\times10^{-3}$ \\
    $\widehat\cR^{\,\rm FEM}$ & $1.45\times10^{-3}$ & $1.72\times10^{-3}$ & $1.65\times10^{-3}$ & $1.98\times10^{-3}$ \\
    $K_{\rm est}$ & $32.2$ & $24.3$ & $11.1$ & $10.8$ \\
    \hline
  \end{tabular}
\end{table}

Table~\ref{tab:num-tiling} fixes $\varkappa=5$ and refines the tiling from
$2\times2$ to $5\times5$, i.e., $m_\alpha$ from $4$ to $25$, keeping
$h=0.1$ and $k=0$ (the mesh being adapted to resolve each tiling).
The ratio $\bar{\mathsf r}_{\rm est}$ stays within $[0.104, 0.127]$ in the
flux model and $[0.290, 0.317]$ in the Kirchhoff model, while
$K_{\rm est}\ge 10$: the loss--error relation is insensitive to the
dimension of the parameter domain, in agreement with the theorems, whose
constants depend on the parameter range but not on $m_\alpha$.  The
network loss $\widehat\cR^{\,\rm NN}$ grows mildly with $m_\alpha$, by
$27\%$ in the flux model and $50\%$ in the Kirchhoff model from the
coarsest to the finest tiling, and $\widehat\cR^{\,\rm FEM}$ grows less.
A finer tiling gives the coefficient more interfaces across which it
jumps, so the solutions are less regular and the trial space at fixed
$h$ and $k$ approximates them less well.  With the training set, the network and the
epochs held fixed, the more complex parameter dependence also widens the
gap between $\widehat\cR^{\,\rm NN}$ and $\widehat\cR^{\,\rm FEM}$, but
only moderately: their ratio stays below $1.4$, so the spatial and parametric
resolution are not yet out of balance.

\subsection{Balancing the training set and the trial space}
\label{ssec:ndpg-ablation}

We return to the $2\times2$ tiling and $\varkappa=5$, the configuration
common to Tables~\ref{tab:num-contrast} and~\ref{tab:num-tiling}, with
$k=0$, $l=5$ and $25\,000$ epochs.
By Theorems~\ref{thm:vc-NDPG-example}
and~\ref{thm:vc-NDPG-Kirchhoff} the constants in~\eqref{vc} depend on
neither $h$ nor $n_{\rm train}$, and
\S\S\ref{ssec:num-ratio}--\ref{ssec:ndpg-tiling} have quantified them, so
the losses can be read directly as error measures.  The training is
therefore assessed by comparing $\widehat\cR^{\,\rm NN}$ with the
training-independent $\widehat\cR^{\,\rm FEM}$ of the same trial space; no
reference solution is needed.  First, $h=0.1$ is fixed and $n_{\rm train}$
varied from $128$ to $4096$ (Table~\ref{tab:num-trainset}).

\begin{table}[htbp]
  \centering
  \footnotesize
  \setlength{\tabcolsep}{4pt}
  \renewcommand{\arraystretch}{0.95}
  \caption{Varying the training set size at the $2\times2$ tiling, $\varkappa=5$, $h=0.1$, $k=0$, $l=5$, number of epochs $25\,000$, $\#\widehat\cQ_{\rm test}=1024$.  The last row gives the loss of the finite element solution in the same trial space, which does not depend on training.}
  \label{tab:num-trainset}
  \begin{tabular}{r|cc|cc}
    \hline
    & \multicolumn{2}{c|}{Invertible nonlinearity} & \multicolumn{2}{c}{Kirchhoff model} \\
    $n_{\rm train}$ & $\widehat\cR^{\,\rm NN}$ & $\widehat\cR^{\,\rm NN}/\widehat\cR^{\,\rm FEM}$ & $\widehat\cR^{\,\rm NN}$ & $\widehat\cR^{\,\rm NN}/\widehat\cR^{\,\rm FEM}$ \\
    \hline
    $128$  & $2.29\times10^{-2}$ & $91.3$ & $8.67\times10^{-3}$ & $5.97$ \\
    $256$  & $5.17\times10^{-3}$ & $20.6$ & $2.67\times10^{-2}$ & $18.4$ \\
    $512$  & $8.74\times10^{-4}$ & $3.48$ & $2.69\times10^{-3}$ & $1.85$ \\
    $1024$ & $8.10\times10^{-4}$ & $3.22$ & $2.12\times10^{-3}$ & $1.46$ \\
    $2048$ & $2.61\times10^{-4}$ & $1.04$ & $1.53\times10^{-3}$ & $1.06$ \\
    $4096$ & $2.52\times10^{-4}$ & $1.00$ & $1.47\times10^{-3}$ & $1.01$ \\
    \hline
    $\widehat\cR^{\,\rm FEM}$ & $2.51\times10^{-4}$ & & $1.45\times10^{-3}$ & \\
    \hline
  \end{tabular}
\end{table}

As $n_{\rm train}$ grows, the ratio
$\widehat\cR^{\,\rm NN}/\widehat\cR^{\,\rm FEM}$ approaches $1$ in both
models: with enough samples the network attains, within the allotted
epochs, the loss of the discrete minimizer of its trial space.  A smaller
training set leaves the optimization short of that level, and the deficit
persists under further training, as the upper row of
Figure~\ref{fig:ndpg-c5-cmin} shows.  No rate of decay in $n_{\rm train}$ is to be expected:
$\widehat\cR^{\,\rm NN}$ approaches the expected loss of the trained
network, which is positive for any finitely parametrized approximation.

\begin{table}[htbp]
  \centering
  \footnotesize
  \setlength{\tabcolsep}{3pt}
  \renewcommand{\arraystretch}{0.95}
  \caption{Prediction and finite element losses for decreasing mesh sizes, fixed
  $n_{\rm train}=4096$, $k=0$, $l=5$ and number of epochs $25\,000$, $\#\widehat\cQ_{\rm test}=1024$.  The columns show the number of weights per spatial degree of freedom~\eqref{eq:ndpg-theta-split}, with $	\theta_{\rm fixed}=41\,760$.}
  \label{tab:num-mesh}
  \begin{tabular}{r|ccc|ccc}
    \hline
    & \multicolumn{3}{c|}{Invertible nonlinearity} & \multicolumn{3}{c}{Kirchhoff model} \\
    & $\theta_{\rm fixed}/m_h$ & $\widehat\cR^{\,\rm NN}$ & $\widehat\cR^{\,\rm FEM}$ & $\theta_{\rm fixed}/m_h$ & $\widehat\cR^{\,\rm NN}$ & $\widehat\cR^{\,\rm FEM}$ \\
    \hline
    \multicolumn{7}{c}{mesh size $h$, at $n_{\rm train}=4096$} \\
    \hline
    $0.2$ & $148.6$ & $1.05\times10^{-3}$ & $1.05\times10^{-3}$ & $369.6$ & $5.45\times10^{-3}$ & $5.44\times10^{-3}$ \\
    $0.12$ & $55.6$ & $3.72\times10^{-4}$ & $3.70\times10^{-4}$ & $138.7$ & $1.99\times10^{-3}$ & $1.99\times10^{-3}$ \\
    $0.1$ & $37.9$ & $2.52\times10^{-4}$ & $2.51\times10^{-4}$ & $94.7$ & $1.47\times10^{-3}$ & $1.45\times10^{-3}$ \\
    $0.08$ & $24.1$ & $1.58\times10^{-4}$ & $1.55\times10^{-4}$ & $60.3$ & $8.65\times10^{-4}$ & $8.54\times10^{-4}$ \\
    \hline
  \end{tabular}
\end{table}

Next, $n_{\rm train}=4096$ is fixed and $h$ decreased.  Since
$\theta_{\rm fixed}$ is fixed, the number of weights per degree of
freedom, $\theta_{\rm fixed}/m_h$, shrinks (Table~\ref{tab:num-mesh}).
Both losses decay like $h^2$, with fitted exponents
$\cfiveSlopeMeshNdpgR$ for $\widehat\cR^{\,\rm NN}$ and
$\cfiveSlopeMeshNdpgRfem$ for $\widehat\cR^{\,\rm FEM}$ in the flux model,
and $\cfiveSlopeMeshKirchhoffR$ and $\cfiveSlopeMeshKirchhoffRfem$ in the
Kirchhoff model, and $\widehat\cR^{\,\rm NN}$ exceeds
$\widehat\cR^{\,\rm FEM}$ by at most about $2\%$, at the finest mesh of
the flux model.  The network thus keeps pace with the finite element
solution down to the smallest $h$, even though the number of weights per
degree of freedom drops by a factor of about six over the table; how far
this persists under further refinement, before larger $N$, $r$ or $l$
become necessary, is not tested here.  The Kirchhoff trial space~\eqref{eq:KH-discrete} has two
components where the ultraweak one~\eqref{eq:uw-discrete} has four, so its
$\theta_{\rm fixed}/m_h$ is larger on every mesh.  The lower row of
Figure~\ref{fig:ndpg-c5-cmin} shows the training effort this costs: each
curve reaches the level $(\widehat\cR^{\,\rm FEM})^{1/2}$ of its own
trial space (dashed), and the finer the mesh, the later it does so.

\begin{figure}[htbp]
  \centering
  \begin{tikzpicture}
    \begin{groupplot}[s6ablation,
        log ticks with fixed point,
        ytick={0.01,0.02,0.03,0.05,0.1,0.2,0.3,0.5},
        group style={group size=2 by 2, horizontal sep=1.5cm,
                     vertical sep=1.3cm,
                     ylabels at=edge left},
        legend style={font=\scriptsize, draw=none, fill=none},
        legend columns=1]
      \nextgroupplot[title={Invertible nonlinearity}, xlabel={Epochs}, ylabel={$\widehat\cR^{1/2}$},
        xmode=normal, ymode=log, scaled x ticks=false,
        xtick distance=8000, xmin=0, xmax=26000,
        x tick label style={font=\scriptsize},
        legend style={at={(0.97,0.97)}, anchor=north east}]
      \addplot[s6best, depthAb, mark=*] table[x=epoch, y=n256] {data/ndpg_c5_trainsetcmin_ndpg.dat};
      \addplot[s6best, depthCb, mark=triangle*] table[x=epoch, y=n1024] {data/ndpg_c5_trainsetcmin_ndpg.dat};
      \addplot[s6best, depthDb, mark=diamond*] table[x=epoch, y=n4096] {data/ndpg_c5_trainsetcmin_ndpg.dat};
      \legend{$n_{\rm train}=256$,$n_{\rm train}=1024$,$n_{\rm train}=4096$}
      \addplot[dashed, black, forget plot] table[x=epoch, y=n256fem] {data/ndpg_c5_trainsetcmin_ndpg.dat};
      \nextgroupplot[title={Kirchhoff model}, xlabel={Epochs},
        xmode=normal, ymode=log, scaled x ticks=false,
        xtick distance=8000, xmin=0, xmax=26000,
        x tick label style={font=\scriptsize},
        legend style={at={(0.97,0.97)}, anchor=north east}]
      \addplot[s6best, depthAb, mark=*] table[x=epoch, y=n256] {data/ndpg_c5_trainsetcmin_kirchhoff.dat};
      \addplot[s6best, depthCb, mark=triangle*] table[x=epoch, y=n1024] {data/ndpg_c5_trainsetcmin_kirchhoff.dat};
      \addplot[s6best, depthDb, mark=diamond*] table[x=epoch, y=n4096] {data/ndpg_c5_trainsetcmin_kirchhoff.dat};
      \legend{$n_{\rm train}=256$,$n_{\rm train}=1024$,$n_{\rm train}=4096$}
      \addplot[dashed, black, forget plot] table[x=epoch, y=n256fem] {data/ndpg_c5_trainsetcmin_kirchhoff.dat};
      \nextgroupplot[xlabel={Epochs}, ylabel={$\widehat\cR^{1/2}$},
        xmode=normal, ymode=log, scaled x ticks=false,
        xtick distance=8000, xmin=0, xmax=26000,
        x tick label style={font=\scriptsize},
        legend style={at={(0.97,0.97)}, anchor=north east}]
      \addplot[s6best, depthAb, mark=*] table[x=epoch, y=h02] {data/ndpg_c5_meshcmin_ndpg.dat};
      \addplot[s6best, depthBb, mark=square*] table[x=epoch, y=h012] {data/ndpg_c5_meshcmin_ndpg.dat};
      \addplot[s6best, depthCb, mark=triangle*] table[x=epoch, y=h01] {data/ndpg_c5_meshcmin_ndpg.dat};
      \addplot[s6best, depthDb, mark=diamond*] table[x=epoch, y=h008] {data/ndpg_c5_meshcmin_ndpg.dat};
      \legend{$h=0.2$,$h=0.12$,$h=0.1$,$h=0.08$}
      \addplot[dashed, depthAb, forget plot] table[x=epoch, y=h02fem] {data/ndpg_c5_meshcmin_ndpg.dat};
      \addplot[dashed, depthBb, forget plot] table[x=epoch, y=h012fem] {data/ndpg_c5_meshcmin_ndpg.dat};
      \addplot[dashed, depthCb, forget plot] table[x=epoch, y=h01fem] {data/ndpg_c5_meshcmin_ndpg.dat};
      \addplot[dashed, depthDb, forget plot] table[x=epoch, y=h008fem] {data/ndpg_c5_meshcmin_ndpg.dat};
      \nextgroupplot[xlabel={Epochs},
        xmode=normal, ymode=log, scaled x ticks=false,
        xtick distance=8000, xmin=0, xmax=26000,
        x tick label style={font=\scriptsize},
        legend style={at={(0.97,0.97)}, anchor=north east}]
      \addplot[s6best, depthAb, mark=*] table[x=epoch, y=h02] {data/ndpg_c5_meshcmin_kirchhoff.dat};
      \addplot[s6best, depthBb, mark=square*] table[x=epoch, y=h012] {data/ndpg_c5_meshcmin_kirchhoff.dat};
      \addplot[s6best, depthCb, mark=triangle*] table[x=epoch, y=h01] {data/ndpg_c5_meshcmin_kirchhoff.dat};
      \addplot[s6best, depthDb, mark=diamond*] table[x=epoch, y=h008] {data/ndpg_c5_meshcmin_kirchhoff.dat};
      \legend{$h=0.2$,$h=0.12$,$h=0.1$,$h=0.08$}
      \addplot[dashed, depthAb, forget plot] table[x=epoch, y=h02fem] {data/ndpg_c5_meshcmin_kirchhoff.dat};
      \addplot[dashed, depthBb, forget plot] table[x=epoch, y=h012fem] {data/ndpg_c5_meshcmin_kirchhoff.dat};
      \addplot[dashed, depthCb, forget plot] table[x=epoch, y=h01fem] {data/ndpg_c5_meshcmin_kirchhoff.dat};
      \addplot[dashed, depthDb, forget plot] table[x=epoch, y=h008fem] {data/ndpg_c5_meshcmin_kirchhoff.dat};
    \end{groupplot}
  \end{tikzpicture}
  \caption{The cumulative minimum~\eqref{eq:cmin} over the epochs at the
  $2\times2$ tiling, $\varkappa=5$, $k=0$, $l=5$.  Top: three training set
  sizes at $h=0.1$, against the dashed level
  $(\widehat\cR^{\,\mathrm{FEM}})^{1/2}$ of the shared trial space.  Bottom:
  four mesh sizes at $n_{\rm train}=4096$, each against the dashed level of
  its own trial space.}
  \label{fig:ndpg-c5-cmin}
\end{figure}

\section*{Acknowledgments}

This work was supported in part by NSF Grants DMS-2245077, DMS-2513112, DMS-2245097, DMS-2038080,   and by the German Research Foundation grant CRC 1481.

\bibliography{ndpg}
\bibliographystyle{siam}
\end{document}

%% file: preamble.tex
\usepackage{calc}
\usepackage[margin=2.5cm]{geometry}
\usepackage[bbgreekl]{mathbbol}
\usepackage[cal=boondox]{mathalpha}
\usepackage{hyperref}
\usepackage{enumitem}
\usepackage{amssymb}
\usepackage{pgfplots}
\usepgfplotslibrary{groupplots}
\pgfplotsset{compat=1.17}

\newtheorem{theorem}{Theorem}[section]
\newtheorem{lemma}[theorem]{Lemma}

\newtheorem{corollary}[theorem]{Corollary}
\newtheorem{proposition}[theorem]{Proposition}

\theoremstyle{definition}
\newtheorem{assumption}[theorem]{Assumption}
\newtheorem{hypothesis}[theorem]{Hypothesis}
\newtheorem{definition}[theorem]{Definition}
\newtheorem{example}[theorem]{Example}

\theoremstyle{remark}
\newtheorem{remark}[theorem]{Remark}

\newcommand{\bb}[1]{\mathbb{{#1}}}
\newcommand{\cl}[1]{\mathcal{{#1}}}
\let\div\relax
\DeclareMathOperator{\tr}{{\ensuremath{\text{tr}}}}
\newcommand{\trn}{\mathop{\mathrm{tr}_n}}
\DeclareMathOperator{\osc}{{\ensuremath{\text{osc}}}}
\DeclareMathOperator{\supp}{{\ensuremath{\text{supp}}}}
\DeclareMathOperator{\div}{{\ensuremath{\text{div}}}}

\DeclareMathOperator{\grad}{{\ensuremath{\text{grad}}}}

\DeclareMathOperator*{\argmin}{arg\,min}

\DeclareMathOperator*{\dive}{div}

\DeclareFontFamily{U}{mathx}{\hyphenchar\font45}
\DeclareFontShape{U}{mathx}{m}{n}{
<-6> mathx5 <6-7> mathx6 <7-8> matha7
<8-9> mathx8 <9-10> mathx9
<10-12> mathx10 <12-> mathx12
}{}
\DeclareSymbolFont{mathx}{U}{mathx}{m}{n}
\DeclareMathSymbol{\bigplus}{\mathop}{mathx}{"90}
\DeclareMathSymbol{\bigtimes}{\mathop}{mathx}{"91}

\newcommand{\veps}{\varepsilon}

\newcommand{\ip}[1]{\langle{#1}\rangle}
\newcommand{\fp}{\varPi^a}

\newcommand{\Aa}{\cl{A}_\alpha}

\newcommand{\Ra}[1]{\cl{R}(\alpha, {#1})}
\newcommand{\Rza}{\cl{G}_\alpha}
\newcommand{\Ja}[1]{\cl{R}_h(\alpha, {#1})}
\newcommand{\Jaa}[2]{\cl{R}_h({#1}, {#2})}

\newcommand{\Ea}[2]{\cl{e}_h^{#1}(\alpha, {#2})}

\newcommand{\clu}{{\cl{u}}}
\newcommand{\clv}{{\cl{v}}}
\newcommand{\clvh}{\clv_h}

\newcommand{\clw}{{\cl{w}}}
\newcommand{\clwh}{{\clw_h}}
\newcommand{\clz}{{\cl{z}}}
\newcommand{\clzh}{{\clz_h}}
\newcommand{\clph}{{\cl{p}_h}}

\newcommand{\Fht}{{\cl{F}_{h,\theta}}}

\newcommand{\Ua}{{\cl{U}_\alpha}}
\newcommand{\Uap}{{\cl{U}'_\alpha}}
\newcommand{\Uah}{{\cl{U}_{\alpha, h}}}

\newcommand{\Va}{{\cl{V}_\alpha}}
\newcommand{\Vap}{{\cl{V}'_\alpha}}
\newcommand{\Vah}{\cl{V}_{\alpha, h}}
\newcommand{\VK}{\cl{V}(K)}
\newcommand{\VhK}{\cl{V}_h(K)}
\newcommand{\cQ}{\cl{Q}}
\newcommand{\cR}{\cl{R}}

\newcommand{\om}{\varOmega}
\newcommand{\oh}{{\varOmega_h}}

\newcommand{\Frechet}{Fr{\'{e}}chet}
\newcommand{\RT}{RT}

\newcommand{\cdB}{c_{d}}
\newcommand{\CdB}{C_{d}}

\newcommand{\Ho}{\mathring{H}}

\newcommand{\Hdivoh}{H(\dive, \oh)}
\newcommand{\Hoh}{\Ho^{1/2}(\partial\oh)}
\newcommand{\Hmh}{H^{-1/2}(\partial\oh)}
\newcommand{\Hdiv}{H(\dive)}

\renewcommand{\Omega}{\om}